\tikzset{%
    symbol/.style={%
        ,draw=none
        ,every to/.append style={%
            edge node={node [sloped, allow upside down, auto=false]{$#1$}}}
    }
}
\title{\textbf{Boutet de Monvel operators on Lie manifolds with boundary \footnote{\textbf{email:} bohlen.karsten@math.uni-hannover.de}}}
\author{Karsten Bohlen}
\def\presuper#1#2%
\newcommand{\iso}{\xrightarrow{\sim}}	% Iso-Pfeil
\newcommand{\Nn}{\mathbb{N}}
\newcommand{\Rr}{\mathbb{R}}
\newcommand{\Cc}{\mathbb{C}}
\newcommand{\Zz}{\mathbb{Z}}
\newcommand{\B}{\mathcal{B}}
\newcommand{\Bc}{\overline{\B}}	% completed calculus
\renewcommand{\H}{\mathcal{H}} % Hilbert space
\newcommand{\C}{\mathcal{C}}	% Calkin algebra
\newcommand{\D}{\mathcal{D}}	% distributions
\newcommand{\F}{\mathcal{F}} % Fourier trafo
\newcommand{\Ff}{\F_{\mathrm{f}}}	% fiberwise FT
\newcommand{\G}{\mathcal{G}}	% groupoid
\newcommand{\Gop}{\mathrm{\G^{(0)}}} % objects
\newcommand{\Gmor}{\mathrm{\G^{(1)}}} % morphisms
\newcommand{\Gpull}{\mathrm{\G^{(2)}}}	% pullback
\newcommand{\Hop}{\mathrm{\H^{(0)}}}
\newcommand{\J}{\mathcal{J}}		% Schwartz kernel thm 
\newcommand{\K}{\mathcal{K}}	% cpt. operators
\renewcommand{\L}{\mathcal{L}} % bd operators
\newcommand{\R}{\mathcal{R}}
\newcommand{\SV}{\mathcal{S}^{\V}}	% Schwartz bundle
\newcommand{\X}{\mathcal{X}}	% blow-up
\newcommand{\Xop}{\X^t}
\newcommand{\V}{\mathcal{V}}		% vector fields
\newcommand{\W}{\mathcal{W}}		% ...
\newcommand{\A}{\mathcal{A}}		% ...
\newcommand{\N}{\mathcal{N}}		% normal bundle
\newcommand{\End}{\mathrm{End}}		% endos
\newcommand{\Hom}{\mathrm{Hom}}		% Hom bdl
\newcommand{\U}{\mathcal{U}}		% unit. elements / tub. nbhd
\renewcommand{\P}{\mathcal{P}}		% projs
\newcommand{\Diff}{\mathrm{Diff}}	% diffeos
\newcommand{\supp}{\mathrm{supp}}	% supp
\newcommand{\Green}{\mathcal{G}}  	% sing. Green operatoren
\newcommand{\Trace}{\mathscr{T}}	% Trace operators
\newcommand{\Poisson}{\mathcal{K}}	% Pot operators
\newcommand{\Pot}{\Poisson}
\newcommand{\scal}[2]{\langle #1, #2 \rangle}	% inner product
\newcommand{\ideal}[1]{\langle #1 \rangle} 	% gen. ideal / coefficient
\newcommand{\im}{\operatorname{im}} % image (of an arrow)
\newcommand{\coker}{\operatorname{coker}}	% co-kernel
\newcommand{\op}{\operatorname{op}} % symbol / quant. operator
\newcommand{\piotimes}{\hat{\otimes}} % proj. tensor product
\newcommand{\ind}{\operatorname{ind}}	% (Fredholm) Index
\newcommand{\id}{\operatorname{id}}	% Identit�t
\newcommand{\OpV}{\operatorname{Op}_{\V}}
\newcommand{\flip}{\operatorname{f}}
\newcommand{\singsupp}{\operatorname{singsupp}}
\newcommand{\diag}{\operatorname{diag}}
\newtheorem{Thm}{Theorem}[section]
\newtheorem{Lem}[Thm]{Lemma}
\newtheorem{Prop}[Thm]{Proposition}
\theoremstyle{definition}
\newtheorem{Def}[Thm]{Definition}
\newtheorem{Not}[Thm]{Notation}
\newtheorem{Rem}[Thm]{Remark}
\newtheorem*{Proof}{Proof}
\theoremstyle{remark}
\newtheorem{Exa}[Thm]{Example}
\begin{document}

\maketitle

\section*{Abstract}
% We consider general pseudodifferential boundary value problems on a Lie manifold with boundary.
We introduce and study a general pseudodifferential calculus for boundary value problems on a class of non-compact
manifolds with boundary (so-called Lie manifolds with boundary). 
This is accomplished by constructing a suitable generalization of the Boutet de Monvel calculus for boundary value problems.
The data consists of a compact manifold with corners $M$ that is endowed with a Lie structure of vector fields $2 \V$, a so-called Lie manifold.
The manifold $M$ is split into two equal parts $X_{+}$ and $X_{-}$ which intersect in an embedded hypersurface $Y \subset X_{\pm}$. 
Our goal is to describe a transmission Boutet de Monvel calculus for boundary value problems compatible with the structure of Lie manifolds.
Starting with the example of $b$-vector fields, we show that there are two groupoids integrating the Lie structures on $M$ and on $Y$,
respectively. These two groupoids form a bibundle (or a groupoid correspondence) and, under some mild assumptions, these groupoids
are Morita equivalent. 
With the help of the bibundle structure and canonically defined manifolds with corners, which are blow-ups in particular cases, we define a class of 
Boutet de Monvel type operators. 
We then define the representation homomorphism for these operators and show closedness under composition with the help of a representation theorem.
Finally, we consider appropriate Fredholm conditions and construct the parametrices for elliptic operators in the calculus.

\noindent \textbf{Keywords:} Boutet de Monvel's calculus, groupoid, Lie manifold.

% \newpage
% \thispagestyle{empty}
% \mbox{}

\newpage

% \tableofcontents

% \newpage
% \thispagestyle{empty}
% \mbox{}

\newpage

% overall structure

\section{Introduction}

\label{intro}

% The analysis on singular manifolds has a long history, and the subject is to a large degree motivated by the study of partial differential
% equations (with or without boundary conditions) and generalizations of index theory to the singular setting, e.g. Atiyah-Singer type index theorems.
% One particular approach is based on the observation first made by A. Connes (cf. \cite{C}) that groupoids are good models for singular spaces.
% The outlines for a pseudodifferential calculus on singular foliations were made precise for the general case of (longitudinally smooth) groupoids later by B. Monthubert, V. Nistor, A. Weinstein and P. Xu; see e.g. \cite{NWX}. 
% Such a calculus can be defined on manifolds with singularities of various types in a unified and general setting.
% A notion of global ellipticity with Fredholm conditions is then also possible. % \cite{nistor}
% On the other hand it is important for applications in the study of PDE's to also pose boundary conditions and a parametrix for general boundary value problems.
% It has been a hard problem to incorporate boundary conditions in the singular context. The construction of a refined parametrix, with
% appropriate Fredholm conditions, in such a setting is particularly difficult for technical reasons (the small calculus is not sufficient).
In this work we will enlarge the groupoid pseudodifferential calculus introduced in \cite{NWX} and develop a general notion of a pseudodifferential calculus for boundary 
value problems in the framework of Lie groupoids. The most natural approach seems to be along the lines of the Boutet de Monvel calculus. 
Boutet de Monvel's calculus (e.g. \cite{BM}) was established in 1971. 
This calculus provides a convenient and general framework to study the classical boundary value problems.
At the same time parametrices are contained in the calculus and it is closed under composition of elements.

\subsection{Overview}

In our case, consider the following data: a Lie manifold $(X, \V)$ with boundary $Y$, which is an embedded, transversal hypersurface $Y \subset X$ in the compact manifold
with corners $X$ and which is a Lie submanifold of $X$ (cf. \cite{ALN}, \cite{AIN}). The \emph{Lie structure} $\V \subset \Gamma(TX)$ is a Lie algebra of smooth vector fields 
such that $\V$ is a subset of the Lie algebra $\V_b$ of all vector fields tangent to the boundary strata and a finitely generated  projective $C^{\infty}(X)$-module. From $X$ we define the double $M = 2X$ at the hypersurface $Y$ which is a Lie manifold $(M, 2\V)$.
The corresponding Lie structure $2\V$ on $M$ is such that $\V = \{V_{|X} : V \in 2\V\}$.   
Transversality of $Y$ in relation to $M$ means that for each given hyperface $F \subset M$ we have
\begin{align}
T_x M &= T_x F + T_x Y, \ x \in Y \cap F. \label{transv}
\end{align}
Introduce the following notation for interior and boundary: by $\partial M$ we mean the union of all hyperfaces of the
manifold with corners $M$,
\[
M_0 := M \setminus \partial M, \ Y_0 := Y \cap M_0, \ X_0 := X \cap M_0 \ \text{and} \ \partial Y := Y \cap \partial M. 
\]
For an open hyperface $F$ in $M$ we denote by $\overline{F}$ the closure in $M$. 
Denote by $\partial_{reg} F = \partial_{reg} \overline{F} = \overline{F} \cap Y$ the \emph{regular boundary} of $F$.
The hypersurface $Y$ is endowed with a Lie structure as in \cite{AIN}:
\begin{align}
\W = \{V_{|Y} : V \in 2\V, V_{|Y} \ \text{tangent to} \ Y\}. \label{bdylie}
\end{align}

We make the following \emph{assumptions:} \emph{i)} The hypersurface $Y$ is embedded in $M$ in such a way that the boundary faces
of $Y$ are in bijective correspondence with the boundary faces of $M$. This means the map $\F(M) \ni F \mapsto F \cap Y \in \F(Y)$
should be a bijection, where $\F(M), \F(Y)$ denotes the boundary faces of $M$ and $Y$ respectively. 
\emph{ii)} Secondly, we assume that for the given Lie structure $2\V$ there is an integrating Lie groupoid $\G$ which is Hausdorff, amenable and has the local triviality property: $\G_F \cong F \times F \times G$ for any open face $F$ of $M$ (where $G$ is an isotropy Lie group).
Also $\G_{M_0} \cong M_0 \times M_0$ is the pair groupoid on the interior and $\A_{|M_0} \cong TM_0$ the tangent bundle on the interior. 

Note that the first assumption poses no loss of generality in the consideration of boundary value problems: We can always
consider a small tubular neighborhood of $Y$ and restrict the groupoid accordingly. Away from such a neighborhood there are no boundary value problems to be considered.
The second condition is needed in the parametrix construction at the end of this work. In the main body of the paper we give examples of Lie structures for which it is known that they have integrating
Lie groupoids that are Hausdorff, amenable and locally trivial. From now on we fix a Lie manifold with boundary with the notation
given above and fulfilling the previous assumptions.

The goal is to construct a Boutet de Monvel calculus for general pseudodifferential boundary value problems adapted to this data. 
We model our construction on the pseudodifferential calculus on a Lie manifold as described in \cite{ALN}. 
The authors define a representation of pseudodifferential operators on a Lie groupoid, i.e. a $\ast$-homomorphism
from equivariant operators acting on the groupoid fibers to operators acting on the original manifold with corners. 
Since the pseudodifferential calculus on a Lie groupoid is closed under composition (cf. \cite{NWX}), a representation theorem automatically yields closedness under composition
for the corresponding pseudodifferential calculus on the given Lie manifold.
% We enlarge the calculus to take boundary conditions into account by first working on the groupoid level.
In order to construct a groupoid calculus for boundary value problems we first consider the Lie algebroid $A_{2\V} \to M$ such that $\Gamma(\A_{2\V}) \cong 2\V$ using
the Serre Swan theorem. 
We fix a Lie groupoid $\G \rightrightarrows M$ fulfilling our assumptions such that $\A(\G) \cong \A_{2\V}$. 
On the boundary Lie structure on $Y$ we also obtain a groupoid $\G_{\partial} = \G_Y^Y \rightrightarrows Y$ with 
an associated Lie algebroid $\pi_{\partial} \colon \A_{\partial} \to Y$.
We define a \emph{boundary structure} in the following way: We introduce $\X = \G^Y = r^{-1}(Y)$, which is
a longitudinally smooth space and can be viewed as a desingularization of $Y \times M$ with regard to the diagonal $\Delta_Y$, suitably
embedded as manifolds with corners, as well as $\Xop = \G_Y = s^{-1}(Y)$ of $M \times Y$ with regard to $\Delta_Y$.
% The singularities at the (codimension $2$) intersection of $Y$ with the singular boundary of $M$ essentially are \emph{blown-up}
% and the boundary problem is posed on the blown-up version of the submanifolds.
There is a canonical diffeomorphism $\flip \colon \X \to \Xop$. 
Additionally, the spaces $\X$ and $\Xop$ implement a bibundle correspondence between $\G$ and $\G_{\partial}$ as defined by Hilsum and Skandalis, \cite{HS2}.   
Since the hypersurface $Y$ divides the double $M = 2X$ we denote by $X := X_{+}$ the \emph{right} half and by $X_{-}$ the \emph{left} half.
These halves have corresponding Lie structures and hence corresponding groupoids $\G^{\pm} \rightrightarrows X_{\pm}$.  
On the symbols of pseudodifferential operators from the groupoid calculus we impose a fiberwise transmission property 
with regard to the subgroupoids $\G^{+}, \G^{-} \subset \G$. The compatibility requirements we will state particularly imply that $\G^{+}, \G^{-}$ have fiberwise boundaries consisting of the fibers
$\X_{x}$ for $x \in X_{\pm}$. The Boutet de Monvel operators are defined and adapted to data given by the boundary structure, i.e. the tuple $(\G, \G_{\partial}, \G^{\pm}, \X, \Xop, \flip)$.  
The boundary structure depends on the initial Lie structure and integrability properties of the corresponding Lie algebroids.
% In this sense the closedness under composition is ultimately equivalent to the fact that the
% Lie algebroids occuring in the theory are integrable. 

\subsection{Results}

The Boutet de Monvel calculus adapted to this data is denoted by $\B^{0,0}(\G^{+}, \G_{\partial})$ (of order $0$ and type $0$) and
consists of matrices of operators, which are equivariant families of operators, such that
\[
\B^{0,0}(\G^{+}, \G_{\partial}) \subset \End \begin{pmatrix} C_c^{\infty}(\G^{+}) \\ \oplus \\ C_c^{\infty}(\G_{\partial}) \end{pmatrix}.
\]

The first objective of this work is the proof of the following result.

\begin{Thm}
Given a Lie manifold $(X, \V)$ with embedded hypersurface $Y \subset X$ yielding a Lie manifold $X$ with boundary $Y$ such that $M = 2X$, the double.
Then for a pair of associated groupoids $\G \rightrightarrows M, \ \G_{\partial} \rightrightarrows Y$ adapted to a boundary 
structure the equivariant transmission Boutet de Monvel calculus is closed under composition. 
This means that given the order $m \in \Zz$ we have
\[
\B^{m, 0}(\G^{+}, \G_{\partial}) \cdot \B^{0, 0}(\G^{+}, \G_{\partial}) \subseteq \B^{m, 0}(\G^{+}, \G_{\partial}).
\]
\label{Thm:BM}
\end{Thm}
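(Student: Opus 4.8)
The plan is to reduce the composition statement to the two classical components of a Boutet de Monvel parametrix calculus: the $\Psi$DO-part living on the doubled groupoid $\G$ and the singular Green, Poisson, and trace parts living on the bibundle $(\X, \Xop, \flip)$. The first step is to unfold the matrix product. Writing elements of $\B^{m,0}(\G^{+}, \G_{\partial})$ as $2\times 2$ matrices
\[
A = \begin{pmatrix} P_{+} + \Green & \Poisson \\ \Trace & Q \end{pmatrix},
\]
where $P$ is an equivariant $\Psi$DO on $\G$ with the fiberwise transmission property relative to $\G^{+}, \G^{-}$, $P_{+}$ denotes its truncation to $\G^{+}$, $\Green$ is a singular Green operator, $\Poisson$ a Poisson operator on $\G_{\partial}$, $\Trace$ a trace operator, and $Q$ an equivariant $\Psi$DO on $\G_{\partial}$, the product $A \cdot B$ is a matrix each of whose entries is a sum of compositions of these five types. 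I would list the nine resulting composition rules that must be checked, the crucial one being $(P_{+} + \Green)(R_{+} + \Green') = (PR)_{+} + \left( \text{leftover Green terms} \right)$, which is exactly the place where the transmission property is indispensable.

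\textbf{Key steps.}
First I would establish, fiberwise and equivariantly, the \emph{Leibniz/truncation identity} $(PR)_{+} - P_{+} R_{+} = -\,\L(P)\,\G_{0}(R)$, i.e. that the defect between the truncation of a composition and the composition of the truncations is a singular Green operator on $\G^{+}$; here one uses that $P$, $R$ have the fiberwise transmission property with respect to the fiberwise boundary $\X_{x}$ of $\G^{+}_{x}$, so the distributional kernels extend across the boundary and the usual Boutet de Monvel argument (Cauchy integral / plus--minus decomposition in the conormal variable along $\X$) applies on each fiber, uniformly in $x$. Second, I would verify that the classes of Green, Poisson, and trace operators are themselves closed under composition with equivariant $\Psi$DOs on $\G$ and $\G_{\partial}$ and with each other — e.g. $\Trace \circ \Poisson$ is an equivariant $\Psi$DO on $\G_{\partial}$, $\Poisson \circ Q$ is Poisson, $\Green \circ \Green'$ is Green — these being the standard calculus rules transported to the groupoid setting via the bibundle correspondence between $\G$ and $\G_{\partial}$ and the diffeomorphism $\flip \colon \X \to \Xop$. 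Third, I would keep track of orders: a $\Psi$DO of order $m$ composed with one of order $0$ gives order $m$; Poisson/trace/Green operators of the appropriate orders and types $0$ compose to the same, using that the second factor has order $0$ and type $0$ so no type is created. Assembling the nine entrywise identities then gives $A \cdot B \in \B^{m,0}(\G^{+}, \G_{\partial})$.

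\textbf{Main obstacle.}
The serious point is the first step — showing the truncation defect is a \emph{groupoid} singular Green operator — because it requires simultaneously (a) the fiberwise transmission property to make the plus/minus splitting well defined on each fiber $\G^{+}_{x}$ along its boundary $\X_{x}$, (b) uniformity and smoothness of this splitting across all fibers $x \in X_{+}$, so that the result is a genuine equivariant family, and (c) compatibility with the corner structure of $M$ near $\partial Y$, where the local triviality assumption $\G_{F} \cong F \times F \times G$ and the transversality condition \eqref{transv} must be invoked to see that the blow-up spaces $\X$, $\Xop$ carry the Green operator kernels as smooth conormal distributions. Everything else is a bookkeeping translation of the classical Boutet de Monvel composition rules; the representation theorem and closedness of the underlying groupoid $\Psi$DO-calculus quoted from \cite{NWX} take care of the $\Psi$DO entries, so the genuinely new content is concentrated in handling the boundary terms on the bibundle. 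I would therefore devote the bulk of the proof to a careful local model computation near $\X \subset \G^{+}$ and then globalize by a partition of unity subordinate to the face structure of $M$.
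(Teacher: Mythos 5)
Your proposal is correct and follows essentially the same route as the paper: the paper's proof of Theorem \ref{Thm:BM} reduces the composition of equivariant families to fiberwise compositions of properly supported Boutet de Monvel operators on the smooth manifolds with boundary $(\G_x^{+}, \X_x)$, handled in local charts via the support estimates and the conormal composition rules of Lemma \ref{Lem:conormal} (your ``nine rules''), with the truncation defect delegated to the classical calculus on each fiber. Your treatment is somewhat more explicit about the Leibniz/truncation identity and the uniformity of the plus/minus splitting across fibers, but the underlying strategy is identical.
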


In the next step we describe a \emph{vector-representation} of our algebra. 
Just as in the case of a pseudodifferential operator on a groupoid there is a homomorphism which maps
$\B^{0,0}(\G^{+}, \G_{\partial})$ to an algebra $\B_{\V}^{0,0}(X, Y)$. The first algebra on the left consists of equivariant families on a suitable boundary structure.
This algebra on the right hand side is defined to consist of matrices of pseudodifferential, trace, potential and singular Green operators.
These operators are extensions from the usual operator calculus on the interior manifold with boundary $(X_0, Y_0)$. 
Hence we want to define a homomorphism $\varrho_{BM}$ of algebras from
\[
\End\begin{pmatrix} C_c^{\infty}(\G^{+}) \\ \oplus \\ C_c^{\infty}(\G_{\partial}) \end{pmatrix} \supset \B^{0,0}(\G^{+}, \G_{\partial}) \to \B_{\V}^{0,0}(X, Y) \subset \End\begin{pmatrix} C^{\infty}(X) \\ \oplus \\ C^{\infty}(Y) \end{pmatrix}.
\]

The vector representation is characterized by the \emph{defining property} 
\[
\left(\varrho_{BM}(A) \circ \begin{pmatrix} r \\ r_{\partial} \end{pmatrix} \right) \begin{pmatrix} f \\ g \end{pmatrix} = A\begin{pmatrix} f \circ r \\ g \circ r_{\partial} \end{pmatrix}, \ f \in C^{\infty}(X), \ g \in C^{\infty}(Y), \ A \in \B^{0,0}(\G^{+}, \G_{\partial}) 
\]

where $r \colon \G^{+} \to M, \ r_{\partial} \colon \G_{\partial} \to Y$ denote the corresponding range maps (local diffeomorphisms).

It is a non-trivial task to prove that in certain particular cases $\varrho_{BM}$ furnishes an isomorphism between these two algebras.
Furthermore, as can already be shown by simply viewing the special case of pseudodifferential operators, it is not true in general.
Instead we prove an analog of a result due to Ammann, Lauter and Nistor (cf. \cite{ALN}). 

\begin{Thm}
Given the vector representation $\varrho_{BM}$ and a boundary structure we have the following isomorphism
\[
\varrho_{BM} \left(\B^{m,0}(\G^{+}, \G_{\partial})\right) \cong \B_{\V}^{m,0}(X, Y).
\]
\label{Thm:BM2}
\end{Thm}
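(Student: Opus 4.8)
The plan is to follow the strategy of Ammann--Lauter--Nistor (\cite{ALN}) for the scalar pseudodifferential case and adapt it to the matrix calculus on the boundary structure $(\G, \G_\partial, \G^\pm, \X, \Xop, \flip)$. First I would establish that the vector representation $\varrho_{BM}$ is well-defined: the defining property
\[
\left(\varrho_{BM}(A) \circ \begin{pmatrix} r \\ r_\partial \end{pmatrix}\right)\begin{pmatrix} f \\ g \end{pmatrix} = A\begin{pmatrix} f \circ r \\ g \circ r_\partial \end{pmatrix}
\]
uniquely determines an operator on $C^\infty(X) \oplus C^\infty(Y)$ once one checks that the right-hand side, an equivariant family, descends along the range maps $r, r_\partial$ (which are submersions, in fact local diffeomorphisms onto $M$, resp.\ $Y$, by the local triviality assumption). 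Equivariance is exactly what guarantees that $A(f\circ r, g\circ r_\partial)$ is again a pullback, so $\varrho_{BM}(A)$ exists; this reduces to the componentwise statement for the four corner types: pseudodifferential, Poisson/potential, trace, and singular Green operators. For the pseudodifferential corner, one invokes the known representation theorem for $\Psi(\G^+)$ from \cite{ALN, NWX}; the three boundary-type corners are treated by the analogous pushforward-of-Schwartz-kernel argument, using that the fibers $\G^+_x$ have boundary $\X_x$ and that $\X, \Xop$ are the desingularizations of $Y\times M$ and $M\times Y$ along $\Delta_Y$.

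Next I would prove that $\varrho_{BM}$ is \emph{injective}. This is the standard argument: if $\varrho_{BM}(A) = 0$ then $A(f\circ r, g\circ r_\partial) = 0$ for all smooth $f, g$; since $r$ is a surjective submersion, pullbacks $f \circ r$ are dense (in the appropriate topology) in the equivariant sections, and the Schwartz kernels of the four blocks of $A$ — which are conormal/polyhomogeneous distributions on the relevant desingularized products — are determined by their action on such pullbacks. Hence all kernels vanish, so $A = 0$. One must be slightly careful that the reduced kernel lives on a space on which the range map is proper enough for this density argument; here the local triviality $\G^+_F \cong F\times F\times G$ and the structure of $\X$ make this transparent.

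The substantive direction is \emph{surjectivity}: every $P \in \B_\V^{m,0}(X,Y)$ is $\varrho_{BM}(A)$ for some $A \in \B^{m,0}(\G^+, \G_\partial)$. Here one cannot argue as in the closed pseudodifferential case by a global symbol; instead, as in \cite{ALN}, one uses that $\B_\V^{m,0}(X,Y)$ is \emph{defined} as the operators on $(X_0, Y_0)$ obtained by extending the classical Boutet de Monvel calculus along the Lie structure, i.e.\ whose kernels, in a neighborhood of each face, have the prescribed conormal behaviour dictated by $\V$. Given such a $P$, I would lift its Schwartz kernel (block by block) to the groupoid: the interior behaviour gives a properly supported classical element, and the conormality at the faces is precisely matched, via the exponential-type coordinates on $\G$ furnished by $\A(\G) \cong \A_{2\V}$, to an equivariant family on $\G^+$ (resp.\ on $\X, \Xop, \G_\partial$ for the potential, trace and Green blocks). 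Equivariance of the lift is arranged by averaging/transport along the fibers using the groupoid multiplication; the transmission property is inherited because $P$ already satisfies the classical transmission condition across $Y_0$ and the fiberwise boundary of $\G^+$ is $\X$. One then checks $\varrho_{BM}(A) = P$ directly from the defining property, reducing to equality of reduced kernels.

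The main obstacle I expect is the surjectivity step, specifically matching the \emph{global} conormal/exit behaviour of a Boutet de Monvel kernel on $(X_0,Y_0)$ with an equivariant family on the groupoid \emph{simultaneously for all four operator types}, since the potential and trace operators live on the mixed spaces $\X = \G^Y$ and $\Xop = \G_Y$ rather than on $\G^+$ itself, and one must verify that the bibundle/Morita structure relating $\G$ and $\G_\partial$ makes the lift of the boundary blocks compatible with the lift of the pseudodifferential block (so that composition, already known to be internal by Theorem~\ref{Thm:BM}, is respected). Establishing this compatibility — essentially that $\flip\colon \X \to \Xop$ and the $\G$--$\G_\partial$ actions intertwine the block structure — is where the bulk of the technical work lies; once it is in place, the identity $\varrho_{BM}(A) = P$ and hence the isomorphism follow formally.
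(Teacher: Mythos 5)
Your proposal contains one genuine error and misses the structural reduction that makes the paper's proof short. The error is the injectivity step. The theorem only asserts that the image $\varrho_{BM}\left(\B^{m,0}(\G^{+}, \G_{\partial})\right)$ equals the represented calculus $\B_{\V}^{m,0}(X,Y)$, so injectivity is not needed; more importantly, your density argument for it fails. For $x$ in a singular hyperface $F$ the fiber is $\G_x \cong F \times G$ with nontrivial isotropy $G$, so $r|_{\G_x}$ is not injective and the functions $f \circ r|_{\G_x}$ only see the $G$-invariant part of $C^{\infty}(\G_x)$; they are not dense, and the Schwartz kernels over boundary fibers are \emph{not} determined by their action on pullbacks. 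The paper itself warns in the introduction that, already for pseudodifferential operators, $\varrho_{BM}$ does not furnish an isomorphism between the two algebras in general. You should delete this step rather than attempt to repair it.

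For surjectivity, which is the actual content, you propose a direct lift of the truncated kernels on $(X_0, Y_0)$ to equivariant families. The paper instead exploits that both calculi are \emph{defined} by truncation from the double: $\B_{\V}^{m,0}(X,Y) = \C \circ \B_{2\V}^{m,0}(M,Y)$ and $\B^{m,0}(\G^{+},\X) = \tilde{\C}\circ\B^{m,0}(\G,\X)$, where $\C, \tilde{\C}$ are the truncations by $\chi^{+}, \chi^{0}$. The proof is then a diagram chase: the untruncated representation theorem on the double, $\varrho_{BM}\left(\B^{m,0}(\G,\X)\right) = \B_{2\V}^{m,0}(M,Y)$ (Theorem \ref{Thm:representation}, itself deferred to \cite{ALN} and \cite{B}), plus commutativity of the square formed by $\C$, $\tilde{\C}$, $\varrho_{BM}$, $\tilde{\varrho}_{BM}$; given $B \in \B_{\V}^{m,0}(X,Y)$ one lifts to $\tilde{B} \in \B^{m,0}(\G,\X)$ and sets $A = \tilde{\C}(\tilde{B})$. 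Your route would force you to equivariantly lift operators that already carry the truncations $\chi^{+}(P+G)\chi^{0}$, $\chi^{+}K$, $T\chi^{0}$, and your sketch (``averaging/transport along the fibers'') does not explain how the lifted family is again of the form ``truncation of a family on $\G$'', which is exactly the point the commuting square settles. Either adopt the reduction to the double, or supply that missing compatibility explicitly.
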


A priori, the inverse of an invertible Boutet de Monvel operator will not be contained in our calculus due to the definition via compactly supported distributional kernels. 
We define a completion $\overline{\B}_{\V}^{-\infty,0}(X, Y)$ of the residual Boutet de Monvel operators with regard to the family of norms
of operators $\L\left(\begin{matrix} H_{\V}^t(X) \\ \oplus \\ H_{\W}^t(Y) \end{matrix}, \ \begin{matrix} H_{\V}^r(X) \\ \oplus \\ H_{\W}^r(Y) \end{matrix}\right)$ on Sobolev spaces, cf. \cite{ALNV}.
Define the completed algebra of Boutet de Monvel operators as
\[
\overline{\B}_{\V}^{0,0}(X, Y) = \B_{\V}^{0,0}(X, Y) + \overline{\B}_{\V}^{-\infty, 0}(X, Y).
\]
The resulting algebra contains inverses and has favorable algebraic properties, e.g. it is spectrally invariant, cf. section \ref{properties}. 
We obtain a parametrix construction after defining a notion of \emph{Shapiro-Lopatinski ellipticity}.
The indicial symbol $\R_{F}$ of an operator $A$ on $X$ is an operator $\R_{F}(A)$ defined as the restriction to a singular hyperface $F \subset X$ (see \cite{ALN}). 
Note that if $F$ intersects the boundary $Y$ non-trivially we obtain in this way a non-trivial Boutet de Monvel
operator $\R_F(A)$ defined on the Lie manifold $F$ with boundary $F \cap Y$. 

% add referal later

We say that an operator $A \in \overline{\B}_{\V}^{0,0}(X, Y)$ is \emph{$\V$-elliptic} if the principal symbol and the principal boundary symbol of
$A$ are both pointwise invertible.
If a $\V$-elliptic operator $A$ has pointwise invertible indicial symbols $\R_{F}(A)$ for each singular hyperface $F$
we call $A$ \emph{elliptic}. Then we prove the following result.

\begin{Thm}
\emph{i)} Let $A \in \overline{\B}_{\V}^{0,0}(X, Y)$ be $\V$-elliptic. There is a parametrix $B \in \overline{\B}_{\V}^{0,0}(X, Y)$ of $A$, in the sense
\[
I - AB \in \overline{\B}_{\V}^{-\infty, 0}(X, Y), \ I - BA \in \overline{\B}_{\V}^{-\infty, 0}(X, Y). 
\]

% cpt errors

\emph{ii)} Let $A \in \overline{\B}_{\V}^{0,0}(X, Y)$ be elliptic. There is a parametrix $B \in \overline{\B}_{\V}^{0,0}(X, Y)$ of $A$ up to 
compact operators
\[
I - AB \in \K\begin{pmatrix} L_{\V}^2(X) \\ \oplus \\ L_{\W}^2(Y) \end{pmatrix}, \ I - BA \in \K\begin{pmatrix} L_{\V}^2(X) \\ \oplus \\ L_{\W}^2(Y) \end{pmatrix}. 
\]

\label{Thm:parametrix}
\end{Thm}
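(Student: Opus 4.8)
The plan is to construct the parametrix hierarchically, following the standard three-step scheme of the classical Boutet de Monvel calculus (invert the principal symbol, then the principal boundary symbol, then the indicial/singular symbols), but carried out inside the algebra $\overline{\B}_{\V}^{0,0}(X, Y)$ using the representation isomorphism of Theorem \ref{Thm:BM2} and closedness under composition from Theorem \ref{Thm:BM}. First I would treat part \emph{i)}. Since $A$ is $\V$-elliptic, its principal symbol $\sigma(A)$ is pointwise invertible on $T^{\ast}X_0 \setminus 0$; standard symbol calculus (transferred to the groupoid level via $\varrho_{BM}$) produces a first approximation $B_0 \in \B_{\V}^{0,0}(X,Y)$ with $I - AB_0$ having vanishing principal symbol, hence of order $-1$. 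The pointwise invertibility of the principal \emph{boundary} symbol — an operator on $L^2(\Rr_{+})\oplus\Cc$ in the model cone, equivariantly in the groupoid fibers over $Y$ — lets one correct the trace/potential/singular Green entries so that the boundary symbol of the remainder also vanishes. Iterating and forming an asymptotic sum (Borel-type construction in the groupoid calculus, legitimate because $\overline{\B}_{\V}^{-\infty,0}$ is closed under such sums and the orders decrease) yields $B$ with $I - AB,\, I - BA \in \overline{\B}_{\V}^{-\infty,0}(X,Y)$. The only subtlety is that the two Neumann-type series (left and right) must be reconciled into a single two-sided parametrix, which is routine once both one-sided parametrices exist and agree modulo the residual ideal.

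For part \emph{ii)} the additional input is the invertibility of all indicial symbols $\R_F(A)$. The key structural fact, noted in the excerpt, is that $\R_F$ is multiplicative and maps $\overline{\B}_{\V}^{0,0}(X,Y)$ to a Boutet de Monvel algebra $\overline{\B}_{\W}^{0,0}(F, F\cap Y)$ on the (lower-dimensional) Lie manifold $F$ with boundary, so one can argue by induction on the depth of the manifold with corners. At the bottom of the induction (depth zero, the interior) the residual operators are genuinely smoothing in the Boutet de Monvel sense and hence compact on $L_{\V}^2(X)\oplus L_{\W}^2(Y)$ by the Sobolev mapping properties recalled from \cite{ALNV} together with the Rellich-type compact embedding for the weighted Sobolev spaces $H_{\V}^t$. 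For the inductive step: having the $\V$-parametrix $B$ from part \emph{i)}, the remainder $R = I - AB$ lies in $\overline{\B}_{\V}^{-\infty,0}(X,Y)$, and one must show $R$ is compact iff each $\R_F(R)$ vanishes; equivalently that an element of the residual algebra is compact precisely when it is in the kernel of the joint indicial map. This is where the local triviality assumption $\G_F \cong F\times F\times G$ and amenability of $\G$ enter — they identify the residual algebra modulo compacts with a product of indicial algebras, so invertibility of $\R_F(A)$ for all $F$ lets one correct $B$ to $B'$ with $I-AB', I-B'A$ compact.

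The main obstacle I expect is the compactness criterion in the residual algebra: proving that $R \in \overline{\B}_{\V}^{-\infty,0}(X,Y)$ is compact on $L_{\V}^2(X)\oplus L_{\W}^2(Y)$ exactly when all its indicial symbols $\R_F(R)$ vanish. This requires a careful description of the closure $\overline{\B}_{\V}^{-\infty,0}$ — one needs that the joint indicial symbol map extends continuously to it with kernel equal to the genuinely compact (smoothing-in-all-directions) operators, which is the Boutet de Monvel analogue of the characterization of compact operators in comparison $C^{\ast}$-algebras of groupoids. Concretely one would: (a) use the groupoid $\Cstar$-algebra picture, where $\overline{\B}_{\V}^{-\infty,0}$ completes to an ideal whose quotient by $\K$ is assembled from the indicial $\Cstar$-algebras of the hyperfaces; (b) invoke amenability to pass between reduced and full $\Cstar$-algebras and to get a well-behaved (exact) composition series; (c) deduce that a residual operator is compact iff it maps to zero in each successive quotient, i.e. iff all $\R_F$ vanish. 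Everything else — the symbolic parametrix construction, the asymptotic summation, the reconciliation of left and right parametrices, the Sobolev continuity estimates — is a transcription of the classical arguments into the equivariant/groupoid setting, made legitimate by Theorems \ref{Thm:BM} and \ref{Thm:BM2}.
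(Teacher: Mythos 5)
Your overall strategy for part \emph{i)} matches the paper's (symbolic inversion, Neumann series, asymptotic summation, reconciliation of the one-sided parametrices via the standard $AB_2AB_1$ manipulation), but you have passed over the step that is the actual technical heart of the paper's proof: the claim that pointwise invertibility of the principal boundary symbol ``lets one correct the trace/potential/singular Green entries'' is not automatic. What must be shown is that $\sigma_{\partial}(A)^{-1}$ is again the boundary symbol of an element \emph{of the calculus}, i.e.\ of the form $\begin{pmatrix} \op_n^{+} q + \op_n g & \op_n k \\ \op_n t & s \end{pmatrix}$ with $g$ a genuine Green symbol and $k,t,s$ of potential/trace/pseudodifferential type. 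The paper devotes parts \emph{I)}--\emph{III)} of its proof to exactly this: one first shows the Fredholm family $\op_n^{+}\sigma_P$ over $S^{\ast}\A_{\partial}$ has index element pulled back from $K(Y)$, then uses this (Boutet de Monvel's index-bundle argument, following Rempel--Schulze) to produce a Green symbol $g_1$ with $\ker(\op_n^{+}q+\op_n g_1)\cong p^{\ast}J_{+}$ and $\coker(\op_n^{+}q+\op_n g_1)\cong p^{\ast}J_{-}$, and finally verifies smoothness of the resulting inverse in the parameter $\varrho\in S^{\ast}\A_{\partial}$. Without some version of this argument the closure of the symbol algebra under inversion — which is what feeds the exact sequence of Theorem \ref{Thm:exact} to produce $B$ with $(\sigma\oplus\sigma_{\partial})(B)=(\sigma\oplus\sigma_{\partial})(A)^{-1}$ — is unproven, and the whole construction of part \emph{i)} does not get off the ground.

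For part \emph{ii)} you correctly isolate the crux (compactness of a residual operator iff all indicial symbols vanish), but your proposed route via induction on the depth of the corner structure and a $\Cstar$-algebraic composition series differs from what the paper does, and is arguably heavier than necessary. The paper argues directly: in one direction, vanishing of all $\R_F(A)$ gives $A=\rho B$ with $\rho=\prod_F\rho_F$, and the generalized Kondrachov theorem ($\rho H_{\V}^1(X)\hookrightarrow L_{\V}^2(X)$ compact) yields compactness; in the other direction, a contradiction argument using the Hausdorff property of $\G$ and continuity of $x\mapsto\|A_x(\cdot)\|$ shows a compact operator must have vanishing indicial symbols. The Fredholm statement then follows from injectivity of the vector representation (amenability) and the identification of the essential spectrum with the union of the spectra of the three symbols. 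Your plan (a)--(c) would presumably reach the same conclusion, but you should be aware that the paper never sets up the full composition series you describe, and the induction on depth is not needed.
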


An application of groupoids to boundary value problems can be found in \cite{CY}. We also refer to recent work of Debord and
Skandalis where the Boutet de Monvel algebra is being studied using deformation groupoids \cite{DS}. 
It would be interesting to explore applications of the calculus to cases of manifolds with piecewise smooth boundaries previously considered in the literature.
One example are Lipschitz boundaries, cf. \cite{JK}, \cite{MT}, \cite{MMT}. 
On the other hand potential applications to the $\overline{\partial}$-problem could be explored, see \cite{F}.

% needs work:
The paper is organized as follows. In Section 2 we recall the definition of Boutet de Monvel's calculus in the standard case and 
study an example for our construction. 
In Section 3 we briefly summarize the definitions and notation for Lie groupoids, groupoid actions and Lie algebroids.
There we also introduce operators defined via their Schwartz kernels and discuss reduced kernels.
Section 4 is concerned with the notion of boundary structure. We motivate the definition by considering known examples of Lie structures with corresponding integrating Lie groupoids.
We prove that under our assumptions such a boundary structure or tuple exists.
In Section 5 we introduce the extended operators of Boutet de Monvel type which are special instances of the operators defined in Section 3.
Then we show how to compose these operators in Section 6. 
Section 7 is concerned with the definition of the Boutet de Monvel calculus with regard to a given boundary structure. We prove the closedness under composition and the representation theorem.
In Section 8 we describe several properties of the calculus, including order reductions and continuity on Sobolev spaces.
Section 9 is concerned with the construction of parametrices in the Lie calculus. 

\subsection*{Acknowledgements}

First and foremost I thank my advisor Elmar Schrohe for his invaluable support during my work on this project.
Part of the research was completed while the author was visiting the University of Metz Lorraine, UFR Math\'ematiques.
I'm grateful to Victor Nistor for his friendly hospitality and many helpful discussions.
I thank Georges Skandalis for helpful discussions during my visit to Universit\'e Paris Diderot.
Additionally, Magnus Goffeng and Julie Rowlett made several suggestions which led to an improvement of the exposition.
This research is for the most part based on by my PhD thesis and was conducted while I was a member of the Graduiertenkolleg GRK 1463 at Leibniz University of Hannover.
I thank the Deutsche Forschungsgemeinschaft (DFG) for their financial support.

\section{Boutet de Monvel's calculus}

\subsection{Properties}

The calculus of Boutet de Monvel contains the classical boundary value problems as well as their inverse if it exists and parametrices.
This calculus was developed in 1971, see \cite{BM}.
Let for the moment $X$ be a smooth compact manifold with boundary denoted $\partial X = Y$.
Let $P$ be a pseudodifferential operator on a smooth neighborhood $M$ of the manifold with boundary.
We denote by $P_{+} = \chi^{+} P \chi^{0}$ the operator obtained from $P$ via extension by zero $\chi^{0}$ of functions defined on $X$ to functions on $M$ and restriction $\chi^{+}$ to $X$. 
The pseudodifferential operator $P$ should fulfill the transmission property, i.e. $P_{+}$ should map functions smooth up to the boundary to functions again smooth up to the boundary (\cite{G}, p. 23, (1.2.6)).
Furthermore, $G \colon C^{\infty}(X) \to C^{\infty}(X)$ is a singular Green operator (\cite{G}, p. 30), $K \colon C^{\infty}(\partial X) \to C^{\infty}(X)$ is a potential operator (\cite{G}, p. 29) and $T \colon C^{\infty}(X) \to C^{\infty}(\partial X)$ is a trace operator (\cite{G}, p. 27).
Additionally, $S$ is a pseudodifferential operator on the boundary.
Elements of the calculus consist of matrices of operators
\begin{align}
A &= \begin{pmatrix} P_{+} + G & K \\
T & S \end{pmatrix} \colon \begin{matrix} C^{\infty}(X, E_1) \\ \oplus \\ C^{\infty}(\partial X, F_1) \end{matrix} \to \begin{matrix} C^{\infty}(X, E_2) \\ \oplus \\ C^{\infty}(\partial X, F_2) \end{matrix} \in \B^{m,d}(X, \partial X) \label{BM}
\end{align}

The calculus has the following notable properties.
\begin{itemize}
\item If the bundles match, i.e. $E_1 = E_2 = E, \ F_1 = F_2 = F$ the calculus is closed under composition. 

\item If $F_1 = 0, \ G = 0$ and $K, \ S$ are not present, the classical BVP's are contained in the calculus, e.g. Dirichlet problem.

\item If $F_2 = 0$ and $T, \ S$ are not present, it contains inverses of classical BVP's if they exist. 
\end{itemize}

It is non-trivial to establish the closedness under composition, which means that two Boutet de Monvel operators composed
are again of this type, compare the standard references \cite{G}, \cite{RS}. 
%Additionally, the symbolic structure of the operators involves much more complicated behaviour than that of merely pseudodifferential operators.

\subsection{An elliptic boundary value problem}

In this section we will study a Shapiro Lopatinski elliptic boundary value problem adapted to a Lie manifold with boundary.
Specifically, we consider the appropriate generalization of the problem discussed in Section 2 of \cite{K}. 
We therefore fix the data in the introduction, a Lie manifold $(X, \V)$ with boundary $(Y, \W)$ and double $(M, 2\V)$. 

Let smooth hermitian vector bundles $\tilde{E}, \tilde{F} \to M$, restrictions $E, F \to X$ as well as $F_j \to M$ for $j = 1, \cdots, L$ be given.
We fix the differential operators on the double Lie manifold $B_j \in \Diff_{2 \V}^{m_i}(M; \tilde{E}, \tilde{F})$ for $j = 1, \cdots, L$ and $P \in \Diff_{2\V}^m(M, \tilde{E}, \tilde{F})$.
Set $T = (\gamma_Y \circ (B_1)_{+}, \cdots, \gamma_Y \circ (B_L)_{+})^t$ for the \emph{trace operator} and let $d := \max_{j=1}^L m_j + 1$. 

We consider the following boundary value problem 
\begin{align*}
P_{+} u = f \ \text{in} \ X, \\
T u = g \ \text{on} \ Y.
\end{align*}

The problem $\A = \begin{pmatrix} P_{+} \\ T \end{pmatrix}$ in particular yields a continuous linear operator on the appropriate Sobolev spaces
\[
\A = \begin{pmatrix} P_{+} \\ T \end{pmatrix} \colon H_{\V}^{s}(X, E) \to \begin{matrix} H_{\V}^{s-m}(X, F) \\ \oplus \\ \oplus_{j=1}^L H_{\W}^{s - m_j - \frac{1}{2}}(Y, F_{j|Y}) \end{matrix}.
\]

\begin{Thm}
Assume the boundary value problem $\A = \begin{pmatrix} P_{+} \\ T \end{pmatrix}$ is Shapiro-Lopatinski elliptic (in the sense of Definition \ref{Def:SL} in section \ref{parametrices}),
then for $s > \max\{m, d\} - \frac{1}{2}$ 
\[
\A = \begin{pmatrix} P_{+} \\ T \end{pmatrix} \colon H_{\V}^{s}(X, E) \to \begin{matrix} H_{\V}^{s-m}(X, F) \\ \oplus \\ \oplus_{j=1}^L H_{\W}^{s - m_j - \frac{1}{2}}(Y, F_{j|Y}) \end{matrix}.
\]

is Fredholm.

There is a parametrix $\P = \begin{pmatrix} Q_{+} + G & K_1 & \cdots & K_L \end{pmatrix}$ of $\A$ in the completed Lie calculus

$\overline{\B}_{\V}^{-m, \max\{0, d -m\}}(X, Y)$ (cf. section \ref{properties}).

Here $G$ is a singular Green operator of order $-m$ and $K_j$ are potential operators of order $-m_j - \frac{1}{2}$. 
\label{Thm:exa}
\end{Thm}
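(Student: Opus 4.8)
The plan is to read the statement off from the general theory established above, once $\A$ is recognised as an elliptic element of the transmission Boutet de Monvel calculus and fed into Theorem~\ref{Thm:parametrix}. First I would check membership. A $2\V$-differential operator on $M$ has the transmission property across the embedded hypersurface $Y$, so the truncations $P_{+}=\chi^{+}P\chi^{0}$ and $(B_{j})_{+}=\chi^{+}B_{j}\chi^{0}$ are truncated $\V$-pseudodifferential operators on $X$, and each $\gamma_{Y}\circ(B_{j})_{+}$ is a $\V$-trace operator of order $m_{j}+\tfrac12$ and type $m_{j}+1\le d$. Hence $\A=\begin{pmatrix}P_{+}\\ T\end{pmatrix}$ is a Boutet de Monvel operator on $(X,Y)$ of type $d=\max_{j}m_{j}+1$, with target rows of orders $m$ and $m_{j}+\tfrac12$; since $\B_{\V}$ and its completion are closed under composition (Theorems~\ref{Thm:BM} and \ref{Thm:BM2}, Section~\ref{properties}), every composition carried out below remains inside the calculus.

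Next I would reduce to order and type $(0,0)$. Using the order-reducing operators of Section~\ref{properties} — invertible elliptic elements of $\Bc_{\V}^{\ast,0}$ on $X$ and on $Y$ whose inverses again lie in the completed calculus — together with the classical type reduction, I would produce $\A_{0}\in\Bc_{\V}^{0,0}(X,Y)$ that differs from $\A$ by invertible factors. Since the principal symbol, the principal boundary symbol and each indicial symbol $\R_{F}$ are multiplicative and are invertible on those factors, $\A_{0}$ is elliptic in the sense preceding Theorem~\ref{Thm:parametrix} exactly when $\A$ satisfies Definition~\ref{Def:SL}: the principal-symbol part is the $\V$-ellipticity of $P$; the principal boundary symbol of $\A_{0}$ is, for each nonzero covector, the model half-line boundary value problem built from the principal symbols of $P$ and of the $B_{j}$, and its bijectivity on Schwartz functions on $\overline{\Rr}_{+}$ (valued in the fibres) is exactly the Shapiro--Lopatinski condition; and the invertibility of the indicial symbols $\R_{F}(\A_{0})$ at the singular hyperfaces is part of Definition~\ref{Def:SL} — on a hyperface $F$ meeting $Y$, $\R_{F}(\A)$ is itself an elliptic Boutet de Monvel operator on the Lie manifold $F$ with boundary $F\cap Y$, so the requirement is coherently recursive.

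Then I would apply Theorem~\ref{Thm:parametrix}(ii) to $\A_{0}$, obtaining $B_{0}\in\Bc_{\V}^{0,0}(X,Y)$ with $I-\A_{0}B_{0}$ and $I-B_{0}\A_{0}$ compact on $L_{\V}^{2}(X)\oplus L_{\W}^{2}(Y)$. Undoing the reductions yields $\P:=\Lambda'\,B_{0}\,\Lambda''\in\Bc_{\V}^{-m,\max\{0,d-m\}}(X,Y)$, with $\Lambda',\Lambda''$ assembled from the order-reducing operators; reading off the block structure of $B_{0}$ gives $\P=\begin{pmatrix}Q_{+}+G& K_{1}&\cdots&K_{L}\end{pmatrix}$ with $Q$ a parametrix of $P$ on $M$, $G$ a singular Green operator of order $-m$, and $K_{j}$ potential operators of order $-m_{j}-\tfrac12$. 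Because every operator in the residual completion $\Bc_{\V}^{-\infty,0}$ is compact between the relevant $\V$- and $\W$-Sobolev spaces (Section~\ref{properties}), the remainders $I-\A\P$ and $I-\P\A$ are compact on $H_{\V}^{s-m}(X,F)\oplus\bigoplus_{j=1}^{L}H_{\W}^{s-m_{j}-1/2}(Y,F_{j|Y})$ and on $H_{\V}^{s}(X,E)$ respectively, so $\A$ is Fredholm there; the range $s>\max\{m,d\}-\tfrac12$ is precisely what makes the traces $\gamma_{Y}(B_{j})_{+}u$ defined on $H_{\V}^{s}(X,E)$ and the type-$\max\{0,d-m\}$ parametrix $\P$ act on $H_{\V}^{s-m}(X,F)$.

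The step I expect to be delicate is this reduction: organising order and type reductions for the rectangular system $\A$ — whose target rows carry the differing orders $m$ and $m_{j}+\tfrac12$ and whose type is $d$ — so that the reduced operator genuinely lands in $\Bc_{\V}^{0,0}$, and verifying that Definition~\ref{Def:SL}, which constrains only the boundary and indicial symbols, is preserved under these reductions and matches the ellipticity hypothesis of Theorem~\ref{Thm:parametrix}(ii). Tracking the resulting order $-m$, type $\max\{0,d-m\}$ and potential orders $-m_{j}-\tfrac12$ is then routine bookkeeping.
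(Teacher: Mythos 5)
Your proposal is correct and follows essentially the same route as the paper: compose $\A$ with order reductions so that the resulting system is an elliptic element of the completed calculus, use multiplicativity of the principal and boundary symbols (Lemma \ref{Lem:mult}) to transfer Shapiro--Lopatinski ellipticity, apply Theorem \ref{Thm:parametrix}, and undo the reductions to read off $\P$. The only difference is cosmetic: the paper reduces just the boundary rows to the common order $m$ via a diagonal matrix of order reductions on $Y$ (leaving an operator in $\Bc_{\V}^{m,d}(X,Y)$ and invoking Theorem \ref{Thm:parametrix} there), whereas you reduce all the way to order and type $(0,0)$ --- which is what a literal application of that theorem requires, so your extra step only makes explicit what the paper leaves implicit.
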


\begin{proof}
We fix the pseudodifferential order reductions (cf. section \ref{properties}, \cite{ALNV}, \cite{N})
\[
R_j \colon H_{\W}^s(Y, F_{j|Y}) \iso H_{\W}^{s - \mu_j}(Y, F_{j|Y})
\]

in the completed pseudodifferential calculus $\overline{\Psi}_{\W}^{m_j}(Y)$, $\mu_j = m - m_j - \frac{1}{2}, \ j = 1, \cdots, L$. 

Set
\[
R := \begin{pmatrix} R_1 & 0 & 0 \\
\vdots & \ddots & \vdots \\
0 & 0 & R_L \end{pmatrix}
\]

and consider for $s > \max\{m,d\} - \frac{1}{2}$ the operator
\[
\B = \begin{pmatrix} 1 & 0 \\
0 & R \end{pmatrix} \begin{pmatrix} P_{+} \\ T \end{pmatrix} \colon H_{\V}^s(X, E) \to \begin{matrix} H_{\V}^{s-m}(X, F) \\ \oplus \\ H_{\W}^{s-m}(Y, J_{+}) \end{matrix}
\]

where $J_{+} = \oplus_{j=1}^L F_{j|Y}$. 

We obtain an elliptic pseudodifferential boundary value problem contained in the completed calculus $\B \in \Bc_{\V}^{m,d}(X, Y)$. 
The ellipticity follows by the multiplicativity of the principal and principal boundary symbol in Lemma \ref{Lem:mult}. 
By Theorem \ref{Thm:parametrix} there is a parametrix $\tilde{\P} \in \Bc_{\V}^{-m, (d - m)_{+}}(X, Y)$ 
up to residual terms. 
Hence $\P = \tilde{\P} \begin{pmatrix} 1 & 0 \\
0 & R \end{pmatrix}$ yields a parametrix of $\A$.
The Fredholm property of $\P$ follows by another application of Theorem \ref{Thm:parametrix}, part (b). 
\end{proof}

\section{Groupoids, actions and algebroids}

\subsection{Lie groupoids}

% definition: Lie groupoid
% Haar systems, invariance

\begin{Def}
Groupoids are small categories in which every morphism is invertible. 
\label{Def:grpd}
\end{Def}

First we will introduce and fix the notation for the rest of this paper. 
Then we will give the definition of a Lie groupoid. 
For more details on groupoids we refer the reader for example to the book \cite{PAT}.

\begin{Not}
A groupoid will be denoted $\G \rightrightarrows \Gop$. We denote by $\Gmor$ the set of morphisms and by $\Gop$ the set
of objects. By a common abuse of notation we write $\G$ for $\Gmor$. We have the range / source maps $r, \ s \colon \G \to \Gop$ such that $\gamma \in \G$ is 
\[
\gamma \colon s(\gamma) \to r(\gamma).
\]

The set of composable arrows is given as pullback
\[
\Gpull := \Gmor \times_{\Gop} \Gmor = \{(\gamma, \eta) \in \G \times \G : r(\eta) = s(\gamma)\}.
\]

Also denote the inversion
\[
i \colon \G \to \G, \ \gamma \mapsto \gamma^{-1}
\]

and unit map
\[
u \colon \Gop \to \G, \ x \mapsto u(x) = \id_x \in \G.
\]

Multiplication is denoted by
\[
m \colon \Gpull \to \G, \ (\gamma, \eta) \mapsto \gamma \cdot \eta.
\]

We also set 
\[
\G_x := s^{-1}(x), \ \G^x := r^{-1}(x), \ \G_x^x := \G_x \cap \G^x
\]

for the $r$ and $s$ fibers and their intersection $\G_x^x$. The latter is easily checked to be a group for each $x \in \Gop$.

\label{Not:grpd}
\end{Not}

\textbf{Axioms:} One can summarize the maps in a sequence 
\[
\xymatrix{
\Gpull \ar@{->>}[r]^{m} & \G \ar@{>->>}[r]^{i} & \G \ar@{->>}@< 2pt>[r]^-{r,s} \ar@{->>}@<-2pt>[r]^{} & \Gop \ar@{>->}[r]^{u} & \G.
}
\]

With the above notation we can give an alternative way of defining groupoids axiomatically as follows. 

\emph{(i)} $(s \circ u)_{|\Gop} = (r \circ u)_{|\Gop} = \id_{\Gop}$.

\emph{(ii)} For each $\gamma \in \G$
\[
(u \circ r)(\gamma) \cdot \gamma = \gamma, \ \gamma \cdot (u \circ s)(\gamma) = \gamma.
\]

\emph{(iii)} $s \circ i = r, \ r \circ i = s$. 

\emph{(iv)} For $(\gamma, \eta) \in \Gpull$ we have
\[
r(\gamma \cdot \eta) = r(\gamma), \ s(\gamma \cdot \eta) = s(\eta).
\]

\emph{(v)} For $(\gamma_1, \gamma_2), \ (\gamma_2, \gamma_3) \in \Gpull$ we have
\[
(\gamma_1 \cdot \gamma_2) \cdot \gamma_3 = \gamma_1 \cdot (\gamma_2 \cdot \gamma_3).
\]

\emph{(vi)} For each $\gamma \in \G$ we have
\[
\gamma^{-1} \cdot \gamma = \id_{s(\gamma)}, \ \gamma \cdot \gamma^{-1} = \id_{r(\gamma)}. 
\]

\begin{Def}
The $7$-tuple $(\Gop, \Gmor, r,s,m,u,i)$ defines a \emph{Lie groupoid} if and only if $\G \rightrightarrows \Gop$ is a groupoid, $M := \Gop, \ \Gmor$ are $C^{\infty}$-manifolds (with corners),
all the maps are $C^{\infty}$ and $s$ is a submersion.
\label{Def:Liegrpd}
\end{Def}

\begin{Rem}
We notice that $r$ is automatically a submersion due to the axiom \emph{iii)}. 
Hence the pullback
\[
\xymatrix{
\Gpull \ar[d]_{p_2} \ar[r]^{p_1} & \G \ar[d]_{s} \\
\G \ar[r]^{r} & M
}
\]

exists in the $C^{\infty}$-category if $\G$ is $C^{\infty}$ and thus $\Gpull$ is a smooth manifold as well.
\label{Rem:Liegrpd}
\end{Rem}

\subsection{Groupoid actions}

% fibered space \X, G acts on \X
% axioms for groupoid actions
% definition of \X-operators
% invariance condition
% Haar systems on \X? (cf. PAT2)
% reduced kernel

Given a Lie groupoid $\G \rightrightarrows M$ we introduce spaces $\X$ which are fibred over $\Gop$ and such that
$\G$ acts on $\X$. 
This notion as well as some of the notation is adapted from the paper \cite{PAT2}.

\begin{Def}
Let $(\X, q)$ be a $\G$-space, i.e. $q \colon \X \to M$ is a smooth map and $\X$ is a smooth manifold.
Set $\X \ast \G := \X \times_M \G = \{(z, \gamma) \in \X \times \G : q(z) = r(\gamma)\}$ to be the \emph{composable elements}.
We say that $\G$ \emph{acts on} $\X$ from the right if the following conditions hold:

\emph{i)} For each $(z, \gamma) \in \X \ast \G$
\[
q(z \cdot \gamma) = s(\gamma).
\]

\emph{ii)} For each $(z, \gamma) \in \X \ast \G$ and $(\gamma, \eta) \in \Gpull$
\[
z \cdot (\gamma \cdot \eta) = (z \cdot \gamma) \cdot \eta.
\]

\emph{iii)} For each $(z, \gamma) \in \X \ast \G$ we have
\[
(z \cdot \gamma) \cdot \gamma^{-1} = z.
\]

A left action of $\G$ on a $\G$-space $(\X, p)$ is a right-action in the opposite category $\G^{\mathrm{op}}$. 
\label{Def:actions}
\end{Def}
% 
% \begin{Not}
% Given two Lie groupoids $\G \rightrightarrows M, \ \H \rightrightarrows N$ let $(\X, p)$ be an $\H$-space and
% $(\X, q)$ be a $\G$-space.
% 
% 
% A left action is denoted by
% 
% \begin{center}
% \includegraphics[width=0.15\textwidth]{ACT9.png}
% \end{center}
% % \[
% % \begin{tikzcd}[every label/.append style={swap}]
% % \H \arrow[symbol=\circlearrowleft]{r} & \arrow{d}{p} \X \\
% % & N
% % \end{tikzcd}
% % \]
% 
% and a right action by
% \begin{center}
% \includegraphics[width=0.15\textwidth]{ACT8.png}
% \end{center}
% 
% % \[
% % \begin{tikzcd}[every label/.append style={swap}]
% % \X \arrow[symbol=\circlearrowright]{r} \arrow{d}{q} & \G \\
% % M &  
% % \end{tikzcd}
% % \]
% 
% Additionally, we fix the following notation for the fibers: $\X^y := p^{-1}(y), \ \X_x := q^{-1}(x), \ y \in N, \ x \in M$.
% \label{Not:actions}
% \end{Not}

\begin{Rem}
Note that for any $\H$-space $(\X, p)$ which is additionally \emph{fibered} meaning $p$ is a surjective submersion then the pullback $\H \ast \X$ exists in the $C^{\infty}$-category if $\H, \ \X$ are $C^{\infty}$. Analogously for a fibered $\G$-space.
\label{Rem:actions}
\end{Rem}

Consider the following actions of two Lie groupoids $\G, \H$:
\[
\begin{tikzcd}[every label/.append style={swap}]
\H \arrow[symbol=\circlearrowleft]{r} & \arrow{dl}{p} \X \arrow[symbol=\circlearrowright]{r} \arrow{dr}{q} & \G \\
\Hop & & \Gop 
\end{tikzcd}
\]

% \[
% \begin{tikzcd}[every label/.append style={swap}]
% \H \arrow[symbol=\circlearrowleft]{r} & \arrow{dl}{p} \X \arrow[symbol=\circlearrowright]{r} \arrow{dr}{q} & \G \\
% \Hop & & \Gop 
% \end{tikzcd}
% \]

We can define a so-called left Haar system on $\X$ induced by the action of $\H$ 
and analogously a right Haar system induced by the action of $\G$. 
This enables us to define left- and right-operators coming from the actions.

Let $\{\lambda_x\}_{x \in \Gop}$ be a Haar system induced on $\X$ by the right action of $\G$, see also \cite{PAT2}, p. 6. 
This is a family of measures such that
\begin{itemize}
\item The support is $\supp \ \lambda_x = \X_x$ for each $x \in \Gop$. 

\item The map
\[
\Gop \ni x \mapsto \int_{\X_x} f \,d\lambda_x 
\]

is $C^{\infty}$.

 \item We have the invariance condition
\begin{align}
& \int_{\X_{r(\gamma)}} f(z \cdot \gamma) \,d\lambda_{r(\gamma)}(z) = \int_{\X_{s(\gamma)}} f(w) \,d\lambda_{s(\gamma)}(w). \label{Hinv}
\end{align}
\end{itemize}

% first Haar system, then $C^{\infty}$-functions (otherwise embarrasing)
% then fix a Haar system on \X and \G for later use

Fix the \emph{right-multiplication} for given $\gamma \in \G$ 
\[
r_{\gamma} \colon \X_{r(\gamma)} \to \X_{s(\gamma)}, \ z \mapsto z \cdot \gamma.
\]

This is a diffeomorphism. % verify

The induced operators acting on $C^{\infty}$-functions are given by 
\[
R_{\gamma} \colon C_c^{\infty}(\X_{s(\gamma)}) \to C_c^{\infty}(\X_{r(\gamma)}), \ (R_{\gamma} f)(z) := f(z \cdot \gamma), \ z \in \X.
\]

These operators $\R_{\gamma}$ yield $\ast$-(anti-)homomorphisms since $(R_{\gamma})^{-1} = R_{\gamma^{-1}}$ is the inverse and $R_{\gamma \cdot \eta} = R_{\eta} \circ R_{\gamma}, \ (\gamma, \eta) \in \Gpull$.

% unlike L_{\gamma \eta} = L_{\gamma} L_{\eta}
% homomorphism of functions [R_{\gamma}(f \ast g)](\eta) = (R_{\gamma} f) \ast (R_{\gamma} g)
% define \ast product(s) on C_c^{\infty}(\X)

\begin{Def}
\emph{i)} A continuous linear operator $T \colon C_c^{\infty}(\G) \to C_c^{\infty}(\X)$ is called a \emph{right $\X$-operator} if and only if
$T = (T_x)_{x \in \Gop}$ is a family of continuous linear operators $T_x \colon C_c^{\infty}(\G_x) \to C_c^{\infty}(\X_x)$ such that
\begin{align}
R_{\gamma^{-1}} T_{r(\gamma)} R_{\gamma} = T_{s(\gamma)}, \ \gamma \in \G \label{inv}.
\end{align}

This can be expressed alternatively by requiring the following diagram to commute for each $\gamma \in \G$ 
\[
\xymatrix{ 
C_c^{\infty}(\G_{s(\gamma)}) \ar[r]^{T_{s(\gamma)}} & C_c^{\infty}(\X_{s(\gamma)}) \ar[d]_{R_{\gamma}} \\
C_c^{\infty}(\G_{r(\gamma)}) \ar[r]^{T_{r(\gamma)}} \ar[u]_{R_{\gamma^{-1}}} & C_c^{\infty}(\X_{r(\gamma)}).
}
\]

\emph{ii)} By analogy $\tilde{T} \colon C_c^{\infty}(\X) \to C_c^{\infty}(\H)$ is a \emph{left $\X$-operator} if and only if $\tilde{T} = (\tilde{T}^y)_{y \in \Hop}$ is a family 
of continuous linear operators $\tilde{T}^y \colon C_c^{\infty}(\X^y) \to C_c^{\infty}(\H^y)$ such that the diagram
\[
\xymatrix{ 
C_c^{\infty}(\X^{s(\gamma)}) \ar[r]^{\tilde{T}^{s(\gamma)}} & C_c^{\infty}(\H_{s(\gamma)}) \ar[d]_{L_{\gamma^{-1}}} \\
C_c^{\infty}(\X^{r(\gamma)}) \ar[u]_{L_{\gamma}} \ar[r]^{\tilde{T}^{r(\gamma)}} & C_c^{\infty}(\H_{r(\gamma)})
}
\]

commutes for each $\gamma \in \H$ where $L_{\gamma}$ denotes in this case the corresponding left multiplication. 

\label{Xop}
\end{Def}

The next Proposition tells us that the family of Schwartz kernels $(k_x)_{x \in \Gop}$ for a given $\X$-operator can be replaced by a 
so-called reduced kernel. 
This is not unlike the situation for groupoids and the pseudodifferential calculus where reduced kernels are used extensively (cf. \cite{NWX}).

\begin{Prop}
Given a right-$\X$-operator $T \colon C_c^{\infty}(\G) \to C_c^{\infty}(\X)$. 
Then for $u \in C_c^{\infty}(\G), \ z \in \X$ 
\[
(Tu)(z) = \int_{\G_{q(z)}} k_T(z \cdot \gamma^{-1}) u(\gamma) \,d\mu_{q(z)}(\gamma)
\]

with $k_T(z \cdot \gamma^{-1}) := k_{r(\gamma)}(z, \gamma)$ depending only on $z \cdot \gamma^{-1} \in \X$ for each $(z, \gamma^{-1}) \in \X \ast \G$. 
\label{Prop:reduced}
\end{Prop}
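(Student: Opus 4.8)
The plan is to reduce the Schwartz kernel family $(k_x)_{x \in \Gop}$ of the right-$\X$-operator $T$ to a single function on $\X$ by exploiting the equivariance relation \eqref{inv}, in exact analogy with the reduction of kernels for equivariant operators on a groupoid in \cite{NWX}. First I would use the Schwartz kernel theorem fiberwise: since each $T_x \colon C_c^{\infty}(\G_x) \to C_c^{\infty}(\X_x)$ is continuous and linear, it is represented by a distributional kernel $k_x \in \mathcal{D}'(\X_x \times \G_x)$, so that $(T_x u)(z) = \int_{\G_x} k_x(z, \gamma)\, u(\gamma)\, d\mu_x(\gamma)$ for $z \in \X_x$, where $\mu_x$ is the $s$-fiberwise Haar system on $\G$; smoothness of the family in $x$ follows from continuity of $T$ on $C_c^{\infty}(\G)$.

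Next I would translate the intertwining relation $R_{\gamma^{-1}} T_{r(\gamma)} R_{\gamma} = T_{s(\gamma)}$ into an identity on kernels. Unwinding the definitions of $R_\gamma$ (precomposition with the right-translation diffeomorphism $r_\gamma \colon \X_{r(\gamma)} \to \X_{s(\gamma)}$, $w \mapsto w\cdot\gamma$) and using the invariance \eqref{Hinv} of the Haar system $\{\lambda_x\}$ together with right-invariance of $\mu$, one obtains
\[
k_{s(\gamma)}(z\cdot\gamma, \eta) = k_{r(\gamma)}(z, \gamma\cdot\eta), \quad (z,\gamma) \in \X \ast \G, \ (\gamma, \eta) \in \Gpull .
\]
This says precisely that the value $k_{r(\gamma)}(z,\gamma)$ depends on the pair $(z,\gamma)$ only through the combination $z \cdot \gamma^{-1} \in \X$: indeed, given $(z, \gamma^{-1}) \in \X \ast \G$ with $q(z) = s(\gamma)$ — equivalently $r(\gamma) = q(z\cdot\gamma^{-1})$ — apply the displayed identity with $z$ replaced by $z\cdot\gamma^{-1}$ and $\eta = \mathrm{id}_{s(\gamma)}$ to get a well-defined function $k_T(z\cdot\gamma^{-1}) := k_{r(\gamma)}(z\cdot\gamma^{-1}\cdot\gamma, \gamma) = k_{r(\gamma)}(z, \gamma)$ on $\X$, using axiom \emph{iii)} of Definition \ref{Def:actions}. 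One checks directly that this is independent of the representative $(z,\gamma)$ by another application of the cocycle identity.

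Finally I would substitute back: for $u \in C_c^{\infty}(\G)$ and $z \in \X$, writing $x = q(z)$ and changing variables $\gamma \mapsto \gamma$ in $\G_x$,
\[
(Tu)(z) = (T_x u)(z) = \int_{\G_x} k_x(z,\gamma)\, u(\gamma)\, d\mu_x(\gamma) = \int_{\G_{q(z)}} k_T(z\cdot\gamma^{-1})\, u(\gamma)\, d\mu_{q(z)}(\gamma),
\]
which is the claimed formula. The smoothness (or appropriate regularity) of $k_T$ on $\X$ follows from the smoothness of the family $(k_x)$ and the fact that the map $(z,\gamma) \mapsto z\cdot\gamma^{-1}$ is a submersion, so $k_T$ is the descent of a smooth function along it. The main obstacle I anticipate is the bookkeeping in the kernel identity: carefully matching the Haar system invariance \eqref{Hinv} on $\X$ with the right-invariance of the fiberwise measures $\mu_x$ on $\G$, and keeping track of which fiber each variable lives in when passing through the diffeomorphisms $r_\gamma$ — everything else is a routine transcription of the groupoid case in \cite{NWX}.
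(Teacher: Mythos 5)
Your overall strategy coincides with the paper's: represent each $T_x$ by a fiberwise Schwartz kernel $k_x$ on $\X_x \times \G_x$, convert the equivariance relation \eqref{inv} into an identity among these kernels, observe that the identity forces $k_{q(z)}(z,\gamma)$ to depend only on $z\cdot\gamma^{-1}$, and substitute back. The difficulty is that the kernel identity you display is not the one \eqref{inv} produces, and as written it is not even composable: in
\[
k_{s(\gamma)}(z\cdot\gamma,\eta)=k_{r(\gamma)}(z,\gamma\cdot\eta)
\]
the left side requires $\eta\in\G_{s(\gamma)}$, i.e.\ $s(\eta)=s(\gamma)$, while on the right the product $\gamma\cdot\eta$ requires $r(\eta)=s(\gamma)$ and then lies in $\G_{s(\eta)}$ rather than in $\G_{r(\gamma)}$; the two sides cannot simultaneously make sense unless $r(\gamma)=s(\gamma)$. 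Unwinding \eqref{inv} correctly (apply $R_{\gamma^{-1}}$ to shift the output variable, substitute $\tilde\eta=\eta\gamma$ in the integral over $\G_{r(\gamma)}$ using right-invariance of $\mu$, then invoke uniqueness of the fiberwise Schwartz kernels) yields
\[
k_{s(\gamma)}(z,\tilde\eta)=k_{r(\gamma)}(z\cdot\gamma^{-1},\tilde\eta\cdot\gamma^{-1}),\qquad z\in\X_{s(\gamma)},\ \tilde\eta\in\G_{s(\gamma)},
\]
that is, \emph{both} slots are translated by the same element $\gamma^{-1}$ on the same side. This is the step your proposal needs and does not currently deliver.

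A second, related point: your passage from the identity to the well-definedness of $k_T$ is circular as written. The chain $k_T(z\cdot\gamma^{-1}):=k_{r(\gamma)}(z\cdot\gamma^{-1}\cdot\gamma,\gamma)=k_{r(\gamma)}(z,\gamma)$ only restates the definition whose consistency is in question, and the actual independence of the representative is exactly what you defer to ``another application of the cocycle identity.'' With the corrected identity this becomes a one-line argument: taking $\tilde\eta=\gamma$ gives $k_{s(\gamma)}(z,\gamma)=k_{r(\gamma)}\bigl(z\cdot\gamma^{-1},\id_{r(\gamma)}\bigr)$, so the kernel is the function $w\mapsto k_{q(w)}\bigl(w,\id_{q(w)}\bigr)$ evaluated at $w=z\cdot\gamma^{-1}$, manifestly depending on $z\cdot\gamma^{-1}$ alone. (The paper instead verifies directly that two representatives $z\cdot\gamma^{-1}=\tilde z\cdot\tilde\gamma^{-1}$ give the same value by means of $\delta=\gamma^{-1}\tilde\gamma$; either route works, but one of them must actually be carried out.) The remaining ingredients of your plan --- the fiberwise kernel theorem and the final substitution into the integral --- match the paper and are fine.
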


\begin{proof}
First we can write for $z \in \X$
\begin{align*}
& (R_{\gamma^{-1}} T_{r(\gamma)} R_{\gamma})u(z) = (T_{r(\gamma)} R_{\gamma} u)(z \cdot \gamma^{-1}) = \int_{\G_{r(\gamma)}} k_{r(\gamma)}(z \cdot \gamma^{-1}, \eta) u(\eta \gamma) \,d\mu_{r(\gamma)}(\eta) \\
&= \int_{\G_{s(\gamma)}} k_{r(\gamma)}(z \cdot \gamma^{-1}, \tilde{\eta} \cdot \gamma^{-1}) u(\tilde{\eta}) \,d\mu_{s(\gamma)}(\tilde{\eta}) 
\end{align*}

via the substitution $\tilde{\eta} := \eta \gamma$ and invariance of Haar system.
By use of \eqref{inv} we see that the last integral equals
\[
(T_{s(\gamma)} u)(z) = \int_{\G_{s(\gamma)}} k_{s(\gamma)}(z, \eta) u(\eta)\,d\mu_{s(\gamma)}(\eta).
\]

This implies the following identity by the uniqueness of the Schwartz kernel for $T_x$ for each $x \in M$
\begin{align}
\forall_{\gamma \in \G} \ k_{s(\gamma)}(z, \tilde{\eta}) = k_{r(\gamma)}(z \cdot \gamma^{-1}, \tilde{\eta} \cdot \gamma^{-1}). \tag{$*$} \label{*}
\end{align}

To see that $k_T$ is well-defined assume $\beta = z \cdot \gamma^{-1} = \tilde{z} \cdot \tilde{\gamma}^{-1}$ and $\delta = \gamma^{-1} \cdot \tilde{\gamma}$,
then
\begin{align*}
k_{s(\tilde{\gamma})}(\tilde{z}, \tilde{\gamma}) &= k_{s(\delta)}(\tilde{z}, \tilde{\gamma}) =_{\eqref{*}} k_{r(\delta)}(\tilde{z} \delta^{-1}, \tilde{\gamma} \delta^{-1}) \\
&= k_{s(\gamma)}(\beta \tilde{\gamma} \delta^{-1}, \tilde{\gamma} \delta^{-1}) \\
&= k_{s(\gamma)}(z, \gamma). 
\end{align*}

This completes the proof.
\end{proof}

% Lie algebroids 
% Brief intro..
% ...

\subsection{Lie algebroids}

The aim of this section is to give a definition of Lie algebroids and subalgebroids.
We restrict ourselves to the bare minimum needed in the following text of the paper.
For a more detailed exposition the reader may consult e.g. \cite{MOER}. 

\begin{Def}
\begin{itemize}
\item A \emph{Lie algebroid} is a tuple $(E, \ \varrho)$. 
Here $\pi \colon E \to M$ is a vector bundle over a manifold $M$ and $\varrho \colon E \to TM$ is a vector bundle map such that
\[
\varrho \circ [V, W]_{\Gamma(E)} = [\varrho \circ V, \varrho \circ W]_{\Gamma(TM)}
\]

and
\[
[V, fW]_{\Gamma(E)} = f [V,W]_{\Gamma(E)} + \varrho(V)(f) W, \ f \in C^{\infty}(M), \ V, W \in \Gamma(E).
\]

\item Given two Lie algebroids $(\A, \ \varrho)$ and $(\tilde{\A}, \ \tilde{\varrho})$ over the same manifold $M$. 
Then a \emph{Lie algebroid morphism} is a map $\varphi \colon \A \to \tilde{\A}$ making the following diagram commute
\[
\xymatrix{
& TM & \\
\A \ar[dr]_{\pi} \ar[ur]^{\varrho} \ar[r(1.8)]^{\varphi} & & \ar[dl]_{\tilde{\pi}} \ar[ul]^{\tilde{\varrho}} \tilde{\A}  \\
& M &
}
\]

and such that $\varphi$ preserves Lie bracket: $\varphi [V, W]_{\Gamma(\A)} = [\varrho(V), \varrho(W)]_{\Gamma(TM)}$. 
\end{itemize}

\label{Def:algbroid}
\end{Def}

% brief: construction of algebroid

% following notions specific to our problem:
% subalgebroid
% integrability problem

We briefly summarize some relevant facts about the construction of Lie algebroids.

\begin{itemize}
\item For any given Lie groupoid $\G$ we obtain an associated algebroid $\A(\G)$ in a covariantly functorial way.
Define $T^s \G := \ker(ds)$ the \emph{$s$-vertical tangent bundle} as a sub-bundle of $T\G$. 
Denote by $\Gamma(T^s \G)$ the smooth sections and define $\Gamma_R(T^s \G)$ as the sections $V$ such that
\[
V(\eta \gamma) = (R_{\gamma})_{\ast} V_{\eta} \ \text{for} \ (\eta, \gamma) \in \Gpull.
\]

We then define the Lie-algebroid associated to $\G$ via the pullback
\[
\xymatrix{
\A(\G) \ar[d] \ar[r]^{u^{\ast}} & T^s \G \ar[d]_{\pi_{|s}} \\
M \ar[r]^{u} & \G. 
}
\]

In other words $\A(\G) := \{(V, x) | ds(V) = 0, \ u(x) = 1_x = \pi(V)\}$. 

\item There is a canonical isomorphism of Lie-algebras $\Gamma_R(T^s \G) \cong \Gamma(\A(\G))$. The set of smooth sections $\Gamma(\A(\G))$ is a $C^{\infty}(M)$-module with the module operation $f \cdot V = (f \circ r) \cdot V$
with $f \in C^{\infty}(M)$.

\item Let $\A(\G)$ be given as above and define $\varrho \colon \A(\G) \to TM$ by $\varrho := dr \circ u^{\ast}$.
Then $(\A(\G), \varrho)$ so defined furnishes a Lie algebroid.

\item A Lie algebroid is said to be \emph{integrable} if we can find an associated (with connected $s$-fibers) Lie groupoid.
Not every Lie algebroid is integrable \cite{CF}.

\end{itemize}

\section{Boundary structure}
\label{BdyStruct}

With the given Lie manifold with boundary we want to associate a so-called \emph{boundary structure}.
% To motivate the definition we start with the special case of $b$-vector fields. 
We define a boundary structure and we show that for particular examples of Lie structures there is a boundary structure.
What is necessary in the general case is a certain assumption on the groupoids $\G,  \ \G_{\partial}$, namely they ought to define a bibundle structure
which we are going to specify. The boundary structure is in fact a good analogy for blow-ups of the corners which are the intersections of $Y$ with the (singular) boundary at infinity of
$M$. These blow-ups are in our setup canonically defined in terms of $\G$ and $\G_{\partial}$, the groupoids integrating $\A$ and $\A_{\partial}$.

Recall that a \emph{bibundle} correspondence between two Lie groupoids $\G$ and $\H$ implemented
by $\X$ is a left $\H$-space $(\X, p)$ and a right $\G$-space $(\X, q)$ with free and proper right action of $\G$ on $\X$ such that there is a homeomorphism $\X / \G \iso \Hop$ induced by $p$ and the actions
of $\G$ and $\H$ commute, see also \cite{HS2}. 

% Therefore our notion of \emph{boundary structure} requires no further assumptions. 
% It only depends on the Lie structure itself and the assumption that it leads to algebroids which are integrable via compatible groupoids.

In the following we give the axioms necessary to define a boundary structure.
% At first we state a standing assumption. This assumption will get rid of the ambiguity in choice mentioned in example \ref{Exa:ambi}. 
% refer to Monthubert for definition of a decoupage
% Assumption in intro?

% \textbf{Assumption B.} To the associated algebroids $\A$ and $\A_{\partial}$ we can find Lie groupoids $\G \rightrightarrows M, \ \G_{\partial} \rightrightarrows Y$
% which integrate the algebroids and such that $\G$ is related to $\G_{\partial}$ in the following sense.
% We have $\G_{\partial} = \X \cap \X^t$ where $\X = \G_{Y}^{M}, \ \X^t = \G_{M}^{Y}$ and the bimodule structure $(\G, \G_{\partial})$ induced by the canonical actions
% on $\X$ (and hence on $\X^t$). 

%\textbf{Assumption B.} There is a decoupage $(\widetilde{M}, \E)$ such that $\G = \G^c(\E)_M^M, \ \G_{\partial} = \G^c(\E)_Y^Y, \ \X = \G^c(\E)_Y^M, \ \Xop = \G^c(\E)_M^Y$ and a canonical bibundle structure $(\G, \G_{\partial})$. 
%The groupoids $\G \rightrightarrows M, \ \G_{\partial} \rightrightarrows Y$ are such that $\A \cong \A(\G), \ \A_{\partial} \cong \A(\G_{\partial})$.

% definition
% then examples: Monthubert
\begin{Def}
A \emph{boundary structure} is defined as a tuple $(\G, \G_{\partial}, \G^{\pm}, \X, \Xop, \flip)$ consisting of a Lie groupoid
$\G \rightrightarrows M$ and two manifolds (possibly with corners) $\X, \ \Xop$ which are diffeomorphic
via a \emph{flip diffeomorphism} $\flip$ and subgroupoids $\G^{\pm} \rightrightarrows X_{\pm}$ of $\G$.

We impose the following axioms on this data:

\emph{i)} $\A(\G) \cong \A_{2\V}, \ \A(\G_{\partial}) \cong \A_{\partial}$ as well as $\A(\G^{\pm}) \cong \A_{\V}$ as Lie algebroids.

%And such that
%\[
%\A(\G^{\pm}) \cong \A_{\V_{\pm}}. 
%\]

\emph{ii)} $\X$ is a right $\G$- and a left $\G_{\partial}$-space and $\Xop$ is a left $\G$- and a right $\G_{\partial}$-space and these actions implement a bibundle correspondence between $\G$ and $\G_{\partial}$. 
The charge maps of the actions $p \colon \X \to Y, \ q \colon \X \to M$ and $p^t \colon \Xop \to M, \ q^t \colon \Xop \to Y$ are such that $p$ and
$q^t$ are surjective submersions.

\emph{iii)} Restricted to the interior we have 
\begin{align*}
& \X_{|Y_0 \times M_0} = p^{-1}(M_0) \cap q^{-1}(Y_0) \cong Y_0 \times M_0, \\
& \Xop_{|M_0 \times Y_0} = (p^t)^{-1}(M_0) \cap (q^t)^{-1}(Y_0) \cong M_0 \times Y_0.
\end{align*}

\emph{iv)} The fibers of $\G^{\pm}$ are the interiors of smooth manifolds with boundary, namely:
\begin{align*}
& \partial_{reg} \G_x^{+} = \X_x, \ x \in X_{+} \\
& \partial_{reg} \G_x^{-} = \X_x, \ x \in X_{-}.
\end{align*}

% definition, example: pair groupoid, b-groupoid (again) this time sets: \G(M) = \bigcup_{F open} F \times F \times \Rr^{\codim(F)}
% form of the spaces \X, \X^t
% verify boundary \partial \G(X) via recourse to defn of Lie mf w bdy
\label{Def:bdystruct}
\end{Def}

% examples: pair groupoid (standard case), b-groupoid, cusp-groupoid, fibered cusp-groupoid

% Theorem: For b-, cusp- and fibered-cusp Lie manifold with boundary there is a boundary structure.
% better: For the Lie structure \V \in \{\V_b, \V_c, \V_{fc}\} there is a boundary structure.
% to verify: conditions for a bibundle correspondence, (defn see e.g. Muhly)
% right action free and proper, actions commute
% ...

\begin{Exa}
\emph{i)} Consider a compact manifold $X$ with boundary $\partial X = Y$ and interior $\mathring{X} := X \setminus Y$. Then we also fix the double $M = 2X$. 
In this (trivial) case the spaces are given by $\X := Y \times M, \ \Xop = M \times Y$ with the flip $\flip(x', y) = (y, x'), \ (x', y) \in Y \times M$.
We have here the pair groupoids $\G = M \times M, \ \G_{\partial} = Y \times Y$ as well as $\G^{+} = \mathring{X}_{+} \times \mathring{X}_{+}, \ \G^{-} = \mathring{X}_{-} \times \mathring{X}_{-}$. 
Then $p, q$ are just the projections $\pi_1 \colon Y \times M \to Y, \ \pi_2 \colon Y \times M \to M$.

\emph{ii)} We consider the Lie structure $\V_b := \{V \in \Gamma^{\infty}(TM) : V \ \text{tangent to} \ F_i, \ 1 \leq i \leq N\}$, cf. \cite{M}. 
The Lie algebroid $\A \to M$ is the $b$-tangent bundle such that $\Gamma(\A) \cong \V_b$. 
Following Monthubert \cite{MONT}, we find a Lie groupoid $\G_b(M)$ integrating $\A$ which is $s$-connected, Hausdorff and amenable: 
We start with the set 
\[
\Gamma_b(M) = \{(x, y, \lambda) \in M \times M \times (\Rr_{+})^N : \rho_i(x) = \lambda_i \rho_i(y), \ 1 \leq i \leq N\}
\]
endowed with the  structure  $(x, y, \lambda) \circ (y, z, \mu) = (x, z, \lambda \cdot \mu), \ (x, y, \lambda)^{-1} = (y,x, \lambda^{-1})$
and $r(x, y, \lambda) = x, \ s(x, y, \lambda) = y,\ u(x) = (x,x,1)$. Here multiplication $\lambda\cdot \mu$ and inversion $\lambda^{-1}$ are componentwise.

We then define the $b$-groupoid $\G_b(M)$ as the $s$-connected component (the union of the connected components of the $s$-fibers of $\Gamma_b(M)$),
i.e. $\G_b(M) := \C_s \Gamma_b(M)$. 

\emph{iii)} Fix the Lie structure $\V_{c_l}$ of 
generalized cusp vector fields for $l \geq 2$ given by the local generators in a tubular neighborhood of a boundary hyperface:
$\{x_1^l \partial_{x_1}, \partial_{x_2}, \cdots, \partial_{x_n}\}$. 
Let us recall the construction of the associated Lie groupoid $\G_l(M)$, the so-called generalized cusp groupoid,  given in \cite{LMN} for the benefit of the reader. 
We set
\[
\Gamma_l(M) := \{(x, y, \mu) \in M \times M \times (\Rr_{+})^N : \mu_i \rho_i(x)^l \rho_i(y)^l = \rho_i(x)^l - \rho_i(y)^l\}
\]
with structure $r(x, y, \lambda) = x, \ s(x, y, \lambda) = y, \ u(x) = (x,x,0)$ and $(x, y, \lambda) (y, z, \mu) = (x, z, \lambda + \mu)$. We then define $\G_l(M)$ as the $s$-connected component of $\Gamma_l(M)$.

\emph{iv)} The following example of the fibered cusp calculus is from Mazzeo and Melrose \cite{MM} and we use the formulation and notation for manifolds with fibered corners as given in \cite{DLR}. 
We briefly recall the definition of the associated groupoid and refer to loc. cit. for the details. See also \cite{Guil} for the 
precise geometric construction of the Lie groupoid for a different type of fibered cusp Lie structure.
Write  $\pi = (\pi_1, \cdots, \pi_N)$, where $\pi_i \colon F_i \to B_i$ are fibrations; $B_i$ is the base, which is a compact manifold with corners. 
Define the Lie structure
\[
\V_{\pi} := \{V \in \V_b : V_{|F_i} \ \text{tangent to the fibers} \ \pi_i \colon F_i \to B_i, \ V \rho_i \in \rho_i^2 C^{\infty}(M)\}.
\]
Then $\V_{\pi}$ is a finitely generated $C^{\infty}(M)$-module and a Lie sub-algebra of $\Gamma^{\infty}(TM)$. 
The corresponding groupoid is amenable  \cite[Lemma 4.6]{DLR}; as a set it is defined as
\[
\G_{\pi}(M) := (M_0 \times M_0) \cup \left(\bigcup_{i = 1}^{N} (F_i \times_{\pi_i} T^{\pi} B_i \times_{\pi_i} F_i) \times \Rr\right), 
\]
where $T^{\pi} B_i$ denotes the algebroid of $B_i$.
\label{Exa:bdystruct}
\end{Exa}

\begin{Thm}
For the Lie structure $\V \in \{\V_b, \V_{c_l}, \V_{\pi}\}, \ l \geq 2$ there is a boundary structure.
\label{Thm:bdystruct}
\end{Thm}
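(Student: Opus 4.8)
The plan is to verify the axioms (i)--(iv) of Definition \ref{Def:bdystruct} for each of the three Lie structures $\V_b$, $\V_{c_l}$ ($l \geq 2$), and $\V_{\pi}$ by producing, in each case, the groupoid $\G$ integrating $2\V$ on $M$, the boundary groupoid $\G_\partial$ on $Y$, the half-groupoids $\G^{\pm}$, and the spaces $\X$, $\Xop$ together with the flip $\flip$. Since $Y \subset M$ is an embedded transversal Lie submanifold, the groupoid $\G_\partial$ is obtained by restriction: $\G_\partial := \G_Y^Y = r^{-1}(Y) \cap s^{-1}(Y)$, and its algebroid is $\A_\partial$ by the compatibility of restriction with integration for transversal submanifolds (this uses assumption \emph{i)} on the bijection of boundary faces, so that $Y$ meets each hyperface of $M$ transversally in the required way). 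The spaces are then defined uniformly as $\X := \G^Y = r^{-1}(Y)$ and $\Xop := \G_Y = s^{-1}(Y)$, with charge maps $q = s|_{\X}$, $p = $ (the induced map $\X \to \X/\G_\partial \cong Y$ coming from the left $\G_\partial$-action), and symmetrically for $\Xop$; the flip $\flip := i|_{\X}$ is the restriction of groupoid inversion, which is a diffeomorphism $\X \to \Xop$ exchanging $r$ and $s$.

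First I would check axiom \emph{ii)}, the bibundle correspondence. The left $\G_\partial$-action on $\X = r^{-1}(Y)$ is by groupoid multiplication on the left (well-defined since $r(\gamma) \in Y$ for $\gamma \in \X$), and the right $\G$-action is by multiplication on the right; these commute by associativity. The right $\G$-action is free and proper because it is the restriction of the (free, proper) action of $\G$ on itself by right translation, and the quotient $\X/\G = r^{-1}(Y)/\G \cong Y$ via $r$, because two arrows with the same range and composable-by-right with all of $\G_{s(\gamma)}$ lie in the same orbit. Surjectivity of $p$ and $q^t$ follows since $r, s$ are submersions onto $M$ restricting to submersions onto $Y$. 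Then axiom \emph{iii)} is immediate from assumption \emph{ii)} in the introduction: $\G_{M_0} \cong M_0 \times M_0$, so $\X_{|Y_0 \times M_0} = r^{-1}(Y_0) \cap s^{-1}(M_0) \cong Y_0 \times M_0$, and dually for $\Xop$. Axiom \emph{i)} for $\G$ is the hypothesis that $\G$ integrates $2\V$; for $\G^{\pm}$ it follows because $X_{\pm}$ carries the restricted Lie structure $\V$ and $\G^{\pm}$ is constructed as the corresponding restricted/reduced groupoid, so $\A(\G^{\pm}) \cong \A_\V$ by naturality of integration under this restriction.

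Axiom \emph{iv)} is where the three cases genuinely diverge and where the main work lies, so I would treat them separately using the explicit models in Example \ref{Exa:bdystruct}. For $\V_b$: $X_{\pm}$ is cut out inside $M$ by the sign of the defining function of $Y$, and the fiber $\G_b(M)_x^+$ for $x \in X_+$ is $\{(x,y,\lambda) : y \in X_+, \ \rho_i(x) = \lambda_i \rho_i(y)\}$ intersected with the $s$-connected component; its closure adds exactly the points with $y \in Y$, i.e. the fiber $\X_x = \G_b(M)^Y_x$, so $\partial_{reg}\G_x^+ = \X_x$ as required, and symmetrically for $X_-$. For the generalized cusp groupoid $\G_l(M)$ one argues the same way using the relation $\mu_i \rho_i(x)^l\rho_i(y)^l = \rho_i(x)^l - \rho_i(y)^l$; as $y \to Y$ (some $\rho_i(y) \to 0$) the parameter $\mu$ runs off to the boundary of $(\Rr_+)^N$ and one must check that the resulting compactified fiber is a smooth manifold with boundary whose boundary is $\X_x$ — this is where one uses the explicit smooth structure on $\G_l(M)$ from \cite{LMN}. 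For $\V_{\pi}$, one uses the decomposition $\G_\pi(M) = (M_0\times M_0) \cup \bigcup_i (F_i \times_{\pi_i} T^\pi B_i \times_{\pi_i} F_i)\times\Rr$ and checks that restricting the first factor to $X_\pm$ gives a fiberwise manifold with boundary, the boundary being obtained by sending the $F_i$-coordinate to $Y$; amenability of $\G_\pi$ is already recorded in \cite[Lemma 4.6]{DLR}.

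The hard part will be axiom \emph{iv)} in the cusp and fibered-cusp cases: one must show that the fiberwise closure $\overline{\G_x^{\pm}}$ is a genuine smooth manifold with corners whose regular boundary component is precisely $\X_x$, rather than something with lower-dimensional strata appearing with the wrong multiplicity, and that the flip $\flip = i|_{\X}$ is compatible with the fibered-corner structures near $\partial Y$. This requires a careful local computation in the gluing charts of $\G_l(M)$ and $\G_\pi(M)$ near $r^{-1}(\partial Y)$, using the transversality relation \eqref{transv} to ensure $Y$ meets each stratum of $M$ cleanly so that the induced structure on $\X$ is again that of a Lie-manifold-with-corners datum. The remaining verifications (freeness, properness, the quotient identification, amenability and Hausdorffness of $\G$) are either immediate restrictions of the known properties of $\G_b, \G_l, \G_\pi$ or are recorded in the cited references.
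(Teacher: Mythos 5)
Your proposal is correct and follows the same overall architecture as the paper: identical choices of $\X = r^{-1}(Y)$, $\Xop = s^{-1}(Y)$, $\G_{\partial} = \G_Y^Y$, $\G^{\pm}$ as the groupoids of the halves, and $\flip$ as restricted inversion, followed by an axiom-by-axiom check. Two differences are worth recording. First, for the quotient identification $\X/\G \cong Y$ you argue abstractly that two elements $z,w$ with $p(z)=p(w)$ are connected by $\gamma = z^{-1}w$; the paper instead computes the connecting element explicitly in the $(x,y,\lambda)$-coordinates of $\Gamma_b(M)$ and checks the defining relation $\rho_i(x)=\lambda_i\rho_i(y)$ on limits of boundary-defining-function ratios. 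Your argument is cleaner, but note that both you and the paper must still address whether the connecting element lies in the $s$-connected component $\G_b(M)=\C_s\Gamma_b(M)$ rather than just in $\Gamma_b(M)$; the paper at least flags this reduction, while your write-up is silent on it. Second, and more substantively, for axiom \emph{iv)} the paper does not perform fiberwise closures in the explicit models: it invokes the local triviality property $\G_x^{+} \cong F \times \Rr_{+}^{\ast}$ and $\X_x \cong F_{ij}\times\Rr_{+}^{\ast}$ (available for all three structures via \cite{MONT}, \cite{LMN}, \cite{DLR} and the standing assumption $\G_F \cong F\times F\times G$), which makes the verification uniform across $\V_b$, $\V_{c_l}$, $\V_{\pi}$ and is precisely why the paper can dismiss the cusp and fibered-cusp cases as "along the same lines." Your identification of those cases as the hard part is a fair criticism of that shortcut, but the local-triviality route would spare you the delicate chart-by-chart closure analysis you anticipate. (Minor slip: the quotient that must be identified with $Y$ is $\X/\G$, not $\X/\G_{\partial}$, and $p$ is simply $r|_{\X}$.)
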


\begin{proof}
The claim is proven for the case $\V = \V_b$. For the other cases the argument goes along the same lines, so we omit it.
We have the fixed boundary defining functions for the hyperfaces of $M$ and denote this family by $(p_j)_{j \in I}$. 
On $Y$ there are the boundary defining functions (relative to $Y$), denoted by $(q_j)_{j \in I}$. 
These are the boundary defining functions of the faces from the intersections of $Y$ with the strata of $M$. 
We set $\X := r^{-1}(Y) = \G(M)_M^Y$ and $\Xop := s^{-1}(Y) = \G(M)_Y^M$. 
Consider now the topology of $\X$ and $\Xop$. It is defined in local charts by the rule
\begin{align*}
& Y_0 \times M_0 \ni (x_n', y_n) \to (x', y, \lambda = (\lambda_j)_{j \in I}) :\Leftrightarrow \frac{q_j(x_n')}{p_{i}(y_n)} \to \lambda_j, \ n \to \infty, \ i, j \in I, \
x_n' \to x', \ y_n \to y.
\end{align*}

The right action of $\G(M)$ on $\X$ is given by right composition and the left action of $\G(Y)$ is given by left composition.
On the interior we only have the pair groupoids. This yields the trivial actions
\[
\begin{tikzcd}[every label/.append style={swap}]
Y_0 \times Y_0 \arrow[symbol=\circlearrowleft]{r} & \arrow{dl}{p = \pi_1} Y_0 \times M_0 \arrow[symbol=\circlearrowright]{r} \arrow{dr}{q = \pi_2} & M_0 \times M_0 \\
Y_0 & & M_0 
\end{tikzcd}
\]
These actions can be extended continuously to the closure of $Y_0 \times M_0$ in $\G$ and also of $M_0 \times Y_0$,
and we obtain from the definition of the topology that
\[
\X = \overline{Y_0 \times M_0}^{\G}, \ \Xop = \overline{M_0 \times Y_0}^{\G}. 
\]

The continued actions are defined
\[
\begin{tikzcd}[every label/.append style={swap}]
\G(Y) \arrow[symbol=\circlearrowleft]{r} & \arrow{dl}{p = r_{\partial}} \X \arrow[symbol=\circlearrowright]{r} \arrow{dr}{q = s} & \G(M) \\
Y & & M 
\end{tikzcd}
\]
Axiom \emph{i)} holds because of \cite{MONT} where it was shown that the given $b$-groupoids integrate the Lie structure of $b$-vector fields.
It is immediate to check that the actions commute and by definition the charge map $q$ induced by the source map of $\G(M)$ is a surjective submersion. 
The action of $\G(M)$ on its space of units is free and proper, hence the action of $\G(M)$ by right composition on $\X$ is free and proper.
% proof: 
Here properness means that the map $\X \ast \G(M) \to \X \times \X, \ (z, \gamma) \mapsto (z \gamma, z)$ is a homeomorphism onto
its image. 
Let $z \gamma = z$ then $q(z) = r(z) = r(\gamma)$ which implies composability and $s(\gamma) = q(z \gamma) = q(z)$.
Hence $\gamma = \id_{q(z)}$ which verifies that the action is free. The same can be proven for the left action under our assumption,
but we do not need this fact. 
Finally, we need to check that we have a diffeomorphism $\X / \G(M) \iso Y$ induced by the charge map $p$.
We first check this for the groupoids $\Gamma(M)$ and $\Gamma(Y)$, then we take the $s$-connected components which proves the assertion
for the groupoid $\G(M)$ and $\G(Y)$.
We have to show that $p(z) = p(w)$ for $z, w \in \X$ if and only if there is a necessarily unique $\eta \in \G(M)$ such that $w = z \cdot \eta$.
Let $z = (x', y, (\lambda_i)_{i \in I}), \ w = (x', \tilde{y}, (\mu_i)_{i \in I})$ and set $\eta = \left(y, \tilde{y}, \left(\frac{\mu_i}{\lambda_i}\right)_{i \in I}\right)$. 
By definition of the topology of the groupoids fix the sequences $(x_n', y_n)$ such that $\frac{q_j(x_n')}{p_j(\tilde{y}_n)} \to \lambda_j, \ j \in I, \ n \to \infty$
and $(x_n', y_n)$ such that $\frac{q_j(x_n')}{p_j(y_n)} \to \mu_j, \ j \in I, \ n \to \infty$. 

Then $\eta \in \Gamma(M)$ since
\[
\frac{p_i(y_n)}{p_i(\tilde{y}_n)} = \frac{q_i(x_n')}{p_i(\tilde{y}_n)} \left(\frac{q_i(x_n')}{p_i(y_n)}\right)^{-1} \to \frac{\mu_i}{\lambda_i}, \ i \in I, \ n \to \infty.
\]

This concludes the proof of the isomorphism $\X / \G(M) \cong Y$. Hence we obtain a bibundle correspondence and Axiom \emph{ii)} is verified.
Part \emph{iii)} follows from the definition of the actions we just gave. 
Also note that the flip diffeomorphism $\flip \colon \X \iso \Xop$ is defined by
\[
\flip \colon (x', y, (\lambda_i)_{i \in J}) \mapsto \left(y, x', \left(\frac{1}{\lambda_i} \right)_{i \in J}\right).
\]

% identify the flip!
% notice that topology inverse is taken: (x', y, \lambda) = (y, x', 1/ \lambda) !!

It remains to verify condition \emph{iv)}. For this we define $\G^{\pm} := \G(X_{\pm})$ and prove that this groupoid
has the required property.
Thus we want to show that
\[
\partial_{reg} \G_x^{+} = \X_x, \ x \in X.  % topological boundary, \partial_{top} \G_x^{+}
\]

The boundary is possibly empty (for $x$  not incident to the hypersurface $Y$).
We have to distinguish two cases: $x$ in the interior and $x$ on the boundary of $M$.
The groupoid fiber $\G_x^{\pm}$ for $x$ in the interior $M_0$ trivializes to the pair groupoids and this case is thus immediate. We need to consider the case
of a point on the boundary of $M$.
Assume that $x \in F$ for some open face $F$ of $M$ which is incident to $Y$ (i.e. shares a hyperface with $Y$). 
By the local triviality property of groupoids (see \cite{MONT}) we have
\[
\G_x^{+} \cong F \times \Rr_{+}^{\ast}. 
\]

The same follows by definition for $\X$, i.e.
\[
\X_x \cong F_{ij} \times \Rr_{+}^{\ast}
\]

where $F_{ij}$ denotes the face of $F$ such that $F_{ij} = \overline{F} \cap Y$. 
Via the definition of the Lie manifold with boundary (cf. \cite{AIN}) we obtain that the component $F \times \Rr_{+}^{\ast}$
is the interior of a manifold with boundary. In particular we see that
\[
\partial_{reg} (F \times \Rr_{+}^{\ast}) = \partial_{reg}(\overline{F} \times \Rr_{+}^{\ast}) \cong F_{ij} \times \Rr_{+}^{\ast}.
\]

In summary, we obtain 
\begin{align*}
\partial_{reg} \G_x^{+} &\cong \begin{cases} 
                      Y_0 &\text{for} \ x \in X_0 \\
		      F_{ij} \times \Rr_{+}^{\ast} &\text{for} \ x \ \text{incident to some} \ F \\
		      \emptyset &\text{otherwise}
                     \end{cases} \\
&\cong \X_x. 
\end{align*}
Hence condition \emph{iv)} holds. 
\end{proof}

\begin{Rem}
On the given Lie manifold with boundary we can always restrict the groupoid $\G$ integrating the Lie structure $2\V$ to a small tubular neighborhood $Y \subset \U \subset M$
such that the faces of $Y$ are in bijective correspondence with the faces of $\U$ as specified in the introduction.
Then it is not hard to verify that there is a bibundle correspondence between $\G_Y^Y = \G_{\partial}$ and $\G$ implemented
by $\X = r^{-1}(Y), \ \Xop = s^{-1}(Y)$. Condition \emph{iii)} follows since $\G_{M_0} \cong M_0 \times M_0$ by assumption. 
Also setting $\G^{\pm} = \G_X^X$ we see that condition \emph{i)} holds. By the previous proof we also obtain that condition \emph{iv)} holds
whenever the groupoid $\G$ is locally trivial. Therefore there is in fact a boundary structure for any Lie manifold with boundary for which there is an
integrating groupoid with the local triviality property.
\label{Rem:bdystruct}
\end{Rem}

\section{Operators on groupoids}

The next goal is to define potential, trace and singular Green operators on the groupoid level.
These operators should be equivariant families of operators on the fibers, similar to the case of
pseudodifferential operators on groupoids.
The singular Green, trace and potential operators are ordinarily defined so as to act like pseudodifferential operators in the cotangent direction and as convolution operators
in the normal direction. 
This somewhat complicated behaviour is difficult to realize in the groupoid setting. 
We start from a different but equivalent definition. 
Our approach is inspired by the ordinary case of a smooth, compact manifold with boundary, cf. \cite{G}. 
Here the trace, potential and singular Green operators are extended to the double of the manifold and
can be understood as conormal distributions with rapid decay along the normal direction.
In our general setting we would therefore like to consider conormal distributions on $Y \times M$ and $M \times Y$ as well
as $M \times M$.
Since we are working in the setting of manifolds with corners we will desingularize these manifolds using Lie groupoids.
This is where the previously introduced notion of a boundary structure enters.
For the cases of $M \times M$ and $Y \times Y$ this is realized through the groupoids $\G$ and $\G_{\partial}$ respectively and the pseudodifferential
operators on groupoids.
We introduce additional blowups $\X$ and $\Xop$ with good properties (fibered over the manifolds $Y$ and $M$) with regard to $\G$ and $\G_{\partial}$. 
Then we define the trace, potential and singular Green operators as distributions on these spaces and $\G$ conormal to the diagonal $\Delta_Y$.

\subsection{Actions}
\label{actions}

% general actions
\textbf{\emph{From now on we fix:}} A boundary structure $(\G, \ \G_{\partial}, \ \G^{\pm}, \ \X, \ \Xop, \ \flip)$ adapted
to our Lie manifold $(X, \ \V)$ with boundary $Y$ and its double $(M, \ 2\V)$. 
We then fix the groupoid actions which are summarized in the following picture. 
In the first column the dotted arrows indicate morphisms in the category of Lie groupoids (correspondences) which are implemented by the actions in the second column. 
\[
\begin{tikzcd}[every label/.append style={swap}]
\G_{\partial} \ar[dotted]{rr} \ar[symbol=\circlearrowleft]{rd} & & \G \ar[dotted]{rr} \ar[symbol=\circlearrowright]{ld} \ar[symbol=\circlearrowleft]{rd} & & \G_{\partial} \ar[symbol=\circlearrowright]{ld} \\[-2\jot]
& \ar{ld}{p} \X \ar{rr}{\flip} \ar{rd}{q} & & \ar{ld}{p^t} \Xop \ar{rd}{q^t} & \\
Y & & M & & Y 
\end{tikzcd}
\]
% \[
% \begin{tikzcd}[every label/.append style={swap}]
% \G_{\partial} \ar[dotted]{rr} \ar[symbol=\circlearrowleft]{rd} & & \G \ar[dotted]{rr} \ar[symbol=\circlearrowright]{ld} \ar[symbol=\circlearrowleft]{rd} & & \G_{\partial} \ar[symbol=\circlearrowright]{ld} \\[-2\jot]
% & \ar{ld}{p} \X \ar{rr}{\flip} \ar{rd}{q} & & \ar{ld}{p^t} \Xop \ar{rd}{q^t} & \\
% Y & & M & & Y 
% \end{tikzcd}
% \]

Fix also Haar systems on the groupoids and fibered spaces as follows.
\begin{align*}
& \G : \{\mu_x\}_{x \in M}, \ \X : \{\lambda_x\}_{x \in M}, \\
& \G_{\partial} : \{\mu_{y}^{\partial}\}_{y \in Y}, \ \Xop : \{\lambda_{x}^t\}_{x \in M}.
\end{align*}

In each case the system is a (left / right)-Haar system if the corresponding action is a (left / right)-action.

\subsection{Local charts}
\label{loccharts}

% fix local charts, s-fiber pres diffeos

In order to define the operators on groupoids and actions as given in the last section we have to introduce the local charts.
The charts are given by diffeomorphisms which preserve the $s$-fibers, see also \cite{PAT2}, p. 3.

% the left- / right- charts
% left / right fiber preserving?
% normal bundles?

Fix the dimensions $n = \dim M = \dim M_0, \ n-1 = \dim Y = \dim Y_0$. 

\begin{itemize}
\item A chart of $\G$ is an open subset $\Omega \subset \G$ which is diffeomorphic to two open subsets of $\Gop \times \Rr^n$.
Choose two open subsets $V_s \times W_s$ and $V_r \times W_r$. 
Then choose two diffeomorphisms $\psi_s \colon \Omega \to V_s \times W_s$ and $\psi_r \colon \Omega \to V_r \times W_r$.
Additionally, we require that these diffeomorphisms are \emph{fiber-preserving} in the sense that $s(\psi_s(x,w)) = x$ for $(x, w) \in V_s \times W_s$ and $r(\psi_r(x, w)) = x$ for $(x, w) \in V_r \times W_r$.
Hence we have the following commuting diagrams:
\begin{align*}
\xymatrix{
& r(\Omega) \times W_r \ar[ld] & \ar[l]_-{\psi_r} \ar[ld] \Omega \ar[rd] \ar[r]^-{\psi_s} & s(\Omega) \times W_s \ar[rd] & \\
V_r & \ar[l] r(\Omega) & & s(\Omega) \ar[r] & V_s
} 
\end{align*}

\item Similarly, the charts for $\X$ are given by the sets of the form $\tilde{\Omega} = \Omega \cap \X$ for charts $\Omega$ of $\G$ fitting into the following commuting diagrams:
\begin{align*}
\xymatrix{
& V_q \times W_q \ar[ld] & \ar[l] \ar[ld] \tilde{\Omega} \ar[rd] \ar[r] & V_{p^t} \times W_{p^t} \ar[rd] & \\
W_{q} & \ar[l] q(\tilde{\Omega}) & & p^t(\tilde{\Omega}) \ar[r] & V_{p^t} 
} 
\end{align*}

\item Analogously, $\tilde{\Omega} \subset \Xop$ are charts with the actions reversed and hence in this case we have the commuting diagrams:
\begin{align*}
\xymatrix{
& V_p \times W_p \ar[ld] & \ar[l] \ar[ld] \tilde{\Omega} \ar[rd] \ar[r] & V_{q^t} \times W_{q^t} \ar[rd] & \\
W_{p} & \ar[l] p(\tilde{\Omega}) & & q^t(\tilde{\Omega}) \ar[r] & V_{q^t}  
} 
\end{align*}
\end{itemize}

% Def. extended trace, pot, sing Green

% fix the vector bundles for \G, \G_{\partial} and \X (later)
% cf. Krainer (for example)

% defn of I^{m}(\X, \Delta_Y) \cong I^{m}(\Xop, \Delta_Y)
% and \Psi(\X)^{\G} 

\begin{Def}
\emph{i)} A family $T = (T_x)_{x \in M}$ of operators $T_x \colon C_c^{\infty}(\G_x) \to C_c^{\infty}(\X_x)$ is a \emph{differentiable family of trace type} iff the following holds.
Given any chart $\Omega \subset \G$ with fiber preserving diffeomorphism, $s(\Omega) \sim \Omega \times W$ for some $W \subset \Rr^n$
open. Moreover for $\tilde{\Omega} := \Omega \cap \X$ such that $\tilde{W} := W \cap \Rr^{n-1}$ we have a fiber-preserving
diffeomorphism $q(\tilde{\Omega}) \sim \tilde{\Omega} \times \tilde{W}$, and for each $\varphi \in C_c^{\infty}(\Omega), \ \tilde{\varphi} \in C_c^{\infty}(\tilde{\Omega})$ the operator
$\tilde{\varphi} T \varphi$ has a Schwartz kernel
\[
k \in I^m(s(\Omega) \times \tilde{W} \times W, \Delta_{\tilde{W}}) \cong C^{\infty}(s(\Omega)) \piotimes I^m(\tilde{W} \times W, \Delta_{\tilde{W}}).
\] 

The operator $\tilde{\varphi} T_x \varphi$ for each $x \in s(\Omega)$ corresponds to the Schwartz kernel $k_x$ 
via the diffeomorphisms $\X_x \cap \tilde{\Omega} \cong \tilde{W}$ and $\G_x \cap \Omega \cong W$. 

\emph{ii)} Analogously, we define a family $K = (K_x)_{x \in M}$ of operators $K \colon C_c^{\infty}(\X_x^t) \to C_c^{\infty}(\G_x)$ with the charts reversed.
This is called \emph{differentiable family of potential type}.

\emph{iii)} A \emph{differentiable family of singular Green type} $(G_x)_{x \in M}$ is a family of operators $G_x \colon C_c^{\infty}(\G_x) \to C_c^{\infty}(\G_x)$ defined as follows.
Given any chart $\Omega \subset \G$ with fiber preserving diffeomorphism $s(\Omega) \sim \Omega \times W$ for some $W \subset \Rr^n$ open
and $\tilde{W} = W \cap \Rr^{n-1}$. 
Then for each $\varphi \in C_c^{\infty}(\Omega)$ the operator $\varphi G \varphi$ has a Schwartz kernel
\[
k \in I^m(s(\Omega) \times W \times W, \Delta_{\tilde{W}}) \cong C^{\infty}(s(\Omega)) \piotimes I^m(W \times W, \Delta_{\tilde{W}}).
\] 

Furthermore, $\varphi G_x \varphi$ for each $x \in s(\Omega)$ corresponds to the Schwartz kernel $k_x$
via the diffeomorphism $\G_x \cap \Omega \cong W$. 

\label{Def:operators}
\end{Def}

% N submanifold of M: local representation of I^m(N \times M, \Delta_N)
% N _embedded_, then atlas on N given by N \cap U etc..

% now define the reduced supports

% operators \mu_{\G}, \mu and \mu^t
Fix the following operations
\begin{align*}
& \mu_{\G} \colon \G \times \G \to \G, (\gamma, \eta) \mapsto \gamma \eta^{-1}, \\
& \mu \colon \X \times \G \to \X, (z, \gamma) \mapsto z \cdot \gamma^{-1}, \\
& \mu^t \colon \G \times \X^t \to \X, (\gamma, z) \mapsto \gamma^{-1} \cdot z
\end{align*}

whenever defined.

A trace type family $T$ has a family of Schwartz kernels $(k_x^T)_{x \in M}$.
Define the support of $T$ as 
\[
\supp(T) = \overline{\bigcup_{x \in M} \supp(k_x^T)}.
\]

The \emph{reduced support} of $T$ is written
\[
\supp_{\mu}(T) = \mu(\supp(T)). 
\]

The analogous definitions for potential type operators $K$ and Green type operators $G$ are given by
\[
\supp_{\mu^t}(K) = \mu^t(\supp(K)), \ \supp_{\mu_{\G}}(G) = \mu_{\G}(\supp(G)). 
\]

\begin{Def}
\begin{itemize}
\item An \emph{extended trace operator} is a differentiable family $T = (T_x)_{x \in M}$ of trace type which is a right $\X$-operator (see Definition \ref{Xop}, p. \pageref{Xop}) such that 
the reduced support of $T$ is a compact subset of $\X$. 

\item An \emph{extended potential operator} is a differentiable family $K = (K_x)_{x \in M}$ of potential type which is a left $\X^t$-operator such that
the reduced support of $K$ is a compact subset of $\X$. 

\item An \emph{extended singular Green operator} is a differentiable family $G = (G_x)_{x \in M}$ of singular Green type which 
is equivariant and such that the reduced support of $G$ is a compact subset of $\G$. 
\end{itemize}

\label{Def:extops}
\end{Def}

% remark: left \Xop iff right \X

% remark 2: proof of Prop. 2.9. identical for K's and G's

% ...

\begin{Rem}
\emph{i)} Since we also have a right action of $\G$ on $\X$ and $\X$ is diffeomorphic (via $\flip$) to $\Xop$ we obtain that being a left $\Xop$-operator
is equivalent to the equivariance condition with regard to the right action of $\G$ on $\X$ given in equation \eqref{inv} on p. \pageref{inv}. 
Hence a potential operator is also a right operator with regard to $\X$ in this sense, which furnishes by the proof of Prop. \ref{Prop:reduced} a reduced kernel for extended potential operators.

\emph{ii)} Note that we obtain the reduced kernels for pseudodifferential operators on $\G$ and extended singular Green operators 
with an argument completely analogous to the proof of Prop. \ref{Prop:reduced}.

\label{Rem:extops}
\end{Rem}

\begin{Prop}
\emph{i)} Given an extended trace operator $T$ the reduced kernel $k_T$ (see Proposition \eqref{Prop:reduced}, p. \pageref{Prop:reduced}) is a compactly supported distribution on $\X$ conormal to $\Delta_Y$. 

\emph{ii)} Analogously an extended potential operator $K$ has reduced kernel $k_K$ a compactly supported distribution on $\Xop$ conormal to $\Delta_Y$.
Furthermore, $K$ is the adjoint of an extended trace operator.

\emph{iii)} An extended singular Green operator $G$ has a reduced kernel $k_G$ being a compactly supported distribution on $\G$ conormal to $\Delta_Y$.

\label{Prop:conormal}
\end{Prop}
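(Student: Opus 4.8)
The plan is to reduce each of the three assertions to the local coordinate description of the operators provided in Definition \ref{Def:operators}, together with the reduced kernel formula of Proposition \ref{Prop:reduced} and its analogues in Remark \ref{Rem:extops}. The point is that ``differentiable family of trace type'' (resp. potential, resp. singular Green type) already forces the family of Schwartz kernels $(k_x)_{x\in M}$ to be a smooth family of conormal distributions in $I^m(\tilde W\times W,\Delta_{\tilde W})$ in every fiber-preserving chart; what must be checked is that the \emph{reduced} kernel, obtained by collapsing the equivariance via the operations $\mu,\mu^t,\mu_\G$, is still a well-defined conormal distribution on $\X$ (resp. $\Xop$, resp. $\G$), conormal to the diagonal $\Delta_Y$, and that the support condition in Definition \ref{Def:extops} makes it compactly supported.

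First I would treat the singular Green case \emph{iii)}, since it is closest to the familiar situation of pseudodifferential operators on a groupoid. By Remark \ref{Rem:extops}~ii) the reduction argument of Proposition \ref{Prop:reduced} applies verbatim: the equivariance relation forces $k_{s(\gamma)}(z,\eta) = k_{r(\gamma)}(z\gamma^{-1},\eta\gamma^{-1})$, so $k_x(z,\eta)$ depends only on $z\eta^{-1}\in\G$, defining $k_G$ on $\G$ via $\mu_\G$. One then checks that $k_G$ is conormal: in a chart $\Omega$ with $s(\Omega)\sim\Omega\times W$ and $\tilde W = W\cap\Rr^{n-1}$, the local kernel lies in $C^\infty(s(\Omega))\piotimes I^m(W\times W,\Delta_{\tilde W})$; pushing forward along the submersion $\mu_\G$ (which is locally the map $(x,w,w')\mapsto$ the chart coordinate of $\gamma\eta^{-1}$) preserves the wavefront set being contained in the conormal bundle of $\Delta_{\tilde W}$, hence $k_G$ is conormal to $\Delta_Y\subset\G$. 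Compact support is exactly the hypothesis $\supp_{\mu_\G}(G)\Subset\G$.

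Next, for the trace case \emph{i)}, I would run Proposition \ref{Prop:reduced} as stated (a right $\X$-operator), obtaining $k_T(z\gamma^{-1}) := k_{r(\gamma)}(z,\gamma)$ well-defined on $\X$ via $\mu$. Conormality to $\Delta_Y$ follows from the defining chart property: $k_x\in I^m(s(\Omega)\times\tilde W\times W,\Delta_{\tilde W})$, and $\mu$ restricted to a chart is the fiber-preserving map collapsing the $W$-variable against the diffeomorphism $\X_x\cap\tilde\Omega\cong\tilde W$, so again the conormal wavefront condition is transported to $\X$, conormal to $\Delta_Y$ (the image of $\Delta_{\tilde W}$). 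Compactness of $\supp(k_T)$ is the requirement that $\supp_\mu(T)$ be a compact subset of $\X$. For the potential case \emph{ii)}, by Remark \ref{Rem:extops}~i) a left $\Xop$-operator is equivalent via $\flip$ to a right $\X$-operator, so the same reduction yields $k_K$ on $\Xop$, conormal to $\Delta_Y$, compactly supported. The ``adjoint of an extended trace operator'' claim I would get by a direct computation: transposing the reduced kernel integral formula and using the flip $\flip\colon\X\to\Xop$ together with the relation between the Haar systems $\{\lambda_x\}$ and $\{\lambda_x^t\}$ shows that the formal adjoint of a trace type family, computed with respect to the fixed Haar measures, is exactly a potential type family with reduced kernel $\flip_*\overline{k_T}$ (up to the usual complex conjugation/density factors).

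The main obstacle I expect is bookkeeping rather than a deep difficulty: verifying that the reduced kernel is \emph{globally} well-defined and conormal requires patching the local conormal-distribution descriptions across overlapping charts and checking that the pushforwards along $\mu$, $\mu^t$, $\mu_\G$ are consistent — i.e. that the ``depends only on $z\gamma^{-1}$'' statement from Proposition \ref{Prop:reduced} upgrades from a pointwise identity of kernels to an identity of conormal distributions with the expected symbol order $m$. One must be careful that $\mu$ (and its analogues), while a submersion onto its image, need not be globally a fibration over $\X$ — so the pushforward of a conormal distribution must be justified microlocally, using that the map is transverse to $\Delta_{\tilde W}$ in each chart, which is exactly where the transversality built into the boundary structure (Definition \ref{Def:bdystruct}) and the fiber-preserving chart condition are used. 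Once this is set up the conormality and support statements drop out, and the adjoint identity in \emph{ii)} is a short explicit computation.
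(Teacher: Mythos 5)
Your proposal is correct and follows the same overall strategy as the paper: existence of the reduced kernel comes from Proposition \ref{Prop:reduced} (and its analogues in Remark \ref{Rem:extops}), conormality is transferred from the fiberwise kernels $k_x$ to the reduced kernel, compact support is just the hypothesis on $\supp_\mu$, and the adjoint statement in \emph{ii)} is the explicit kernel computation $k_x^K(\gamma,z)=\overline{k_x^T(z,\gamma)}$ together with the equivariance check. The one substantive difference is in how conormality is transferred. You propose to push $k_x$ forward along $\mu$ (resp. $\mu^t$, $\mu_{\G}$) and control the wavefront set of the pushforward, and you correctly flag that justifying this pushforward microlocally is the delicate point. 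The paper goes the other way: it writes the fiberwise kernels as pullbacks, $k_x^T=\mu^{\ast}(k_T)_{|\X_x\times\G_x}$, and then proves only that $\singsupp(k_T)\subset\Delta_Y$ by testing against a cutoff $\varphi$ equal to $1$ near $\Delta_Y$ and $0$ near a given point $z\notin\Delta_Y$: since $\mu^{\ast}((1-\varphi)k_T)=(1-\varphi\circ\mu)\mu^{\ast}(k_T)$ restricts on each fiber to $(1-\varphi\circ\mu)k_x^T$, which is smooth by the fiberwise conormality, one concludes $(1-\varphi)k_T$ is smooth. This pullback-and-cutoff argument sidesteps the pushforward issue entirely (at the cost of verifying only the singular-support half of conormality explicitly; the symbol-order bookkeeping is left implicit, as in your sketch). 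Also note the paper proves case \emph{i)} and declares the other cases identical, whereas you start from the singular Green case; this is immaterial.
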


\begin{proof}
We give a proof of conormality for the case \emph{i)} of extended trace operators. The other cases are the same.

Given a family of Schwartz kernels for $(k_x^T)_{x \in M}$ contained in $I^m(\X_x \times \G_x, \X_x)$ for each $x \in M$. 
Rewrite this as
\[
k_x^T = \mu^{\ast}(k_T)_{|\X_x \times \G_x}, \ \X_x \subset \G_x \ \text{(transversal)}.
\]

Here $\mu$ is the map $\X \ast \G \ni (z, \gamma) \mapsto z \cdot \gamma^{-1} \in \X$ and 
\[
\scal{\mu^{\ast}(k_T)}{f} = \left \langle k_T(z), \ \int_{w = z \cdot \gamma} f(w, \gamma) \right \rangle.
\]

Then we need to show that: $\singsupp(k_T) \subset Y \cong \Delta_Y$.

To this end let $z \in \X \setminus \Delta_Y$ and $\varphi \in C_c^{\infty}(\X)$ a cutoff function such that 
$\varphi$ is equal to $1$ in a neighborhood of $\Delta_Y$ and equal to $0$ in a neighborhood containing $z$. 
Then
\[
\mu^{\ast}((1 - \varphi) k_T) = (1 - \varphi \circ \mu) \mu^{\ast}(k_T)
\]

restricted to $\X_x \times \G_x$ yields $(1 - \varphi \circ \mu) k_x^T$ and this is $C^{\infty}$ because $\singsupp(k_x^T) \subset \Delta_x \cong \X_x \subset \X_x \times \G_x$
by definition.
Hence $(1 - \varphi \circ \mu) \mu^{\ast}(k_T)$ is $C^{\infty}$, but this implies that $(1 - \varphi) k_T$ is smooth as well.
This proves conormality.

Finally, we show that a trace operator is the adjoint of a potential operator and vice versa.
Let $T = (T_x)_{x \in M}$ be an extended trace operator and let $(k_x^T)_{x \in M}$ be the corresponding family of Schwartz kernels.
The adjoint $T^{\ast} = (T_x^{\ast})_{x \in M}$ is given by $T_x^{\ast} \colon C_c^{\infty}(\X_x) \to C_c^{\infty}(\G_x)$ such that for $u \in C_c^{\infty}(\X_x)$ we have
\[
(T_x^{\ast} u)(\gamma) = \int_{\X_{s(\gamma)}} \overline{k_x^T(z, \gamma)} u(z) \,d\lambda_{s(\gamma)}(z).
\]

Define the family of operators $K = (K_x)_{x \in M}$ by $K = T^{\ast}$ and $k_x^K(\gamma, z) := \overline{k_x^T(z, \gamma)}$. 
We obtain a family $(k_x^K)_{x \in M}$ of distributions on $\G_x \times \X_x$ conormal to $\Delta_x \cong \X_x$ for each $x \in M$.
In addition $K$ is equivariant with regard to the right action of $\G$ which is by remark \ref{Rem:extops} equivalent to being a left $\X^t$-operator. 
Hence $K$ is an extended potential operator. 
The same argument shows that the adjoint of an extended potential operator is an extended trace operator. 
\end{proof}

\begin{Not}
We fix the notation for the reduced kernels and denote by $I_c^m(\X, \Delta_Y)$ the space of reduced kernels
of extended trace operators of order $m$, by $I_c^m(\Xop, \Delta_Y)$ the reduced kernels of extended potential operators
of order $m$ and by $I_c^m(\G, \Delta_Y)$ the space of reduced kernels of singular Green operators of order $m$. 
For the pseudodifferential operators on $\G$ of order $m$ we use the notation $\Psi^m(\G)$ for the space of operators and $I_c^m(\G, \Delta_M)$ for the reduced kernels.
With the Schwartz kernel theorem it can be proven that the spaces $\Psi^m(\G)$ and $I_c^m(\G, \Delta_M)$ are isomorphic, see \cite{NWX}, p. 24.
\label{Not:reduced}
\end{Not}

\begin{Rem}
We will also use the notation
\begin{align*}
& \Trace^{m,0}(\G, \G_{\partial}) := \J_{tr} \circ I_{c}^{m}(\X, \Delta_Y), \ \Pot^{m,0}(\G, \G_{\partial}) := \J_{pot} \circ I_{c}^{m}(\Xop, \Delta_Y), \\
& \Green^{m,0}(\G, \G_{\partial}) := \J_{gr} \circ I_{c}^{m}(\G, \Delta_Y)
\end{align*}

for these classes of extended trace, potential and singular Green operators, respectively. The $\J_{\cdot}$ in each case 
are the appropriate isomorphisms from the Schwartz kernel theorem.

Hence the operators defined previously act as follows. The mapping
\[
\J_{tr} \colon I_{c}^{m}(\X, \Delta_Y) \to \Trace^{m,0}(\G, \G_{\partial}) \subset \Hom(C_c^{\infty}(\G), C_c^{\infty}(\X))
\]

is for $z \in \X, \ k_T \in I_{c}^{m}(\X, \Delta_Y)$ given by 
\[
(\J_{tr}(k_T)u)(z) = \int_{\G_{q(z)}} k_T(z \cdot \gamma^{-1}) u(\gamma) \,d\mu_{q(z)}(\gamma).
\]

Analogously for the potential operators we have
\[
\J_{pot} \colon I_{c}^{m}(\Xop, \Delta_Y) \to \Pot^{m,0}(\G, \G_{\partial}) \subset \Hom(C_c^{\infty}(\Xop), C_c^{\infty}(\G))
\]

which for $\gamma \in \G, \ k_K \in I_{c}^{m}(\Xop, \Delta_Y)$ is given by
\[
(\J_{pot}(k_K) u)(\gamma) = \int_{\X_{r(\gamma)}^t} k_K(\gamma^{-1} \cdot z) u(z) \,d\lambda_{r(\gamma)}^t(z).
\]

Lastly, for the singular Green operators
\[
\J_{gr} \colon I_{c}^{m}(\G, \Delta_Y) \to \Green^{m,0}(\G, \G_{\partial}) \subset \Hom(C_c^{\infty}(\G), C_c^{\infty}(\G))
\]

we have for $\gamma \in \G, \ k_G \in I_{c}^{m}(\G, \Delta_Y)$
\[
(\J_{gr}(k_G) u)(\gamma) = \int_{\G_{s(\gamma)}} k_G(\gamma \eta^{-1}) u(\eta) \,d\mu_{s(\gamma)}(\eta).
\]
% recheck this..
\label{Rem:Schwartz}
\end{Rem}

% now define the \G equivariant pseudos according to Paterson
% _Then_ good time for remark on vector bundles!!

With any fibered space, longitudinally smooth via an action of a nice enough groupoid one can associate
an equivariant calculus of pseudodifferential operators.
We want to define such a calculus on $\X$ and $\Xop$. 
The following definition can in somewhat greater generality also be found in \cite{PAT2}. 
\begin{Def}
A family of pseudodifferential operators of order $m$ on $\X$ is defined as $S = (S_x)_{x \in M}$ such that

\emph{i)} each $S_x \colon C^{\infty}(\X_x) \to C^{\infty}(\X_x)$ is contained in $\Psi^m(\X_x)$. 

\emph{ii)} For each chart of $\X$ given by $\Omega \sim q(\Omega) \times W$ there is a smooth function
$a \colon q(\Omega) \to S^m(T^{\ast} W)$ such that for each $x \in q(\Omega)$ we have
\[
S_{x|\Omega \cap \X_x} = a_x(y, D_y)
\]

via identifying $\Omega \cap \X_x$ with $W$. 
Here $a_x(y, \xi) = a(x)(y, \xi)$. 
We denote by $\Psi^m(\X)$ the set of pseudodifferential families on $\X$. 
\label{Def:bdypsdo}
\end{Def}

This leads immediately to a definition of equivariant pseudodifferential operators on $\X$ and $\Xop$.
\begin{Def}
The space of equivariant pseudodifferential operators $\Psi^{\bullet}(\X)^{\G}$ on $\X$ consists of elements $S = (S_x)_{x \in M}$ of $\Psi^{\bullet}(\X)$ such that the
following equivariance condition holds
\[
R_{\gamma^{-1}} S_{r(\gamma)} R_{\gamma} = S_{s(\gamma)}, \ \gamma \in \G. 
\]

By analogy we define the equivariant pseudodifferential operators $\presuper{\G}{\Psi^{\bullet}(\Xop)}$ on $\Xop$ coming from the left action of $\G$.
The equivariance condition in this case is given as in Definition \ref{Xop}, \emph{ii)} on p. \pageref{Xop}. 
\label{Def:bdypsdo2}
\end{Def}

The operators defined here are in each case families parametrized over the double $M$. 
We have to clarify what role the pseudodifferential operators defined on $\G_{\partial}$ play.

% exact sequence...

\begin{Prop}
We have the following exact sequence
\[
\xymatrix{
C_Y^{\infty}(M) \Psi^{\bullet}(\X)^{\G} \ar@{>->}[r] & \Psi^{\bullet}(\X)^{\G} \ar@{->>}[r]^{\R_Y^{\G}} & \Psi^{\bullet}(\G_{\partial})
}
\]

where $\R_Y^{\G}$ is a well-defined restriction of families $(S_x)_{x \in M} \mapsto (S_y)_{x \in Y}$. 
Here $C_Y^{\infty}(M)$ are the smooth functions on $M$ that vanish on $Y$. 
\label{Prop:bdypsdo}
\end{Prop}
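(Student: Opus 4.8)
The plan is to establish the exact sequence by analysing the restriction map $\R_Y^{\G}$ fiberwise and then checking equivariance and smoothness of the symbol data. First I would recall from Definition \ref{Def:bdypsdo} and Definition \ref{Def:bdypsdo2} that an element $S = (S_x)_{x \in M} \in \Psi^{\bullet}(\X)^{\G}$ is a smooth family with local symbols $a \colon q(\Omega) \to S^m(T^{\ast}W)$, and that the $\G$-equivariance condition $R_{\gamma^{-1}} S_{r(\gamma)} R_{\gamma} = S_{s(\gamma)}$ holds for all $\gamma \in \G$. The key geometric observation is that by Axiom \emph{ii)} of a boundary structure the charge map $p \colon \X \to Y$ is a surjective submersion and $\X$ restricted over $Y$ is, by the bibundle property, identified with (a space carrying the left) $\G_{\partial}$-action; more precisely the fibers $\X_x$ for $x \in Y$ are diffeomorphic to the $s$-fibers $(\G_{\partial})_{p(z)}$ of the boundary groupoid via the left $\G_{\partial}$-action, since the right $\G$-action on $\X$ over $Y$ is free and proper with quotient $Y$ and hence the $\G_\partial$-action is principal there. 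Thus restricting the family $(S_x)$ to $x \in Y$ and transporting along these identifications yields a family indexed by $Y$ of pseudodifferential operators on the fibers of $\G_{\partial}$.

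Next I would verify that this restricted family $\R_Y^{\G}(S) := (S_y)_{y \in Y}$ is well-defined (independent of the choices of chart and of the identification $\X_y \cong (\G_\partial)_{p(y)}$) and lands in $\Psi^{\bullet}(\G_{\partial})$. Well-definedness follows because any two such identifications differ by the action of an element of the isotropy, and the $\G$-equivariance of $S$ together with commutativity of the $\G$- and $\G_{\partial}$-actions (Axiom \emph{ii)}) forces the two restrictions to agree; more concretely, for $\gamma \in \G$ with $r(\gamma), s(\gamma) \in Y$, equivariance gives $R_{\gamma^{-1}} S_{r(\gamma)} R_{\gamma} = S_{s(\gamma)}$, which is precisely the statement that the restricted family descends to a well-defined $\G_{\partial}$-equivariant family, i.e. an element of $\Psi^{\bullet}(\G_{\partial})$. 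That the local symbols of $(S_y)$ inherit smoothness in the parameter is immediate from the smoothness of $a$ on $q(\Omega)$ and the fact that, near $Y$, one may choose the charts of $\X$ compatibly with charts of $\G_{\partial}$ (using that $Y \subset M$ is an embedded transversal hypersurface and the local structure $\X_x \cong F_{ij} \times \Rr_+^\ast$ from the proof of Theorem \ref{Thm:bdystruct}).

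Then I would identify the kernel of $\R_Y^{\G}$. An equivariant family $S$ lies in $\ker \R_Y^{\G}$ exactly when $S_y = 0$ for all $y \in Y$, i.e. when the local symbol $a(x)(y,\xi)$ vanishes for $x \in Y$. Since $Y = \{p_Y = 0\}$ for a boundary defining function (equivalently $Y$ is a smooth embedded hypersurface and $C_Y^{\infty}(M)$ is the ideal of functions vanishing on it), Taylor expansion / Hadamard's lemma in the transverse direction shows $a(x)(y,\xi) = \phi(x)\, b(x)(y,\xi)$ with $\phi \in C_Y^{\infty}(M)$ and $b$ again a smooth symbol family; globally this gives $S \in C_Y^{\infty}(M)\,\Psi^{\bullet}(\X)^{\G}$, and conversely any such product manifestly restricts to zero on $Y$ (note $C_Y^\infty(M)$ acts through the range map, i.e. as $f \circ q$, which is compatible with equivariance since $q$ is $\G$-equivariant). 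Finally, surjectivity of $\R_Y^{\G}$: given $T = (T_y)_{y\in Y} \in \Psi^{\bullet}(\G_{\partial})$, pull back its local symbols under $p$ and use a partition of unity subordinate to a cover of $M$ by the special charts of $\X$, extending trivially away from a tubular neighborhood of $Y$, then average or rather directly use the principal $\G$-bundle structure of $\X \to Y$ over $Y$ to produce a $\G$-equivariant family on all of $\X$ whose restriction is $T$; equivariance is arranged by construction since the symbol is pulled back along the $\G$-invariant map $p$ on the tubular neighborhood.

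The main obstacle I expect is the surjectivity step and, relatedly, the precise verification that the restricted family is genuinely $\G_{\partial}$-equivariant rather than merely a family of operators: one must carefully track how the free proper right $\G$-action on $\X|_Y$, the left $\G_{\partial}$-action, and the isotropy groups $\G_x^x$ interact, and ensure the local triviality $\G_F \cong F \times F \times G$ (from the standing assumptions) is compatible with the analogous triviality of $\G_\partial$ and of $\X$ over the corresponding faces; this bookkeeping, together with checking that the extension off the tubular neighborhood of $Y$ does not destroy equivariance, is where the real work lies, whereas the Hadamard-lemma identification of the kernel is routine.
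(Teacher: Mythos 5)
Your proposal is correct and follows essentially the same route as the paper: restrict the equivariant family to $Y$, observe that the $\G$-equivariance condition evaluated on arrows of $\G_{\partial}$ is exactly $\G_{\partial}$-equivariance, identify the kernel with families vanishing over $Y$, and extend boundary families to get surjectivity. The one simplification the paper makes is that the identification of $\X_{|Y}$ with $\G_{\partial}$ is the direct set-theoretic equality $q^{-1}(Y)=r^{-1}(Y)\cap s^{-1}(Y)=\G_Y^Y=\G_{\partial}$, so your principal-bundle/local-triviality detour is unnecessary; conversely, where the paper simply declares the non-equivariant exact sequence ``immediate,'' you supply the Hadamard-lemma and extension details.
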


\begin{proof}
First note that $\G_{\partial}$ acts (from the left and the right) on itself. Extend this action to the set of families
$(S_y)_{y \in Y}$ with $S_y \in \Psi^{\ast}((\G_{\partial})_{y})$. 
Invariance under this action is just the usual equivariance condition for pseudodifferential operators.
Together with the uniform support condition we therefore recover the class of pseudodifferential operators, denoted $\Psi^{\ast}(\G_{\partial})$. 

The exactness of the sequence
\[
\xymatrix{ 
C_Y^{\infty}(M) \Psi^{\bullet}(\X) \ar@{>->}[r] & \Psi^{\bullet}(\X) \ar@{->>}[r]^{\R_Y} & \Psi^{\bullet}(\X_{|Y}) 
}
\]

for the restriction operator $\R_Y$ defined by $\R_Y((S_x)_{x \in M}) = (S_y)_{y \in Y}$ is immediate. 
Here note that 
\[
\X_{|Y} = q^{-1}(Y) = r^{-1}(Y) \cap s^{-1}(Y) = \G_Y^Y = \G_{\partial}
\]

by assumption.

Note also that by the previous remarks $\Psi^{\ast}(\X_{|Y})^{\G_Y^Y} \cong \Psi^{\ast}(\G_{\partial})$. 
This furnishes the exact sequence of equivariant pseudodifferential operators with a well-defined restriction map $\R_Y^{\G}$ % change later: R_Y here _different_
\[
\xymatrix{
C_Y^{\infty}(M) \Psi^{\bullet}(\X)^{\G} \ar@{>->}[r] & \Psi^{\bullet}(\X)^{\G} \ar@{->>}[r]^-{\R_Y^{\G}} & \Psi^{\bullet}(\X_{|Y})^{\G_Y^Y} \cong \Psi^{\ast}(\G_{\partial}). 
}
\]
\end{proof}

\section{Compositions}
\label{comps}

In order to prove the main Theorem we first establish a Lemma about compositions of conormal distributions.
\begin{Lem}
The classes of extended Boutet de Monvel operators are closed under compositions induced by groupoid actions and convolution.
More precisely we have the following compositions:
\begin{align}
& \ast \colon I_{c}^{m_1}(\Xop, \Delta_Y) \times I_{c}^{m_2}(\X, \Delta_Y) \to I_{c}^{m_1 + m_2}(\G, \Delta_Y), \label{C1} \\
& \ast \colon I_{c}^{m_1}(\X, \Delta_Y) \times I_{c}^{m_2}(\Xop, \Delta_Y) \to \Psi^{m_1 + m_2}(\X)^{\G}, \label{C2} \\
& \ast \colon \Psi^{m_1}(\X)^{\G} \times I_{c}^{m_2}(\X, \Delta_Y) \to I_{c}^{m_1 + m_2}(\X, \Delta_Y), \label{C3} \\
& \ast \colon \Psi^{m_1}(\G) \times I_{c}^{m_2}(\G, \Delta_Y) \to I_c^{m_1 + m_2}(\G, \Delta_Y), \label{C4} \\
& \ast \colon I_c^{m_1}(\G, \Delta_Y) \times \Psi^{m_2}(\G) \to I_c^{m_1 + m_2}(\G, \Delta_Y), \label{C5} \\
& \ast \colon I_c^{m_1}(\G, \Delta_Y) \times I_c^{m_2}(\Xop, \Delta_Y) \to I_c^{m_1 + m_2}(\Xop, \Delta_Y), \label{C6} \\
& \ast \colon I_c^{m_1}(\X, \Delta_Y) \times \Psi^{m_2}(\G) \to I_c^{m_1 + m_2}(\X, \Delta_Y), \label{C7} \\
& \ast \colon \Psi^{m_1}(\G) \times I_c^{m_2}(\Xop, \Delta_Y) \to I_c^{m_1 + m_2}(\Xop, \Delta_Y), \label{C8} \\
& \ast \colon I_c^{m_1}(\Xop, \Delta_Y) \times \presuper{\G}{\Psi^{m_2}(\Xop)} \to I_c^{m_1 + m_2}(\Xop, \Delta_Y), \label{C9} \\
& \ast \colon I_c^{m_1}(\X, \Delta_Y) \times I_c^{m_2}(\G, \Delta_Y) \to I_c^{m_1 + m_2}(\X, \Delta_Y). \label{C10}
\end{align}

\label{Lem:conormal}
\end{Lem}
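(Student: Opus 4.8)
The plan is to reduce all ten composition laws to one statement about parametrized families of conormal distributions, prove it fibrewise, and then reinstate the parameter dependence, the equivariance and the support condition. By Proposition \ref{Prop:reduced} and Remark \ref{Rem:extops} every extended operator entering \eqref{C1}--\eqref{C10} is determined by its reduced kernel, and by Remark \ref{Rem:Schwartz} each product $\ast$ is an explicit fibered convolution: integrate, against a Haar system, the pullback along one of the structure maps $\mu,\mu^t,\mu_{\G}$ (or a source/range map) of the exterior product of the two reduced kernels. Thus each composition factors as \emph{pull the two kernels back to a common manifold over $M$}, \emph{multiply}, and \emph{push forward along the $\G$- or $\G_{\partial}$-fibres}.

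First I would assemble the functorial toolkit for conormal distributions adapted to this setting: (a) pullback $f^{\ast}\colon I^{m}(Z,\Sigma)\to I^{m'}(Z',f^{-1}\Sigma)$ along a submersion $f$ transversal to $\Sigma$; (b) the product $I^{m_1}(Z,\Sigma_1)\otimes I^{m_2}(Z,\Sigma_2)\to I^{m_1+m_2+c}(Z,\Sigma_1\cap\Sigma_2)$ when $\Sigma_1,\Sigma_2$ meet transversally; (c) fibre integration $g_{\ast}\colon I^{m}(Z,\Sigma)\to I^{m''}(Z'',g(\Sigma))$ along a fibration $g$ for which $\Sigma$ is a clean union of fibres over $g(\Sigma)$. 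The transversality needed to apply (a)--(c) is exactly what the boundary structure supplies: $\X_x\hookrightarrow\G_x$ is a transversal embedding with $\partial_{reg}\G^{\pm}_x=\X_x$ (axiom \emph{iv}), $\Delta_Y$ lies inside $\Delta_M$, the interior trivializations of axiom \emph{iii} reduce the generic part to the classical Boutet de Monvel computation, and over each face the local triviality $\G_F\cong F\times F\times G$ keeps the relevant diagonals transversal. I would check the order arithmetic once — by a dimension count, cross-checked against the classical Boutet de Monvel composition — to see that for type $0$ the orders simply add, matching \eqref{C1}--\eqref{C10}. Then I would carry out \eqref{C1} (potential $\circ$ trace $=$ singular Green) and \eqref{C2} (trace $\circ$ potential $=$ pseudodifferential on $\X$) in detail as representatives; the other eight share this structure, those involving a $\Psi(\G)$- or $\Psi(\X)^{\G}$-factor (that is, \eqref{C3}--\eqref{C5} and \eqref{C7}--\eqref{C9}) being easiest, since a pseudodifferential composition is pseudolocal and so cannot enlarge the conormal locus $\Delta_Y$, while \eqref{C6} and \eqref{C10} pair a $\Delta_Y$-conormal kernel on $\G$ with one on $\Xop$ resp.\ $\X$ and use $\Delta_Y\subset\Delta_M$ together with the same transversality.

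Next I would reinstate the remaining data. Smoothness of the composite family in $x\in M$ follows from smoothness of the Haar systems and of the structure maps together with the smooth-parameter versions of (a)--(c). Equivariance — needed whenever the target is $\Psi^{\bullet}(\X)^{\G}$ or the composite must stay a right $\X$- or left $\Xop$-operator — comes from the equivariance of the two factors and associativity of the $\G$- and $\G_{\partial}$-actions, by exactly the substitution-and-invariance argument of the proof of Proposition \ref{Prop:reduced}. Compactness of the reduced support of the composite follows from compactness of the reduced supports of the factors together with freeness and properness of the $\G$-action on $\X$ and properness of the charge maps (axiom \emph{ii}); this also makes the defining integrals converge and produce genuine compactly supported distributions.

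The main obstacle I anticipate is twofold. First, these distributions live on manifolds with corners, so the classical H\"ormander conormal calculus is not directly available; I would circumvent this by working on the dense interiors — where axiom \emph{iii} and the pair-groupoid structure make everything classical — and then using the explicit models $\G_F\cong F\times F\times G$ and the corresponding models for $\X,\Xop$ near each face to check that the resulting distribution extends conormally across the corners with the claimed order. Second, one must verify in each of the ten cases that the wave front sets of the two factors sit in the ``good position'' required for the product and the pushforward to be defined; this reduces to the transversality statements above, but the mixed cases \eqref{C6} and \eqref{C10}, where the two conormal loci ($\Delta_Y$ seen inside $\G$ and inside $\Xop$ or $\X$) meet, call for the most careful argument and are where I would concentrate the effort.
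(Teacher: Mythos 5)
Your proposal is correct and follows essentially the same route as the paper: write each product as a convolution of reduced kernels over the groupoid fibres, reduce to local fiber-preserving charts where the proper-support estimates let you invoke the classical composition theorem for conormal distributions (H\"ormander), and then verify smoothness in the base parameter, equivariance via the substitution argument of Proposition \ref{Prop:reduced}, and compactness of the reduced support. The only organizational difference is that the paper first disposes of three of the ten cases by the adjoint duality of Proposition \ref{Prop:conormal} \emph{ii)} (namely \eqref{C8}$\Leftrightarrow$\eqref{C7}, \eqref{C9}$\Leftrightarrow$\eqref{C3}, \eqref{C10}$\Leftrightarrow$\eqref{C6}) and then treats \eqref{C1}--\eqref{C3} as the representative computations, whereas you treat \eqref{C1}--\eqref{C2} directly; this is immaterial.
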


\begin{proof}

We have the equivalences \eqref{C8} $\Leftrightarrow$ \eqref{C7}, \eqref{C9} $\Leftrightarrow$ \eqref{C3} and \eqref{C10} $\Leftrightarrow$ \eqref{C6} by Prop. \ref{Prop:conormal}, \emph{ii)}. 
Since the argument in each case goes along the same lines we only treat the first 3 cases of compositions exemplarily.

% add 3 cases: (C8) P \cdot K, (C9) K \cdot S, (C10) T \cdot G
% and note that: (C8) <=> (C7), (C9) <=> (C3), (C10) <=> (C6)

\emph{i)} We consider first the case of the composition \eqref{C1}. 
Consider a family of extended trace operators $T = (T_x)_{x \in M}$ and extended potential operators $K = (K_x)_{x \in M}$. 
Denote the corresponding family of Schwartz kernels by $k_x^T \in I^{m_1}(\X_x \times \G_x, \X_x)$ as well as
$k_x^K \in I^{m_2}(\G_x \times \X_x^t, \X_x^t)$ for $x \in M$. 

We make the following computation involving an interchange of integration we still have to justify via a reduction to local coordinates.
Let $\gamma \in \G_x$ then
\begin{align*}
(K_x \cdot T_x) u(\gamma) &= \int_{\X_x^t} k_x^K(\gamma, z) (T_x u)(z) \,d\lambda_x^t(z) \\
&= \int_{\X_x^t} \int_{\G_x} k_x^K(\gamma, z) k_x^T(z, \eta) u(\eta) \,d\mu_x(\eta) \,d\lambda_x^t(z) \\
&= \int_{\G_x} k_x^{K \cdot T}(\gamma, \eta) u(\eta) \,d\mu_x(\eta).
\end{align*}

The kernels of the composition $k_x^{K \cdot T}$ would therefore take the form
\[
k_x^{K \cdot T}(\gamma, \eta) = \int_{\X_x^t} k_x^K(\gamma, z) k_x^T(z, \eta)\,d\lambda_x^t(z).
\]

This corresponds to the convolution of reduced kernels $k_K \ast k_T$ which is immediately defined from
the actions.

First we check the support condition of the composed operator. 
The reduced support is compact via the inclusion
\[
\supp_{\mu}(K \cdot T) \subset \mu_{\G}(\supp_{\lambda}(K) \times \supp_{\lambda^t}(T)).
\]

Here the inversion of elements in the spaces $\X$ and $\X^t$ is performed inside the groupoid $\G$ where it is always defined.

Fix the projections
\[
p_1 \colon \X \times \G \to \X, \ p_2 \colon \X \times \G \to \G.
\]

Then by the uniform support condition the family $T = (T_x : x \in M)$ is in particular properly supported.
This means for compact sets $K_1 \subset \G, \ K_2 \subset \X$ we have that
\[
p_i^{-1}(K_i) \cap \supp(k_T) \subset \X \times \G, i = 1,2
\]

is compact. We make use of this property for the following argument.

Next we check the smoothness property of compositions. 
Let $f \in C_c^{\infty}(\G)$ be given, we will show that $Tf \in C_c^{\infty}(\X)$.
Assume that $T = (T_x : x \in M)$ has a Schwartz kernel $k_T$ contained in
$I^{-\infty}(\X, \Delta_Y) = \bigcap_m I^m(\X, \Delta_Y)$. 
Then $k_T$ is $C^{\infty}$ on the closed subset $\{(z, \gamma) : q(z) = s(\gamma)\}$ of $\X \times \G$ and
via fiber preserving diffeomorphisms we obtain a $C^{\infty}$-atlas.
The function $T f$ yields a smooth function because we integrate the kernels $k_x^T$ which are smooth functions.
Hence we can interchange integration and differentiation.
Therefore $Tf \in C_c^{\infty}(\X)$ for $k_T \in I^{-\infty}(\X, \Delta_Y)$. 

Consider a general extended trace operator $T$.
Let $(\gamma, z) \in \G_x \times \X_x$ and $\Omega \subset \G$ be a chart with fiber preserving diffeomorphism
$\Omega \sim s(\Omega) \times W$. We can assume that $W \subset \Rr^n$ is convex, open, $0 \in W$ and that $\Omega$ is a neighborhood 
of $\gamma$ such that $(x, 0)$ gets mapped to $\gamma$ via the diffeomorphism. 
We also set $\tilde{\Omega} = \Omega \cap \X \subset \X$ and $\tilde{W} = W \cap \Rr^{n-1}$ with a fiber preserving
diffeomorphism $\tilde{\Omega} \sim q(\tilde{\Omega}) \times \tilde{W}$ (recall the fact that $q = s_{|\G_M^Y}$ by assumption).
By the previous remarks the family $T$ is properly supported, which implies in particular that each $T_x$ is properly supported for $x \in M$.
Hence we obtain that the kernels $k_x^T$ of $T$ satisfy the support estimate
\begin{align*}
& p_1^{-1}\left(q(\tilde{\Omega}) \times \frac{\tilde{W}}{2}\right) \cap p_2^{-1}\left(s(\Omega) \times \frac{W}{2}\right)
\cap \overline{\bigcup_x \supp(k_x^T)} \subset \left(q(\tilde{\Omega}) \times \frac{3\tilde{W}}{2} \right) \times \left(s(\Omega) \times \frac{3W}{2}\right)
\end{align*}

therefore the fact that $Tf \in C_c^{\infty}(\X)$ reduces to a computation in local coordinates.

Similar reasoning applies to potential operators. Using the same argument as above we deduce that $K f \in C_c^{\infty}(\G)$ for $f \in C_c^{\infty}(\X)$ if
$K$ is smoothing.
For a general $K$ we note that each $K_x$ is properly supported for $x \in M$ and hence we obtain that the kernels $k_x^K$ of $K$ satisfy the support estimate
\begin{align*}
& p_1^{-1}\left(q(\tilde{\Omega}) \times \frac{\tilde{W}}{2} \right) \cap p_2^{-1}\left(s(\Omega) \times \frac{W}{2} \right) \cap \overline{\bigcup_{x} \supp(k_x^K)} \subset \left(q(\tilde{\Omega}) \times \frac{3 \tilde{W}}{2} \right) \times \left(s(\Omega) \times \frac{3W}{2}\right).
\end{align*}

The smoothness of $K f$ reduces to a computation in local coordinates.
Consider the general composition $K \cdot T$ for $T$ an arbitrary extended trace operator and $K$ an arbitrary extended potential operator. 
Then make the suitable support estimates as above to show that $K_x \cdot T_x$ are compositions of smooth families of conormal distributions which act on the sets
$W \subset \Rr^n, \ \tilde{W} \subset \Rr^{n-1}$.
It then follows from a general theorem of H\"ormander about compositions of conormal distributions, see \cite{HIV}, Thm. 25.2.3, p. 21 that the composition $T \cdot K$
yields a family of operators each of which is a conormal distribution in the sense of Definition \ref{Def:operators}, \emph{iii)}. 

We therefore obtain a properly supported family $K \cdot T$ which is by the above argument uniformly supported since $K$ and $T$ are each uniformly supported.
Finally, we check the equivariance property for the new family of operators $G = K \cdot T$. 
Since $(R_{\gamma})^{-1} = R_{\gamma^{-1}}$ we can write the equivariance condition from Definition \ref{Xop} in the form
\[
T_{r(\gamma)} R_{\gamma} = R_{\gamma} T_{s(\gamma)}, \ \forall \ \gamma \in \G. 
\]

Since $K$ is equivariant with regard to the right action of $\G$ on $\X$ by Remark \ref{Rem:extops}, \emph{i)}, the equivariance condition for $K$ reads
\[
R_{\gamma^{-1}} K_{r(\gamma)} = K_{s(\gamma)} R_{\gamma^{-1}}, \ \forall \ \gamma \in \G.
\]

For $\gamma \in \G$ we calculate
\begin{align*}
& R_{\gamma^{-1}} (K \cdot T)_{r(\gamma)} R_{\gamma} = R_{\gamma^{-1}} (K_{r(\gamma)} \cdot T_{r(\gamma)}) R_{\gamma} = R_{\gamma^{-1}} K_{r(\gamma)} R_{\gamma} T_{s(\gamma)} \\ 
&= K_{s(\gamma)} R_{\gamma^{-1}} \cdot R_{\gamma} T_{s(\gamma)} = K_{s(\gamma)} ((R_{\gamma})^{-1} \cdot R_{\gamma}) T_{s(\gamma)} = K_{s(\gamma)} \cdot T_{s(\gamma)} \\
&= (K \cdot T)_{s(\gamma)} 
\end{align*}

and hence $K \cdot T$ has the required equivariance property with regard to the right action of the groupoid $\G$. 
We have thus verified all the properties of an extended Green operator. 

\emph{ii)} Consider the next composition $T \cdot K \colon C_c^{\infty}(\X) \to C_c^{\infty}(\X)$ which is again
for $z \in \X$ and $u \in C_c^{\infty}(\X)$ given by
\begin{align*}
(T_x \cdot K_x) u(z) &= \int_{\G_{q(z)}} k_x^T(z, \gamma) (K_x u)(\gamma)\,d\mu_x(\gamma) \\
&= \int_{\G_x} \int_{\X_x^t} k_x^T(z, \gamma) k_x^K(\gamma, w) u(w) \,d\lambda_x^t(w) \,d\mu_x(\gamma) \\
&= \int_{\X_x^t} k_x^{T \cdot K}(z, w) u(w) \,d\lambda_x^t(w). 
\end{align*}

The kernel $k_x^{T \cdot K}$ is written
\[
k_x^{T \cdot K}(z, w) = \int_{\G_x} k_x^T(z, \gamma) k_x^K(\gamma, w) \,d\mu_x(\gamma). 
\]

% support conditions?
% use isomorphism \Psi^{\G} \cong \Psi(\G_{\partial})
% --> right support condition on \Psi(\X)^{\G} intrinsic
% rest simple inclusion

Now we can argue again analogously to \emph{i)} that the composition has the right support condition and via a reduction to local charts the formal computation can be made precise. 
We therefore obtain a family of kernels $k_x^{T \cdot K} \in I^{m_1 + m_2}(\X_x^t \times \X_x, \Delta_{\X_x})$. 

\emph{iii)} The third composition $S \cdot T \colon C_c^{\infty}(\G) \to C_c^{\infty}(\X)$ gives a family of 
extended trace operators. 
We obtain for $z \in \X, u \in C_c^{\infty}(\G)$
\begin{align*}
(S_x \cdot T_x)u(z) &= \int_{\X_x} k_x^S(z, w) (T_x u)(w) \,d\lambda_x(w) \\
&= \int_{\X_x} \int_{\G_x} k_x^S(z, w) k_x^T(w, \gamma) u(\gamma) \,d\mu_x(\gamma) \,d\lambda_x(w) \\
&= \int_{\G_x} k_x^{S \cdot T}(z, \gamma) u(\gamma)\,d\mu_x(\gamma).
\end{align*}

We obtain the kernel
\[
k_x^{S \cdot T}(z, \gamma) = \int_{\X_x} k_x^S(z, w) k_x^T(w, \gamma)\,d\lambda_x(w). 
\]

We proceed by making the analogous argument as in \emph{i)}, \emph{ii)}. The right support condition holds on $\Psi(\X)^{\G}$ e.g. via the identification
from \ref{Prop:bdypsdo}. The rest of the reasoning then yields a family of kernels
$k_x^{S \cdot T} \in I^{m_1 + m_2}(\X_x \times \G_x, \Delta_{\X_x})$ with the correct support condition. 
\end{proof}

% Continuity on Sobolev spaces now easy
% Use T K = S continuous and T* = K

% Proposition: conormal distributions on groupoids and actions via normal fibrations from section \ref{repr}.

% Example: Calculation of the composition S \cdot T 

\section{The calculus}

\subsection{Quantization}

Let us denote by $\B_{prop}^{m,0}(M_0, Y_0)$ the properly supported extended Boutet de Monvel operators of order $m$, defined on the interior.
In this section we introduce an algebra of extended operators $\B_{2\V}^{0,0}(M, Y)$ on the double Lie manifold $M$. 
This is defined by extending the distributional kernels in $\B_{prop}^{0,0}(M_0, Y_0)$ to take the Lie structure into account.
We also fix the actions with corresponding notation from \ref{actions}. Introduce the singular normal bundles for the inclusions $\Delta_Y \hookrightarrow \X, \ \Delta_Y \hookrightarrow \Xop$ as well as 
$\Delta_Y \hookrightarrow \G$:
\[
\N^{\X} \Delta_Y \to Y, \ \N^{\Xop} \Delta_Y \to Y, \ \N^{\G} \Delta_Y \to Y.
\]
Restricted to the interior we have by axiom \emph{iii)} in Def. \ref{Def:bdystruct} the isomorphisms
\begin{align}
& \N^{\X} \Delta_Y|_{Y_0} \cong N^{Y_0 \times M_0} \Delta_{Y_0}, \ \N^{\Xop} \Delta_Y|_{Y_0} \cong N^{M_0 \times Y_0} \Delta_{Y_0}, \ \N^{\G} \Delta_Y|_{Y_0} \cong N^{M_0 \times M_0} \Delta_{Y_0}. \label{normint}
\end{align}
Here we denote by $N^{Y_0 \times M_0} \Delta_{Y_0}$ the normal bundle to the inclusion $\Delta_{Y_0} \hookrightarrow Y_0 \times M_0$ and the same for
the others. It is not hard to see that $\N^{\X} \Delta_Y$ can be identified with $\A_{|Y}$ which is isomorphic to $A_{\partial} \oplus \N$. 

\begin{Rem}
\emph{i)} On the singular normal bundles we define the H\"ormander symbols spaces $S^m(\N^{\X} \Delta_Y^{\ast}) \subset C^{\infty}(\N^{\X} \Delta_Y^{\ast})$
such that for $U \subset Y$ open with
\[
\N^{\X} \Delta_Y^{\ast}|_{U} \cong U \times \Rr^{n-1} \times \Rr, \ K \subset U \ \text{compact}.
\]
We have the estimates for $t \in S^m(\N^{\X} \Delta_Y^{\ast})$
\[
|D_{x'}^{\alpha} D_{\xi', \xi_n}^{\beta} t(x', \xi)| \leq C_{K, \alpha, \beta} \ideal{\xi}^{m- |\beta|}, \ (x, \xi) \in K \times \Rr^{n-1} \times \Rr
\]
for each $\alpha \in \Nn_0^{n-1}, \ \beta \in \Nn_0^{n}$. Note that we have by H\"ormander's results a correspondence between the spaces of symbols on the normal bundle to a smooth manifold
and conormal distributions on the space (at least in the smooth case, cf. \cite{HIII}, Thm 18.2.11):
\[
I^{L}(\X, \Delta_Y) / I^{-\infty}(\X, \Delta_Y) \cong S^{m}(\N^{\X} \Delta_Y^{\ast}) / S^{-\infty}(\N^{\X} \Delta_Y^{\ast})
\]

where $L$ is the obligatory correction of order
\[
m = L - \frac{1}{4} \dim \mathcal{X} + \frac{1}{2} \dim \Delta_Y.
\]

We will ignore this order convention in the following discussions.
Note that our earlier definition of smooth families of operators defined as conormal distributions suggests immediately a quantization
which we state next. We can require additionally the (local) rapid decay property stated earlier, then we use the notation
$S_{\N}^{m}(\N^{\X} \Delta_Y^{\ast})$ for these symbol spaces.
Analogously the spaces 
\[
S_{\N}^m(\N^{\Xop} \Delta_Y^{\ast}) \subset C^{\infty}(\N^{\Xop} \Delta_Y^{\ast}), \ S_{\N}^m(\N^{\G} \Delta_Y^{\ast}) \subset C^{\infty}(\N^{\G} \Delta_Y^{\ast}).
\]

\emph{ii)} A second definition we will need is that of conormal distributions on the normal bundles themselves.
First given the normal and conormal bundles
\[
\pi \colon \N^{\X} \Delta_Y \to Y, \ \overline{\pi} \colon \N^{\Xop} \Delta_Y^{\ast} \to Y,
\]
(and analogously for $\N^{\Xop}, \ \N^{\G}$) define the \emph{fiberwise Fourier transform} $\Ff \colon S(\N^{\X} \Delta_Y) \to S(\N^{\X} \Delta_Y^{\ast})$
\[
\Ff(\varphi)(\xi) := \int_{\overline{\pi}(\zeta) = \pi(\xi)} e^{-i \scal{\xi}{\zeta}} \varphi(\zeta) \,d\zeta. 
\]
The inverse is given by duality
\[
\Ff^{-1}(\varphi)(\zeta) = \int_{\overline{\pi}(\zeta) = \pi(\xi)} e^{i \scal{\xi}{\zeta}} \varphi(\xi) \,d\xi, \ \varphi \in S(\N^{\X} \Delta_Y^{\ast}).
\]
Here we use the notation $S(\N^{\X} \Delta_Y), \ S(\N^{\X} \Delta_Y^{\ast})$ for the spaces of rapidly decreasing functions on the normal and conormal bundle respectively, see also \cite{S}, chapter 1.5. 
Then the spaces of conormal distributions are defined as:
\[
I^{m}(\N^{\X} \Delta_Y, \Delta_Y) := \Ff^{-1} S^m(\N^{\X} \Delta_Y^{\ast})
\]
and $I^{m}(\N^{\Xop} \Delta_Y, Y), \ I^{m}(\N^{\G} \Delta_Y, Y)$ analogously.
\label{Rem:conormal}
\end{Rem}

For the definition of the quantization rule we need some further notation. 
Let $0 < r \leq r_0$ where $r_0$ is the (positive) injectivity radius of $M$.
\begin{itemize}
\item First for the case of trace operators. We set 
\[
(\N^{\X} \Delta_Y)_r = \{v \in \N^{\X} \Delta_Y : \|v\| < r\}
\]
as well as
\[
I_{(r)}^m(\N^{\X} \Delta_Y, \Delta_Y) = I^m((\N^{\X} \Delta_Y)_r, \Delta_Y).
\]
Fix the restriction 
\[
\R \colon I_{(r)}^m(\N^{\X} \Delta_Y, \Delta_Y) \to I_{(r)}^m(N^{Y_0 \times M_0} \Delta_{Y_0}, \Delta_{Y_0}).
\]
We denote by $\Psi$ the normal fibration of the inclusion $\Delta_{Y_0} \hookrightarrow Y_0 \times M_0$
such that $\Psi$ is the local diffeomorphism mapping an open neighborhood of the zero section $O_{Y_0} \subset V \subset N^{Y_0 \times M_0} \Delta_{Y_0}$
onto an open neighborhood $\Delta_{Y_0} \subset U \subset Y_0 \times M_0$ (cf. \cite{S}, Thm. 4.1.1). 
Then we have the induced map on conormal distributions
\[
\Psi_{\ast} \colon I_{(r)}^m(N^{Y_0 \times M_0} \Delta_{Y_0}, \Delta_{Y_0}) \to I^m(Y_0 \times M_0, \Delta_{Y_0}).
\]
Also let $\chi \in C_c^{\infty}(\N^{\X} \Delta_Y)$ be a cutoff function which acts by multiplication 
\[
I^m(\N^{\X} \Delta_Y, \Delta_Y) \to I_{(r)}^{m}(\N^{\X} \Delta_Y, \Delta_Y).
\]
\item For potential operators we use the analogous notation:
\[
\R^t, \ \Psi^t, \ \F_{f}^t, \ \chi^t.
\]
\item Finally, in the singular Green case we have the induced normal fibration
\[
\Phi_{\ast} \colon I_{(r)}^m(N^{M_0 \times M_0} \Delta_{Y_0}, \Delta_{Y_0}) \to I^m(M_0 \times M_0, \Delta_{Y_0}).
\]
and the fiberwise Fourier transform
\[
\F_f^{\G} \colon I^m(\N^{\G} \Delta_Y, \Delta_Y) \to S^m(\N^{\G} \Delta_Y^{\ast}). 
\]
The restriction and cutoff is denoted by
\[
\R^{\G} \colon I_{(r)}^m(\N^{\G} \Delta_Y, \Delta_Y) \to I_{(r)}^m(N^{M_0 \times M_0} \Delta_{Y_0}, \Delta_{Y_0})
\]
and
\[
\varphi \colon I^m(\N^{\G} \Delta_Y, \Delta_Y) \to I_{(r)}^m(\N^{\G} \Delta_Y, \Delta_Y). 
\]
\end{itemize}

\begin{Def}[Quantization]
\emph{i)} Define
\[
q_{T, \chi} \colon S^m(\N^{\X} \Delta_Y^{\ast}) \to \Trace^{m,0}(M, Y) 
\]
such that for $t \in S^m(\N^{\X} \Delta_Y^{\ast})$ we have
\[
q_{T, \chi}(t) = \J_{tr} \circ q_{\Psi, \chi}(t)
\]
where
\[
q_{\Psi, \chi}(t) = \Psi_{\ast}(\R(\chi \Ff^{-1}(t))).
\]
\emph{ii)} Define 
\[
q_{K, \chi^t} \colon S^m(\N^{\Xop} \Delta_Y^{\ast}) \to \Pot^m(M, Y) 
\]
such that for $k \in S^m(\N^{\Xop} \Delta_Y^{\ast})$ we have 
\[
q_{k, \chi^t}(k) = \J_{pot} \circ q_{\Psi^t, \chi^t}(k).
\]
\emph{iii)} Define
\[
q_{G, \varphi} \colon S^m(\N^{\G} \Delta_Y^{\ast}) \to \G^{m,0}(M, Y)
\]
such that for $g \in S^m(\N^{\G} \Delta_Y^{\ast})$ we have
\[
q_{G, \varphi}(g) = \J_{pot} \circ q_{\Phi, \varphi}(g).
\]
\label{Def:quant}
\end{Def}

\begin{Prop}
The fibrations $q_{\Psi, \chi}, \ q_{\Psi^t, \chi^t}$ and $q_{\Phi,\varphi}$ define properly supported Schwartz kernels.
\label{Prop:proper}
\end{Prop}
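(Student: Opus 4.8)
The plan is to follow the support of the kernel through the four operations that define the quantization maps in Definition \ref{Def:quant} --- fiberwise Fourier inversion, multiplication by a cutoff, restriction to the interior, and push-forward by a normal fibration --- and to conclude that each of them produces a \emph{compactly supported} conormal distribution on $\X$ (respectively $\Xop$, $\G$). Properness of the induced families of Schwartz kernels on the groupoid fibers, and hence membership in the classes of extended operators of Definition \ref{Def:extops}, will then follow formally.

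First I would use that the singular normal bundles $\N^{\X}\Delta_Y$, $\N^{\Xop}\Delta_Y$ and $\N^{\G}\Delta_Y$ are vector bundles over the \emph{compact} manifold with corners $Y$, so that any closed ball bundle of radius $r\le r_0$ has compact total space. Since $\chi,\chi^t,\varphi$ lie in $C_c^\infty$ of these bundles and, by construction, implement the truncations into the $I_{(r)}^m$-spaces, the distributions $\chi\,\Ff^{-1}(t)$, $\chi^t\,\Ff^{-1}(k)$, and $\varphi$ applied to the fiberwise inverse transform of $g$, are compactly supported conormal distributions, with support contained in $\overline{(\N^{\X}\Delta_Y)_r}$ and its analogues. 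The restriction maps $\R,\R^t,\R^{\G}$ then only shrink the support while passing, via the interior identifications \eqref{normint}, to the corresponding normal bundles over the diagonals in $Y_0\times M_0$, $M_0\times Y_0$ and $M_0\times M_0$.

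The heart of the matter is the push-forward. Here $r\le r_0$, the injectivity radius, guarantees that the normal fibration $\Psi$ (and likewise $\Psi^t$, $\Phi$) is a genuine diffeomorphism from the $r$-ball around the zero section onto an $r$-neighbourhood of $\Delta_{Y_0}$, so $\Psi_\ast$ of a distribution supported in that ball is supported in its image. To see that this image is relatively compact \emph{in} $\X$, and not merely in $Y_0\times M_0$, I would invoke the tubular neighbourhood theorem for the embedded compact (transversal) submanifold $\Delta_Y\hookrightarrow\X$: its normal bundle is $\N^{\X}\Delta_Y\cong\A_{|Y}$, and a global collar diffeomorphism between a neighbourhood of the zero section of $\N^{\X}\Delta_Y$ and a neighbourhood of $\Delta_Y$ in $\X$ restricts, by \eqref{normint}, to $\Psi$ on the interior. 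Consequently $q_{\Psi,\chi}(t)$ extends from $Y_0\times M_0$ to a compactly supported element of $I_c^m(\X,\Delta_Y)$, so that $\J_{tr}$ may legitimately be applied; the same argument applies verbatim to $q_{\Psi^t,\chi^t}$ on $\Xop$ and to $q_{\Phi,\varphi}$ on $\G$. Finally, compact support of the reduced kernel yields proper support of the associated fiberwise kernels: by the formulas of Remark \ref{Rem:Schwartz} one has $\supp(k_x^T)=\{(z,\gamma):z\cdot\gamma^{-1}\in\supp(k_T)\}$, which meets the preimage of any compact set under either coordinate projection in a compact set --- precisely the support estimates already exploited in the proof of Lemma \ref{Lem:conormal}.

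The step I expect to be the main obstacle is the compatibility asserted in the previous paragraph: that the interior normal fibration $\Psi$ used in Definition \ref{Def:quant} is the restriction of a collar of $\Delta_Y$ inside $\X$. Without this one only knows that $q_{\Psi,\chi}(t)$ is a compactly supported conormal distribution on the \emph{open} piece $Y_0\times M_0\cong\X_{|Y_0\times M_0}$, and some care is needed to promote it to an element of $I_c^m(\X,\Delta_Y)$ that is conormal up to the boundary at infinity. This rests on the identification $\N^{\X}\Delta_Y\cong\A_{|Y}$ and on the compatibility \eqref{normint} of normal bundles built into the boundary-structure axioms of Definition \ref{Def:bdystruct}; once it is in place, the remainder is routine support bookkeeping.
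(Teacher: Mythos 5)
Your argument is correct and follows essentially the same route as the paper's (very terse) proof: the cutoff $\chi$ forces the kernel into a compact ball bundle over the compact base $Y$, so $\chi\,\R(t)$ is properly supported, and the push-forward by the normal fibration therefore defines a properly supported Schwartz kernel. You supply considerably more detail than the paper does --- in particular the compatibility of $\Psi$ with a collar of $\Delta_Y$ inside $\X$ via \eqref{normint}, which the paper takes for granted --- but the underlying mechanism is identical.
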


\begin{proof}
Consider exemplarily the trace operators. 
Since $\chi \R(t)$ is properly supported we find that $q_{T,\chi}(t)$ defines a properly supported operator.
It is clear from the definition that $q_{T,\chi}(t) \colon C_c^{\infty}(M_0) \to C_c^{\infty}(Y_0)$ has the Schwartz kernel
$q_{\Psi, \chi}(t)$. 
\end{proof}

The following is easy to check. 
\begin{Prop}
The quantizations $q_{T, \chi}, \ q_{K, \chi}, \ q_{G, \chi}$ are in each case independent of the choice of cutoff functions up to smoothing errors.
\label{Prop:cutoff}
\end{Prop}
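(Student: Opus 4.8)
The plan is to reduce the statement to the elementary fact that a conormal distribution, once multiplied by a smooth function that vanishes in a neighbourhood of the conormal submanifold, becomes residual, and then to carry this residuality through the remaining operations in the quantization rule of Definition \ref{Def:quant}.

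First I would fix two admissible cutoffs for the trace case: functions $\chi_1,\chi_2\in C_c^\infty(\N^{\X}\Delta_Y)$, each supported in $(\N^{\X}\Delta_Y)_r$ and each identically equal to $1$ on some (possibly different) neighbourhood of the zero section $\Delta_Y$ --- this normalization near $\Delta_Y$ is precisely what makes $q_{T,\chi_i}$ a quantization of the full symbol rather than of a proper part of it. Then $\chi_1-\chi_2\in C_c^\infty(\N^{\X}\Delta_Y)$ vanishes identically near $\Delta_Y$. Since for $t\in S^m(\N^{\X}\Delta_Y^{\ast})$ the distribution $\Ff^{-1}(t)\in I^m(\N^{\X}\Delta_Y,\Delta_Y)$ has singular support contained in $\Delta_Y$, the product $(\chi_1-\chi_2)\,\Ff^{-1}(t)$ is a smooth, compactly supported function, hence an element of $I^{-\infty}(\N^{\X}\Delta_Y,\Delta_Y)$; compactness of $\supp(\chi_1-\chi_2)$ makes the normal-fibre rapid decay automatic, so it lies in the residual class with the extra decay property as well.

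Next I would propagate this through the remaining maps. The interior restriction $\R$, the pushforward $\Psi_\ast$ along the normal fibration (a local diffeomorphism onto a neighbourhood of $\Delta_{Y_0}$), and the Schwartz-kernel isomorphism $\J_{tr}$ all send the residual class into the residual class: $\R$ and $\Psi_\ast$ preserve smoothness of kernels, and $\J_{tr}$ carries $I_c^{-\infty}(\X,\Delta_Y)$ onto the smoothing operators in $\Trace^{-\infty,0}(M,Y)$ (Notation \ref{Not:reduced}, Remark \ref{Rem:Schwartz}). Composing,
\[
q_{T,\chi_1}(t)-q_{T,\chi_2}(t)=\J_{tr}\bigl(\Psi_\ast(\R((\chi_1-\chi_2)\,\Ff^{-1}(t)))\bigr)\in\Trace^{-\infty,0}(M,Y),
\]
which is the asserted independence up to smoothing errors. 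The potential case $q_{K,\chi^t}$ on $\Xop$ and the singular Green case $q_{G,\varphi}$ on $\G$ run verbatim after replacing $(\R,\Psi_\ast,\Ff^{-1})$ by $(\R^t,(\Psi^t)_\ast,(\F_f^t)^{-1})$, respectively $(\R^{\G},\Phi_\ast,(\F_f^{\G})^{-1})$, and invoking Proposition \ref{Prop:conormal} to identify the resulting reduced kernels inside $\Pot^{-\infty,0}(M,Y)$, respectively $\Green^{-\infty,0}(M,Y)$.

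The single point that needs a little care --- and the one I would flag as the main (mild) obstacle --- is checking that the output of multiplication by $\chi_1-\chi_2$ really lands in the residual \emph{ideal} of the extended calculus, i.e.\ that it is not merely smooth on the interior but also properly (in fact compactly) supported and equivariant, so that it qualifies as an element of $\Trace^{-\infty,0}(M,Y)$ (resp.\ $\Pot^{-\infty,0}$, $\Green^{-\infty,0}$) in the sense of Definition \ref{Def:extops}. Compact support of $\chi_1-\chi_2$ handles the support condition, and equivariance is inherited because $\R$, $\Psi_\ast$, $\J_{tr}$ are exactly the maps used to build the equivariant quantization in the first place; so this amounts to the bookkeeping already set up around Definition \ref{Def:extops} and Remark \ref{Rem:extops}.
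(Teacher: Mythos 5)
Your proposal is correct. The paper itself offers no proof here --- it merely asserts the statement is ``easy to check'' --- and your argument is exactly the standard one that is implicitly intended: since both cutoffs equal $1$ near the zero section, their difference vanishes in a neighbourhood of $\Delta_Y$, so multiplying the conormal distribution $\Ff^{-1}(t)$ (whose singular support lies in $\Delta_Y$) by $\chi_1-\chi_2$ produces a smooth, compactly supported kernel, and the linear maps $\R$, $\Psi_\ast$, $\J_{tr}$ (and their analogues in the potential and Green cases) carry this into the residual class. Your closing remarks on compact support and equivariance correctly address the only bookkeeping points that could otherwise be overlooked.
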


From the compactness of $M$ and $Y$ we can associate to each vector field in $2 \V$ respectively $\W$ a \emph{global flow}
\begin{align*}
& 2\V \ni V \mapsto \Phi_V \colon \Rr \times M \to M, \\
& \W \ni W \mapsto \Psi_W \colon \Rr \times Y \to Y.
\end{align*}

Then consider the diffeomorphisms evaluated at time $t = 1$
\[
\Phi(1, -) \colon M \to M \ \text{and} \ \Psi(1,-) \colon Y \to Y
\]

and fix the corresponding group actions on functions which we denote by
\begin{align*}
& 2\V \ni V \mapsto \varphi_V \colon C^{\infty}(M) \to C^{\infty}(M), \\
& \W \ni W \mapsto \psi_W \colon C^{\infty}(Y) \to C^{\infty}(Y). 
\end{align*}

The upshot of this is a definition of the suitable smoothing terms for our calculus which we state next. 
\begin{Def}
\emph{i)} The class of $\V$-trace operators is defined as
\[
\Trace_{2\V}^{m,0}(M, Y) := \Trace^{m,0}(M, Y) + \Trace_{2\V}^{-\infty, 0}(M, Y).
\]

Here $\Trace^{m,0}(M, Y)$ consists of the extended operators from the previous definition.
The residual class is defined as follows
\[
\Trace_{2\V}^{-\infty, 0}(M, Y) := \mathrm{span}\{q_{\chi, T}(t) \varphi_{V_1} \cdots \varphi_{V_k} : V_j \in 2\V, \ \chi \in C_c^{\infty}(\A_{\partial}), \ t \in S_{\N}^{-\infty}(\N^{\X} \Delta_Y^{\ast})\}.
\]

\emph{ii)} The class of $\V$-potential operators is defined in the same fashion
\[
\Pot_{2\V}^{m}(M, Y) := \Pot^{m}(M, Y) + \Pot_{2\V}^{-\infty}(M, Y)
\]

with residual class
\[
\Pot_{2\V}^{-\infty}(M, Y) := \mathrm{span}\{q_{\chi, K}(k) \psi_{W_1} \cdots \psi_{W_k} : W_j \in \W, \ \chi \in C_c^{\infty}(\A_{\partial}), \ k \in S_{\N}^{-\infty}(\N^{\Xop} \Delta_Y^{\ast})\}.
\]

\emph{iii)} Lastly, the class of $\V$-singular Green operators is defined as
\[
\Green_{2\V}^{m,0}(M, Y) := \Green^{m,0}(M, Y) + \Green_{2\V}^{-\infty, 0}(M, Y)
\]

with residual class
\[
\Green_{2\V}^{-\infty,0}(M, Y) := \mathrm{span}\{q_{\chi, G}(g) \psi_{V_1} \cdots \psi_{V_k} : V_j \in 2\V, \ \chi \in C_c^{\infty}(\A_{\partial}), \ g \in S_{\N}^{-\infty}(N^{\G} \Delta_Y^{\ast}\}.
\]

\emph{iv)} Then the calculus $\B_{2\V}^{m,0}(M, Y)$ of extended operators consists of matrices of the form
\[
A = \begin{pmatrix} P + G & K \\ T & S \end{pmatrix}
\]

for $P \in \Psi_{2\V}^{m}(M), S \in \Psi_{\W}^{m}(Y)$ and $G$ an extended singular Green operator, $K$ extended potential and $T$ extended trace operator.

%Hence the calculus $\B_{\V}^{m,0}(X, Y)$ consists of matrices 
%\[
%A = \begin{pmatrix} P_{+} + r^{+} G e^{+} & r^{+} K \\
%T e^{+} & S \end{pmatrix}
%\]

%for extended $\V$ Green operator $G$, extended $\V$ potential operator $K$ and extended $\V$ trace operator $T$ of orders $m$ and type $0$ as well as 
%$S \in \Psi_{\W}^{m}(Y)$ as defined in \ref{Prop:LieStruct}. 

\label{Def:extended}
\end{Def}

\subsection{Composition}

The restriction $\chi^+$ to the interior $\mathring{X}_0 := X_0 \setminus Y_0$ and the extension by zero operator $\chi^0$ are given on the manifold level by
\[
\xymatrix{
L^2(M_0) \ar@/^1pc/[r]^{\chi^{+}} & \ar@/-0pc/[l]^-{\chi^0} L^2(\mathring{X}_0) 
} 
\]

with $\chi^{+} \chi^{0} = \id_{L^2(\mathring{X}_0)}$ and $\chi^{0} \chi^{+}$ being a projection onto a subspace of $L^2(M_0)$.

On the groupoid level we use the same symbols since it will be clear from context which is meant.
So we define the operators
\[
\xymatrix{
L^2(\G) \ar@/^1pc/[r]^{\chi^{+}} & \ar@/-0pc/[l]^-{\chi^0} L^2(\G^{+}) 
} 
\]
with $\chi^{+} \chi^{0} = \id_{L^2(\G^{+})}$ and $\chi^{0} \chi^{+}$ being a projection onto a subspace of $L^2(\G)$.

\begin{Def}
The operator $P \in \Psi^{m}(\G)$ has the transmission property if the symbol $a \in S_{tr}^{m}(\A^{\ast})$.
Here the class of H\"ormander symbols $a \in S_{tr}^{m}(\A^{\ast})$ consists of families $a = (a_x)_{x \in M}$
such that each symbol $a_x$ has the transmission property with regard to $\X_x \subset \G_x$. 
In particular the operators $(\chi^{+} P \chi^{0})_x$ map functions smooth up to the boundary $\X_x$ to functions
which have the same property. 
\label{Def:tprop}
\end{Def}

\begin{Exa}
\begin{itemize}
 \item Notice first that if $x \in M_0$ is an interior point we have that $\G_x \cong M_0$ and we recover the transmission property
on the interior manifold $X_0$ with boundary $Y_0$.

\item In our trivial case $\G = M \times M$ and $M = 2X$, $X$ a compact manifold with boundary $\partial X = Y$ we recover the transmission property
      meaning $\Psi_{tr}^{m}(M) \cong \Psi_{tr}^{m}(\G)$.

\end{itemize}
\end{Exa}

\begin{Not}
The operation of \emph{truncation} itself is given as a linear operator.

\emph{i)} On the groupoid calculus this operator is given by
\[
\End\begin{pmatrix} C_c^{\infty}(\G) \\ C_c^{\infty}(\X) \end{pmatrix} \supset \B^{m,0}(\G, \X) \ni A = \begin{pmatrix} P + G & K \\ T & S \end{pmatrix} \mapsto \tilde{\C}(A) = \begin{pmatrix} \chi^{+} (P + G) \chi^{0} & \chi^{+} K \\
T \chi^{0} & S \end{pmatrix} \in \End\begin{pmatrix} C_c^{\infty}(\G^{+}) \\ C_c^{\infty}(\X) \end{pmatrix}. 
\]

\emph{ii)} On the extended calculus we define
\[
\End\begin{pmatrix} C_c^{\infty}(M_0) \\ C_c^{\infty}(Y_0) \end{pmatrix} \supset \B_{2\V}^{m,0}(M, Y) \ni A = \begin{pmatrix} P + G & K \\ T & S \end{pmatrix} \mapsto \C(A) = \begin{pmatrix} \chi^{+} (P + G) \chi^{0} & \chi^{+} K \\ T \chi^{0} & S \end{pmatrix} \in \End\begin{pmatrix} C_c^{\infty}(X_0) \\ C_c^{\infty}(Y_0) \end{pmatrix}.
\]
\end{Not}

\begin{Def}
\emph{i)} The Boutet de Monvel calculus on the boundary structure is defined as the set of operators for $m \leq 0$
\[
\B^{m,0}(\G^{+}, \X) := \tilde{\C} \circ \B^{m,0}(\G, \X).
\]

\emph{ii)} The class of Boutet de Monvel operators on the Lie manifold with boundary is for $m \leq 0$ defined as
\[
\B_{\V}^{m,0}(X, Y) := \C \circ \B_{2\V}^{m,0}(M, Y).
\]

The \emph{vector representation} $\tilde{\varrho}_{BM}$ is defined by
\[
A\begin{pmatrix} \varphi \circ r \\ \psi \circ r_{\partial} \end{pmatrix} = \left(\tilde{\varrho}_{BM}(A) \begin{pmatrix} \varphi \\ \psi \end{pmatrix}\right) \circ \begin{pmatrix} r \\ r_{\partial} \end{pmatrix}  
\]
for $A \in \B^{m,0}(\G^{+}, \X)$ and $\varphi \in C_c^{\infty}(X_0), \ \psi \in C_c^{\infty}(Y_0)$.
\label{Def:restricted}
\end{Def}

\begin{Proof}[of Theorem \ref{Thm:BM}]
Let $A, B \in \B^{0,0}(\G^{+}, \X)$, then $A = (A_x)_{x \in X}, \ B = (B_x)_{x \in X}$ are equivariant families of properly supported Boutet de Monvel operators 
the fibers $(\G_x^{+}, \X_x)$ which are smooth manifolds with boundary.
The composition $A \cdot B = (A_x \cdot B_x)_{x \in X}$ is on the fibers of the groupoids and the spaces $\X$ and $\Xop$. 
Using the support estimates in the proof of Lemma \ref{Lem:conormal} the calculation can be performed in local charts by reduction
to composition of properly supported operators of Boutet de Monvel type on smooth manifolds. \qed
\end{Proof}

\subsection{Representation theorem}

\begin{Thm}
Given a $\V$-boundary structure the previously defined vector representation $\varrho_{BM}$ furnishes the isomorphism
\[
\varrho_{BM} \circ \B^{m,0}(\G, \X) \cong \B_{2\V}^{m,0}(M, Y).
\]
\label{Thm:representation}
\end{Thm}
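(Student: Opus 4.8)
The plan is to prove the theorem block by block. Writing an element of $\B^{m,0}(\G,\X)$ as a matrix $\begin{pmatrix} P + G & K \\ T & S \end{pmatrix}$ with $P \in \Psi^m(\G)$ satisfying the transmission property, $S \in \Psi^m(\G_\partial)$, and $G, K, T$ extended singular Green, potential and trace operators, the map $\varrho_{BM}$ respects this decomposition. On the purely pseudodifferential parts of the $(1,1)$- and $(2,2)$-entries the assertion is exactly the representation theorem of Ammann, Lauter and Nistor applied to the Lie groupoids $\G \rightrightarrows M$ and $\G_\partial \rightrightarrows Y$ (cf. \cite{ALN}); so it remains to treat the extended trace, potential and singular Green parts and to check the compatibility of the pieces.

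First I would verify that $\varrho_{BM}$ is well defined: for $A \in \B^{m,0}(\G,\X)$, $f \in C^\infty(X)$ and $g \in C^\infty(Y)$, the function $A(f \circ r, g \circ r_\partial)$ is constant along the $r$- (resp.\ $r_\partial$-) fibres and hence descends to a pair on $(M, Y)$. This is a direct consequence of the equivariance built into Definitions \ref{Xop} and \ref{Def:extops}, using the reduced-kernel formulas of Remark \ref{Rem:Schwartz}; the computation is the same as in the proof of Proposition \ref{Prop:reduced}. Next I would identify the interior Schwartz kernels. Since $\G_{M_0} \cong M_0 \times M_0$, $\X_{|Y} = \G_\partial$ and $\X_{|Y_0 \times M_0} \cong Y_0 \times M_0$ (and likewise for $\Xop$) by the axioms of a boundary structure, a short computation with reduced kernels shows that the Schwartz kernel of $\varrho_{BM}(T)$ on $Y_0 \times M_0$ (and analogously of $\varrho_{BM}(K)$, $\varrho_{BM}(G)$ on $M_0 \times Y_0$, $M_0 \times M_0$) is the restriction to the interior of the reduced kernel $k_T \in I_c^m(\X, \Delta_Y)$ (resp.\ $k_K$, $k_G$). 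By Proposition \ref{Prop:conormal} this restriction is conormal to $\Delta_{Y_0}$, and using the identifications \eqref{normint} of the singular normal bundles together with the quantization of Definition \ref{Def:quant} one checks that it is precisely the interior kernel of an element of $\Trace_{2\V}^{m,0}(M,Y)$ (resp.\ $\Pot_{2\V}^m$, $\Green_{2\V}^{m,0}$); the residual terms of Definition \ref{Def:extended} coming from the flows $\varphi_V$, $\psi_W$ correspond on the groupoid side to translations by the groupoid elements integrating the sections of $2\V$, $\W$, which preserve equivariance. This gives $\varrho_{BM}(\B^{m,0}(\G,\X)) \subseteq \B_{2\V}^{m,0}(M,Y)$.

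Injectivity I would get from density of the interior: $\G_{M_0} \cong M_0 \times M_0$ is dense in $\G$, and $Y_0 \times M_0$, $M_0 \times Y_0$ are dense in $\X$, $\Xop$ (Theorem \ref{Thm:bdystruct} and Remark \ref{Rem:bdystruct}). If $\varrho_{BM}(A) = 0$, the previous step forces the reduced kernels $k_P, k_G, k_K, k_T$ and the kernel of $S$ to vanish on the respective interiors; being compactly supported conormal distributions — hence smooth off the (interior-dense) conormal locus, with symbols along $\Delta_Y$ determined by continuity from the interior — they vanish identically, so $A = 0$. For surjectivity one runs the construction backwards: an operator in $\B_{2\V}^{m,0}(M,Y)$ has interior kernels that are conormal to $\Delta_{Y_0}$ and of the quantized form, and because the quantization of Definition \ref{Def:quant} is built from the globally defined singular normal bundles $\N^\X \Delta_Y \cong \A_{|Y}$, $\N^\Xop \Delta_Y$, $\N^\G \Delta_Y$, these extend — uniquely, by density — to compactly supported conormal distributions on $\X$, $\Xop$, $\G$ conormal to $\Delta_Y$, i.e.\ to reduced kernels of extended trace, potential and singular Green operators in the sense of Notation \ref{Not:reduced}; equivariance is automatic because the extension is forced by the $\G$-action. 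Combined with the Ammann--Lauter--Nistor theorem for the pseudodifferential blocks, this produces a preimage, and the well-definedness together with the kernel identification show it maps to the given operator.

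The main obstacle is the simultaneous control of the boundary behaviour needed for injectivity and surjectivity: one must show that an equivariant family on the groupoid is faithfully \emph{and} exhaustively encoded by its restriction to the interior of $M$, i.e.\ that passing from a conormal distribution on $M_0 \times M_0$ (resp.\ $Y_0 \times M_0$, $M_0 \times Y_0$) to a conormal distribution on $\G$ (resp.\ $\X$, $\Xop$) conormal to $\Delta_Y$ is a bijection onto the reduced kernels of the calculus. This relies on the density of the interior orbit established in Section \ref{BdyStruct} and on the fact that the residual classes of Definition \ref{Def:extended} were designed, via the flows $\varphi_V$, $\psi_W$, to match the residual ideal of the groupoid calculus exactly; making this matching rigorous — in particular that no residual extended operator is missed and that the flow-translations are inner in $\G$ and $\G_\partial$ — is the technical heart of the argument.
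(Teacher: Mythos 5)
The paper does not actually carry out this proof: its entire argument is the sentence ``similar to the proof of Theorem 3.2 in [ALN], see [B, Theorem 7.9]''. Your sketch fills in exactly the Ammann--Lauter--Nistor strategy the author is pointing to (block decomposition, reduced kernels, interior restriction, matching of quantizations and residual classes), so in spirit you are on the intended route. Two points, however, deserve correction.

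First, the theorem as stated (and as used later) is an \emph{image} statement: $\varrho_{BM}\bigl(\B^{m,0}(\G,\X)\bigr)=\B_{2\V}^{m,0}(M,Y)$. The introduction explicitly warns that $\varrho_{BM}$ is \emph{not} injective in general; injectivity is invoked only later, under the extra Hausdorff/amenability hypotheses, with a separate citation. Your injectivity paragraph rests on density of $M_0\times M_0$ in $\G$ and of $Y_0\times M_0$ in $\X$, but that density is \emph{not} an axiom of a boundary structure -- it is verified in Section \ref{BdyStruct} only for the concrete examples $\V_b,\V_{c_l},\V_\pi$ (where $\X=\overline{Y_0\times M_0}^{\G}$ is part of the construction). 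So this part of your argument proves more than the theorem claims while needing more than the theorem assumes; it should be dropped or explicitly conditioned on those hypotheses. Second, in the surjectivity step the phrase ``extend -- uniquely, by density -- to compactly supported conormal distributions on $\X$, $\Xop$, $\G$'' is not the right mechanism: a conormal distribution on the interior does not extend uniquely across the boundary faces (one can always add terms supported there), and existence of an extension is not automatic either. The correct argument, which you half-state, is that an element of $\B_{2\V}^{m,0}(M,Y)$ is \emph{by definition} $q_{\bullet,\chi}$ of a symbol living on the globally defined bundles $\N^{\X}\Delta_Y^{\ast}$, $\N^{\Xop}\Delta_Y^{\ast}$, $\N^{\G}\Delta_Y^{\ast}$ (plus residual terms), and one quantizes that \emph{same} global symbol on the groupoid side via the normal fibrations of $\Delta_Y\hookrightarrow\X,\Xop,\G$; the discrepancy is residual and is absorbed by the flow-generated residual classes of Definition \ref{Def:extended}. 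Phrased that way, no extension problem arises, and the ``technical heart'' you correctly identify reduces to matching the two residual ideals.
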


\begin{proof}
The proof is similar to the proof of Theorem 3.2. in \cite{ALN}. For further details we also refer to \cite[Theorem 7.9]{B}.
\end{proof}

\begin{Proof}[of Thm. \ref{Thm:BM2}]
From the right action of the groupoid $\G$ on $\X$ we obtain the induced action of the subgroupoid $\G^{+} \subset \G$.
The right action $\X_{|X} \ \rotatebox[origin=c]{90}{$\circlearrowright$}\ \G^{+}$ defines the equivariance of the families in the calculus $\B^{0,0}(\G^{+}, \X)$. 
The range and source maps in $\G^{+}$ are the restrictions of the range and source maps in $\G$, hence the representation $\tilde{\varrho}_{BM}$ is well-defined.
Thereby we obtain that the truncation maps are linear, equivariant maps and the following diagram commutes as linear operators
\[
\xymatrix{
\B_{\V}^{m,0}(X, Y) & \ar[l]^{\tilde{\varrho}_{BM}} \B^{m,0}(\G^{+}, \X) \\
\B_{2\V}^{m,0}(M, Y) \ar@{->>}[u]^{\C} & \ar[l]^{\varrho_{BM}} \B^{m,0}(\G, \X) \ar@{->>}[u]^{\tilde{\C}}.
}
\]
Since $\C, \tilde{\C}$ and $\varrho_{BM}$ are surjective (by Theorem \ref{Thm:representation}) we obtain the surjectivity of $\tilde{\varrho}_{BM}$ as follows.
Let $B \in \B_{\V}^{m,0}(X, Y)$ then by surjectivity of $\varrho_{BM}$ and $\C$ we lift this to an element $\tilde{B} \in \B^{m,0}(\G, \X)$.
Then $A := \tilde{\C}(\tilde{B})$ is the required preimage. By commutativity we have 
\[
\tilde{\varrho}_{BM}(A) = (\tilde{\varrho}_{BM} \circ \tilde{\C})(\tilde{B}) = (\C \circ \varrho_{BM})(\tilde{B}) = B.
\]
Hence in this case $\tilde{\varrho}_{BM}$ is surjective.
It is also immediate that it is a well-defined homomorphism of algebras. This yields the closedness under composition. \qed
\end{Proof}

\textbf{Vector bundles}

Up until now we have only considered scalar operators of Boutet de Monvel type. 
It does only require minor modifications to consider operators acting on smooth sections of smooth vector bundles, see also \cite{NWX}.
To this effect let $E_1, E_2 \to X$ be smooth vector bundles on $X$ and $J_{\pm} \to Y$ smooth vector bundles on $Y$. 
We can pull back these bundles to $\G^{+}$ via $\tilde{E}_i := r^{\ast} E_i \to \G^{+}$. 
Similarly, the actions allow us to pull back the bundles $J_{\pm}$ to $\X$ and obtain $\tilde{J}_{\pm} \to \X$. 

It is not difficult to modify our construction for operators acting on the smooth sections such that 
$A \in B_{\V}^{0,0}(X, Y; E_1, E_2, J_{\pm})$ is a continuous linear operator
\[
A \colon \begin{matrix} C^{\infty}(X, E_1) \\ \oplus \\ C^{\infty}(Y, J_{+}) \end{matrix} \to \begin{matrix} C^{\infty}(X, E_2) \\ \oplus \\ C^{\infty}(Y, J_{-}) \end{matrix}. 
\]

Similarly, $A \in \B^{0,0}(\G^{+}, \X; \tilde{E}_i, \tilde{J}_{\pm})$ is a continuous linear operator
\[
A \colon \begin{matrix} C_c^{\infty}(\G^{+}, \tilde{E}_1) \\ \oplus \\ C_c^{\infty}(\X, \tilde{J}_{+}) \end{matrix} \to \begin{matrix} C_c^{\infty}(\G^{+}, \tilde{E}_2) \\ \oplus \\ C_c^{\infty}(\X, \tilde{J}_{-}) \end{matrix}. 
\]

\section{Properties of the calculus}

\label{properties}

The following sections mark the start of the analysis of the represented side of our calculus (the algebra $\B_{\V}^{0,0}(X, Y)$).
In order to limit the size of the paper and preserve its readability most proofs are referred to the comprehensive literature
on Boutet de Monvel's calculus, where these constructions have already been given and which need only be adapted to our setup in a straightforward manner.

% Schulze etc..
Set $r(\xi) := (1 + \|\xi\|^2)^{\frac{1}{2}}$, then close to the boundary (in a fixed tubular neighborhood) we define the symbol
\[
r_{-}^m(x, \xi', \xi_n) = \left(\varphi\left(\frac{\xi_n}{C \ideal{\xi'}}\right) \ideal{\xi'} - i \xi_n\right)^m
\]

for $m \in \Rr$, some constant $C > 0$ and $\varphi \in S(\Rr)$ such that $\varphi(0) = 1, \ \supp \F^{-1} \varphi \subset \Rr_{-}$.

Then $r_{-}(\xi)$ is an elliptic symbol and has the transmission property (cf. \cite{HS}, Proposition 1.3).

We construct a global classical symbol $a \in S_{cl}^1(\A^{\ast})$ as follows.
Fix a normal cover $\bigcup_{i=1}^{\infty} U_i = M$ of $M$ with local trivializations $\Psi_i \colon \A_{U_i} \cong U_i \times \Rr^n$. 

\begin{itemize}
 \item If $U_i \cap Y = \emptyset$ (an \emph{interior chart}) set $\Psi_i a(\xi) = r(\xi)$. 
 
 \item If $U_i \cap Y \not= \emptyset$ (a \emph{boundary chart}) set $\Psi_i a(\xi) = r_{-}(\xi)$. 
 
\end{itemize}

\begin{Def}
Let $a \in S_{cl}^1(\A^{\ast})$ be the order reducing symbol as defined above and set $R_{-} := [q(a^{\frac{1}{k}})^{-1}]^k$ for $k$ chosen sufficiently large
as in the proof of \cite[Corollary 4.3]{ALNV}.
\label{Def:reduction}
\end{Def}

We can use the same argument as in \cite[Proposition 1.7]{HS} to prove the following result.
\begin{Prop}
The operator $R_{+} \colon \chi^{+} R_{-}^m \chi^{0}$ extends to a linear isomorphism
\[
R_{+} \colon H_{\V}^m(X) \iso H_{\V}^{s-m}(X)
\]

for each $s \in \Rr$. 
\label{Prop:reduction}
\end{Prop}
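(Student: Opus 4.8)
The plan is to reduce the statement to the corresponding fact for the closed (non-boundary) Lie-Sobolev spaces, which is precisely \cite[Corollary 4.3]{ALNV} (or \cite[Proposition 1.7]{HS} in the classical setting), and then to extract the boundary isomorphism from it by the standard argument involving extension by zero and restriction. First I would recall that, by Definition \ref{Def:reduction}, the symbol $a \in S_{cl}^1(\A^\ast)$ is globally elliptic, invertible, and in every boundary chart equals $r_{-}(\xi)$, which has the transmission property by \cite[Proposition 1.3]{HS}; hence $R_{-}^m = [q(a^{1/k})^{-1}]^{km}$ lies in $\Psi_{2\V}^m(M)$, is elliptic and invertible, and its symbol has the transmission property, so $R_{+} = \chi^{+} R_{-}^m \chi^{0}$ is a well-defined truncated operator in the calculus $\Bc_{\V}^{m,(m)_{+}}(X,Y)$ acting on the half $X = X_{+}$.

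The core of the argument is then the following. On the double $M$, the operator $R_{-}^m$ extends to an isomorphism $H_{\V}^s(M) \iso H_{\V}^{s-m}(M)$ for all $s$, because it is an order reduction in the closed Lie pseudodifferential calculus of \cite{ALNV}; this is exactly \cite[Corollary 4.3]{ALNV}. The choice of $k$ large (as in loc. cit.) guarantees that the $k$-th root $a^{1/k}$ can be quantized so that the resulting operator, raised to the $k$-th power, agrees with the standard order reduction modulo lower order terms that are absorbed. Now, since the symbol of $R_{-}^m$ has the transmission property, $(R_{-}^m)_{+} := \chi^{+} R_{-}^m \chi^{0}$ maps $H_{\V}^s(X)$ continuously to $H_{\V}^{s-m}(X)$, and likewise $(R_{-}^{-m})_{+}$ maps in the reverse direction. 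One then composes: $(R_{-}^{-m})_{+} (R_{-}^m)_{+} - \id$ is, by the standard Boutet de Monvel composition calculus (Theorem \ref{Thm:BM}, applied in the represented calculus via Theorem \ref{Thm:representation}), a singular Green operator of order $-\infty$, hence compact, and one upgrades compactness to an exact inverse by noting that the leftover Green term can be incorporated: the precise mechanism is that $r_{-}$ is chosen with $\supp \F^{-1}\varphi \subset \Rr_{-}$, which forces the relevant plus-integral/Green contribution in $(R_{-}^{-m})_{+}(R_{-}^m)_{+}$ to vanish identically, so that $(R_{-}^{-m})_{+}$ is a genuine two-sided inverse of $R_{+}$, not merely a parametrix. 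This is the content of \cite[Proposition 1.7]{HS} and its proof carries over verbatim once the symbol classes and Sobolev spaces are replaced by their $\V$-analogues; the Lie structure enters only through the underlying symbol calculus, which has already been set up with exactly the same formal properties.

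The main obstacle I expect is the passage from compactness of the error term to its exact vanishing, i.e. verifying that with the one-sided support condition $\supp \F^{-1}\varphi \subset \Rr_{-}$ the composition $\chi^{+} R_{-}^{-m} \chi^{0} \chi^{+} R_{-}^m \chi^{0}$ really equals $\chi^{+} R_{-}^{-m} R_{-}^m \chi^{0} = \id$ on $H_{\V}^s(X)$, with no residual plus-integral term. In the classical case this is the familiar computation that an operator whose (co-)symbol extends holomorphically into the lower half-plane produces no jump across the boundary; in our setting one must check this fiberwise on the groupoid fibers $\G_x$ with boundary $\X_x$, using the transmission property of Definition \ref{Def:tprop} and the fiberwise structure, and then globalize via the representation theorem. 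The other routine-but-necessary point is confirming that the $k$-th-root construction of Definition \ref{Def:reduction} is compatible with the transmission property — i.e. that one can choose the quantization of $a^{1/k}$ so that every factor, and hence the product $[q(a^{1/k})^{-1}]^{km}$, has a symbol in $S_{tr}^{\bullet}(\A^\ast)$ — which follows because $r_{-}^{1/k}$ again has the transmission property and the remainder terms in the symbol composition are of sufficiently negative order to preserve it. With these two points in hand, the isomorphism assertion for each fixed $s \in \Rr$ is immediate, and one concludes exactly as in \cite[Proposition 1.7]{HS}.
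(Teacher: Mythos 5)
Your proposal is correct and follows essentially the same route as the paper, which simply invokes the argument of \cite[Proposition 1.7]{HS}: the one-sided support condition $\supp \F^{-1}\varphi \subset \Rr_{-}$ makes $R_{-}^{\pm m}$ minus-type operators, so truncation commutes exactly with composition and $(R_{-}^{-m})_{+}$ is a genuine two-sided inverse rather than a parametrix. You have in fact supplied more detail than the paper does, correctly identifying the two points (exact vanishing of the Green remainder, and compatibility of the $k$-th-root quantization with the transmission/minus-type structure) that the citation to \cite{HS} is meant to cover.
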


Denote by $\gamma_Y \colon C_c^{\infty}(M_0) \to C_c^{\infty}(Y_0)$ the restriction operator $f \mapsto f_{|Y_0}$.
Then $\gamma_Y \colon H_{2 \V}^{m}(M) \to H_{\W}^{m - \frac{1}{2}}(Y)$ is continuous for each $m \in \Rr$, see \cite[Theorem 4.7]{AIN}. 
We also fix the operator $\partial \in \Diff_{2\V}^1(M)$ which is supported in $[0, \epsilon) \times Y$ and near $Y$ coincides
with $\partial_t$ for $t \in (-\epsilon, \epsilon)$. 

\begin{Lem}
For $j \in \Nn_0$ with $s > j - \frac{1}{2}$ we obtain
\[
\partial_{+}^j \colon H_{\V}^s(X) \to H_{\V}^{s-j}(X)
\]

is continuous. 
\label{Lem:partial}
\end{Lem}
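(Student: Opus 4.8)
Here $\partial_{+}^{j}$ denotes the $j$-fold composite $(\partial_{+})^{j}$ of $\partial_{+}:=\chi^{+}\partial\chi^{0}$, first defined on compactly supported smooth functions and then extended by continuity once the stated bound is available. The plan is to localize near $Y$, reduce to a model half-space computation, and control the iteration by a regularity bookkeeping which is exactly the hypothesis $s>j-\tfrac12$.

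First I would use that $\partial\in\Diff_{2\V}^{1}(M)$ is supported in the collar $[0,\epsilon)\times Y\subset M_{0}$, where $Y$ is an embedded interior hypersurface and $2\V$ restricts to the full tangent bundle; hence near $\supp\partial$ the space $H_{\V}^{s}$ agrees, with equivalent norms, with the ordinary Sobolev space, $\chi^{0}$ is the ordinary extension by zero, and $\partial_{+}=r_{+}\partial_{t}e_{+}$ is the classical truncated normal derivative, $t$ denoting the normal variable. Choosing $\omega\in C_{c}^{\infty}(M_{0})$ supported in the collar and $\equiv 1$ near $\supp\partial$ one has $\partial u=\partial(\omega u)$ for all $u$, so $\partial_{+}^{j}u$ depends only on $u$ in the collar and vanishes off it; a finite partition of unity on $Y_{0}$ and local coordinates then reduce the claim to the model statement that $r_{+}\partial_{t}^{j}e_{+}$ is bounded $H^{s}(\Rr_{+}^{n})\to H^{s-j}(\Rr_{+}^{n})$ for $s>j-\tfrac12$, after which one patches.

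For the model statement I would use the elementary distributional identity, valid for $v\in C_{c}^{\infty}(\overline{\Rr_{+}^{n}})$ with $\gamma_{k}v:=(\partial_{t}^{k}v)|_{t=0}$,
\[
\partial_{t}^{j}(e_{+}v)=e_{+}(\partial_{t}^{j}v)+\sum_{k=0}^{j-1}\gamma_{k}v\otimes\delta_{0}^{(j-1-k)},
\]
whose boundary-layer terms, being supported on $\{t=0\}$, are killed by $r_{+}$; the same happens after every intermediate truncation, so $\partial_{+}^{j}v=r_{+}\partial_{t}^{j}v$ and hence $\|\partial_{+}^{j}v\|_{H^{s-j}(\Rr_{+}^{n})}\leq\|v\|_{H^{s}(\Rr_{+}^{n})}$. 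To pass from $C_{c}^{\infty}(\overline{\Rr_{+}^{n}})$ to all of $H^{s}(\Rr_{+}^{n})$ I would induct on $j$, using that $\partial_{+}\colon H^{\tau}(\Rr_{+}^{n})\to H^{\tau-1}(\Rr_{+}^{n})$ is bounded for $\tau>\tfrac12$: for such $\tau$ the difference $e_{+}v-\ell v$ between $e_{+}v$ and a bounded Sobolev extension $\ell v$ of $v$ is an $L^{2}$-function supported in $\{t\leq 0\}$, so $\partial_{t}(e_{+}v-\ell v)$ is supported there and dies under $r_{+}$, giving $\partial_{+}v=r_{+}\partial_{t}\ell v$ with the evident bound. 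For $s>j-\tfrac12$ each of the $j$ successive applications then acts on an $H^{s-k}(\Rr_{+}^{n})$ with $k\leq j-1$ and $s-k>\tfrac12$, so $(\partial_{+})^{j}$ is well defined and bounded into $H^{s-j}(\Rr_{+}^{n})$, and the identification with $r_{+}\partial_{t}^{j}$ persists because the traces $\gamma_{0},\dots,\gamma_{j-1}$ are continuous on $H^{s}(\Rr_{+}^{n})$ exactly when $s>j-\tfrac12$.

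The main obstacle is precisely this bookkeeping: since $\chi^{0}$ is extension by zero, each composition $\chi^{+}\partial\chi^{0}$ re-creates a jump in the next normal derivative, and one must check that the hypothesis keeps every intermediate iterate in a space on which the following extension-by-zero and differentiation are controlled and the singular layer is harmless — the induction above is the precise form of this. The remaining steps — coordinate-invariance of the model bound, summation over the partition of unity, equivalence of $H_{\V}^{s-j}$ with the ordinary norm in the interior collar of $Y_{0}$, and vanishing of $\partial_{+}^{j}u$ away from it — are routine and combine to give continuity of $\partial_{+}^{j}\colon H_{\V}^{s}(X)\to H_{\V}^{s-j}(X)$.
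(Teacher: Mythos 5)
Your proof takes a genuinely different route from the paper's. The paper disposes of the lemma in two lines: it records that $\chi^{0}\colon H_{\V}^{s}(X)\to H_{\V}^{s}(M)$ is continuous for $|s|<\tfrac12$ and then cites the transmission-property continuity theorem of Harutjunjan--Schulze ([HS, Theorem 1.14]) to conclude. You instead reconstruct the classical argument from scratch: the jump formula $\partial_{t}^{j}(e_{+}v)=e_{+}(\partial_{t}^{j}v)+\sum_{k<j}\gamma_{k}v\otimes\delta_{0}^{(j-1-k)}$, the observation that $r_{+}$ kills the layer terms, the comparison $e_{+}v-\ell v$ with a bounded extension to handle $\tau>\tfrac12$, and the induction showing that $s>j-\tfrac12$ is exactly what keeps every intermediate iterate in an $H^{\tau}$ with $\tau>\tfrac12$. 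This half-space analysis and the bookkeeping of the iteration are correct and in fact more informative than the paper's citation, since they make visible where the threshold $j-\tfrac12$ comes from and why the intermediate truncations $\chi^{0}\chi^{+}$ are harmless.

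There is, however, one concrete flaw in your localization step. You assert that the collar $[0,\epsilon)\times Y$ is contained in $M_{0}$ and that $H_{\V}^{s}$ agrees there, with equivalent norms, with the ordinary Sobolev space. This is false in the setting of the paper: by the standing assumption that the boundary faces of $Y$ are in bijection with those of $M$, the hypersurface $Y$ meets $\partial M$, so the collar reaches the singular hyperfaces and $H_{\V}^{s}$ is a degenerate (weighted) Sobolev space there, not the ordinary one. What saves the argument is not interiority but transversality of $Y$ to the faces: the normal vector field $\partial_{t}$ is a non-degenerate direction of $2\V$, so the jump-formula analysis is purely one-dimensional in $t$ and decouples from the degeneration tangent to $Y$. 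To make your reduction rigorous you must replace the appeal to ordinary Sobolev spaces by the Lie-manifold versions of the facts you use — continuity of $\chi^{0}$ on $H_{\V}^{s}$ for $|s|<\tfrac12$, existence of bounded extension operators $e_{s}$, and continuity of the traces $\gamma_{k}\colon H_{\V}^{s}\to H_{\W}^{s-k-\frac12}(Y)$ — all of which are available in \cite{AIN} and are used elsewhere in this section. With that substitution your argument goes through; without it the reduction to the Euclidean model half-space is not justified near $\partial Y$.
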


\begin{proof}
The extension by zero operator $\chi^{0} \colon H_{\V}^s(X) \to H_{\V}^s(M)$ is a priori continuous for $\frac{1}{2} > s > -\frac{1}{2}$. 
Since any differential operator has the transmission property we obtain that $\partial_{+}$ is continuous for $s > j - \frac{1}{2}$ by \cite[Theorem 1.14]{HS}. 
\end{proof}

\begin{Def}
Set for a given $f \in C^{\infty}(X)$ with support in $Y_{(\epsilon)}$ 
\[
(\gamma_j f)(x') := \lim_{t \to 0} (\partial_{x_n}^j f)(x', t), \ (x', x_n) \in Y_{(\epsilon)}. 
\]

Given $u \in C_c^{\infty}(Y_0)$ denote by $u \otimes \delta_Y \in \D'(M_0)$ the distribution given by
\[
\scal{u \otimes \delta_Y}{v} = \scal{u}{v(-, 0)} = \int_{Y_0} u(x') v_{|Y_0}(x') \,d\nu(x'), \ v \in C_c^{\infty}(M_0). 
\]
\label{Def:gammaj}
\end{Def}

\begin{Prop}
For $s > j + \frac{1}{2}$ we have
\[
\gamma_j \colon H_{\V}^s(X) \to H_{\W}^{s - j - \frac{1}{2}}(Y) 
\]

is continuous. 
 
\label{Prop:gammaj}
\end{Prop}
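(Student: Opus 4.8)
The plan is to realise the $j$-th normal trace $\gamma_j$ as the composition of the truncated differential operator $\partial_+^j$ of Lemma~\ref{Lem:partial} with the ordinary trace map $\gamma_Y$, so that the asserted continuity becomes the chaining of two mapping properties that are already on record. First I would reduce to functions supported in the collar $Y_{(\epsilon)}$: since $\gamma_j f$ only depends on $f$ in an arbitrarily small one-sided neighbourhood of $Y$, fix a cutoff $\chi\in C^\infty(X)$ with $\chi\equiv1$ near $Y$ and $\supp\chi\subset Y_{(\epsilon)}$; then $\gamma_j f=\gamma_j(\chi f)$, and multiplication by $\chi$ is bounded on $H_\V^s(X)$ (the $\V$-Sobolev spaces are $C^\infty(X)$-modules, cf.\ \cite{AIN}, \cite{ALNV}). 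On such functions, using Definition~\ref{Def:gammaj} and the fact that $\partial$ agrees with $\partial_{x_n}$ on $Y_{(\epsilon)}$, one has the identity
\[
\gamma_j(\chi f)=\gamma_Y\big(\partial_+^j(\chi f)\big),\qquad\partial_+^j=\chi^{+}\partial^{j}\chi^{0}:
\]
extending $\chi f$ by zero and differentiating $j$ times produces distributional terms carried by $Y$, but the subsequent restriction $\chi^{+}$ to $X\setminus Y$ discards them, so on the $X_{+}$ side $\partial_+^j(\chi f)$ is the classical derivative $\partial_{x_n}^j(\chi f)$, whose one-sided boundary value is precisely $\gamma_j(\chi f)$.

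With this identity in hand the estimate follows by composition. Since $s>j+\tfrac12>j-\tfrac12$, Lemma~\ref{Lem:partial} gives a continuous map $\partial_+^j\colon H_\V^s(X)\to H_\V^{s-j}(X)$; since $s-j>\tfrac12$, the trace theorem (\cite[Theorem~4.7]{AIN}, transferred from the double $M$ to $X$ by composing the boundedness $\gamma_Y\colon H_{2\V}^{s-j}(M)\to H_\W^{s-j-\frac12}(Y)$ with a bounded $\V$-Sobolev extension $H_\V^{s-j}(X)\to H_{2\V}^{s-j}(M)$) gives a continuous map $\gamma_Y\colon H_\V^{s-j}(X)\to H_\W^{s-j-\frac12}(Y)$. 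Composing these with multiplication by $\chi$, the map $\gamma_j$ factors as
\[
H_\V^s(X)\;\xrightarrow{\ f\mapsto\chi f\ }\;H_\V^s(X)\;\xrightarrow{\ \partial_+^j\ }\;H_\V^{s-j}(X)\;\xrightarrow{\ \gamma_Y\ }\;H_\W^{s-j-\frac12}(Y),
\]
so it is continuous, with $\|\gamma_j f\|_{H_\W^{s-j-\frac12}(Y)}\lesssim\|f\|_{H_\V^s(X)}$.

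The step I expect to be the main obstacle is the exact identification $\gamma_j(\chi f)=\gamma_Y(\partial_+^j(\chi f))$ for non-smooth $f$: one must check that the $Y$-supported terms generated by $\chi^{0}$ and $\partial^{j}$ genuinely drop out after $\chi^{+}$, and that the one-sided limit in Definition~\ref{Def:gammaj} coincides with $\gamma_Y$ applied to the truncated function. I would prove this first on $C^\infty(X)$, where both sides are classical one-sided limits, and then pass to all of $H_\V^s(X)$ by density, using the continuity bounds for $\partial_+^j$ and $\gamma_Y$ just recorded. A secondary technical point is the transfer of the trace theorem from $M$ to the manifold-with-boundary $X$, which rests on the existence of a bounded $\V$-Sobolev extension operator $H_\V^\bullet(X)\to H_{2\V}^\bullet(M)$; once that is granted, the argument is entirely formal.
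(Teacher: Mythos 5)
Your proof is correct and rests on the same decomposition the paper uses: realise $\gamma_j$ as $\gamma_Y$ composed with a $j$-fold normal derivative and chain the two known continuity statements. The one point where you diverge is the mechanism of the normal derivative. The paper writes $\gamma_j = \gamma_Y \circ \partial^j \circ e_s$, where $e_s \colon H_{\V}^s(X) \to H_{2\V}^s(M)$ is a genuine bounded extension operator to the double; since $e_s f$ is an honest Sobolev function on $M$, the operator $\partial^j$ acts on it without producing any distributional terms on $Y$, and the whole argument is three bounded maps composed. You instead keep everything on $X$ and use the truncated operator $\partial_+^j = \chi^{+}\partial^j\chi^{0}$ from Lemma \ref{Lem:partial}, which forces you to verify that the $Y$-supported terms created by extension by zero are killed by $\chi^{+}$ and that $\gamma_Y(\partial_+^j f)$ really is the one-sided limit of Definition \ref{Def:gammaj} — exactly the obstacle you flag, and which your density argument does handle, since $\partial_+^j f$ agrees with the classical derivative on the open interior and $\gamma_Y$ is the continuous extension of restriction from smooth functions. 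Both routes need the extension operator somewhere (yours uses it to transfer the trace theorem from $M$ to $X$); the paper's ordering simply avoids the extension-by-zero subtlety altogether, while yours leans on the already-recorded Lemma \ref{Lem:partial} and is equally valid under the hypothesis $s > j + \tfrac{1}{2}$, which is precisely what makes both $s > j - \tfrac12$ (for $\partial_+^j$) and $s - j > \tfrac12$ (for the trace) hold.
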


\begin{proof}
%Denote by $M_{\chi_{+}}$ the multiplication by a characteristic function of the set $X = X_{+}$. 
%Then this is a continuous operator for $-\frac{1}{2} < s < \frac{1}{2}$
%\[
%M_{\chi_{+}} \colon H_{2\V}^s(M) \to H_{2\V}^s(M). 
%\]

%For $-\frac{1}{2} < s < \frac{1}{2}$ fix the continuous extension operator
%\[
%e_s \colon H_{\V}^s(X) \to H_{2 \V}^s(M). 
%\]

%Setting $\alpha(u) := M_{\chi_{+}} e_s u$ we obtain
%\[
%r^{+} \alpha(u) = u \ \text{for} \ u \in C_c^{\infty}(X_0). 
%\]

%By the density of $C_c^{\infty}(X_0)$ it follows that $\alpha r^{+}$ yields the identity on Sobolev spaces.
%Hence we have found an isomorphism
%\[ 
%\alpha \colon H_{\V}^s(X) \iso H_{\V}^s(X). 
%\]
The differential operator
\[
\partial^j \colon H_{\V}^s(M) \to H_{\V}^{s - j}(M)
\]
is continuous.
Denote by $e_s \colon H_{\V}^s(X) \to H_{2 \V}^s(M)$ a continuous extension operator depending on $s$.
By the previous remarks
\[
\gamma_Y \colon H_{2\V}^{s-j}(M) \to H_{2 \V}^{s -j -\frac{1}{2}}(M)
\] 

is continuous provided $s > j - \frac{1}{2}$. 
In the fixed tubular neighborhood we can write $\gamma_j = \gamma_Y \circ \partial^j \circ e_s$. 
This proves the claim by the continuity of $\gamma_Y, \ \partial^j$ and $e_s$.
\end{proof}

We are now in a position to define the operators $\B_{\V}^{m,d}(X, Y)$ of arbitrary order $m \in \Rr$ and type $d \in \Nn_0$. 

\begin{Def}
\emph{i)} A \emph{trace operator} of order $m \in \Rr$ and type $d \in \Nn_0$ is defined as $T \in \Trace_{\V}^{m,d}(X, Y)$ if 
there is a sequence $T_j \in \Trace_{\V}^{m-j,0}(X, Y)$ such that
\[
T = \sum_{j=0}^{d} T_j \partial_{+}^j. 
\]

A residual trace operator of type $d \in \Nn_0$ is defined in the same way for a sequence $T_j \in \Trace_{\V}^{-\infty, 0}(X, Y), \ 0 \leq j \leq d$.

\emph{ii)} A \emph{singular Green operator} of order $m \in \Rr$ and type $d \in \Nn_0$ is defined as $G \in \Green_{\V}^{m,d}(X, Y)$
if there is a sequence $G_j \in \Green_{\V}^{m-j,0}(X, Y)$ such that
\[
G = \sum_{j=0}^{d} G_j \partial_{+}^j. 
\] 

A residual singular Green operator of type $d \in \Nn_0$ is defined in the same way for a sequence $G_j \in \Green_{\V}^{-\infty, 0}(X, Y), \ 0 \leq j \leq d$.
\label{Def:Bmd}
\end{Def}

\begin{Rem}
In \cite{RS}, p. 149 we have the following standard representations for elements in Boutet de Monvel's calculus.
A trace operator $T \in \Trace_{\V}^{m,0}(X, Y)$ takes the form $T = \gamma_Y \circ Q_{+}$ for a $Q \in \Psi_{2\V, tr}^m(M)$
and a potential operator $K \in \K_{\V}^m(X, Y)$ takes the form $K = \tilde{Q}_{+}(- \otimes \delta_Y)$ for a $\tilde{Q} \in \Psi_{2\V, tr}^m(M)$. 
We should remark that in our framework we can prove these properties in the same way. 
By an elementary but tedious computation we can show directly that the formal adjoint of a trace operator is a potential operator also in the
order $m > 0$ cases. 
\label{Rem:stdrepr}
\end{Rem}

\begin{Thm}
\emph{i)} A trace operator $T \in \Trace_{\V}^{m,d}(X, Y)$ extends to a continuous linear operator
\[
T \colon H_{\V}^s(X) \to H_{\W}^{s - m -\frac{1}{2}}(Y) 
\]

for $s > d - \frac{1}{2}$. 

\emph{ii)} A potential operator $K \in \Pot_{\V}^m(X, Y)$ extends to a continuous linear operator 
\[
K \colon H_{\W}^{s - \frac{1}{2}}(Y) \to H_{\V}^{s-m}(X) 
\]

for $s > -\frac{1}{2}$. 

\emph{iii)} A singular Green operator $G \in \Green_{\V}^{m,d}(X, Y)$ extends to a continuous linear operator 
\[
G \colon H_{\V}^s(X) \to H_{\V}^{s -m}(X)
\]

for $s > d - \frac{1}{2}$. 

\label{Thm:cont2}
\end{Thm}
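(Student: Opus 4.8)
The plan is to reduce all three statements to the case of type $0$ together with the mapping properties of the truncated differential operators $\partial_+^j$, and then to obtain the type $0$ continuity by transferring the classical Sobolev boundedness of Boutet de Monvel operators on a smooth compact manifold with boundary (\cite{G}, \cite{RS}) to the Lie setting via the representation theorem \ref{Thm:representation} and the $\V$-Sobolev machinery and order reductions of \cite{ALNV}, \cite{AIN}.

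\textbf{Reduction to type $0$.} By Definition \ref{Def:Bmd} every $T\in\Trace_\V^{m,d}(X,Y)$ is a finite sum $T=\sum_{j=0}^d T_j\partial_+^j$ with $T_j\in\Trace_\V^{m-j,0}(X,Y)$, and likewise $G=\sum_{j=0}^d G_j\partial_+^j$ with $G_j\in\Green_\V^{m-j,0}(X,Y)$. By Lemma \ref{Lem:partial}, $\partial_+^j\colon H_\V^s(X)\to H_\V^{s-j}(X)$ is continuous whenever $s>j-\frac12$. Hence, once parts (i) and (iii) are known in the type $0$ case, i.e. $T_j\colon H_\V^r(X)\to H_\W^{r-(m-j)-\frac12}(Y)$ and $G_j\colon H_\V^r(X)\to H_\V^{r-(m-j)}(X)$ continuous for $r>-\frac12$, the composition $T_j\partial_+^j$ (resp. $G_j\partial_+^j$) maps $H_\V^s(X)\to H_\W^{s-m-\frac12}(Y)$ (resp. $H_\V^s(X)\to H_\V^{s-m}(X)$) whenever $s>j-\frac12$, since then also $s-j>-\frac12$. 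Summing over $0\le j\le d$ gives the asserted bounds for all $s>d-\frac12$. It therefore remains to prove (ii) and the type $0$ cases of (i) and (iii).

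\textbf{The type $0$ case.} For (i) with $d=0$ use the structural representation $T=\gamma_Y\circ Q_+$ with $Q\in\Psi_{2\V,tr}^m(M)$ from Remark \ref{Rem:stdrepr}; for (ii) use $K=\tilde Q_+(\,\cdot\,\otimes\delta_Y)$ with $\tilde Q\in\Psi_{2\V,tr}^m(M)$ (equivalently, by Proposition \ref{Prop:conormal}(ii), $K$ is the formal adjoint of a trace operator); for (iii) note that a singular Green operator of order $m$ and class $0$ factors, up to residual terms, through a potential and a trace operator of the same order, or apply its symbol estimates on $\N^\G\Delta_Y^\ast$ directly. In each case the represented operator is, by Theorem \ref{Thm:representation}, an equivariant family on $\G^+$ whose fibres are interiors of smooth manifolds with boundary (axiom \emph{iv)} of the boundary structure); localizing with a normal partition of unity and applying a continuous extension $e_s\colon H_\V^s(X)\to H_{2\V}^s(M)$ as in the proof of Proposition \ref{Prop:gammaj}, the estimate reduces fibrewise to the classical continuity statements for trace, potential and singular Green operators of class $0$ on smooth manifolds with boundary, after which the $\V$-weighting and the order reductions of Definition \ref{Def:reduction}/Proposition \ref{Prop:reduction} and \cite{ALNV} let one read off continuity on the weighted spaces $H_\V^\bullet(X)$, $H_\W^\bullet(Y)$.

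\textbf{Main obstacle.} The genuinely delicate point is the threshold $s>-\frac12$ for the type $0$ operators: extension by zero $\chi^0$ across $Y$ is not bounded on $H_\V^s$ for general $s$, so one cannot naively compose $\gamma_Y$, $\chi^\pm$ and $\V$-pseudodifferential factors. The transmission property (Definition \ref{Def:tprop}) is exactly what makes $Q_+$, $\tilde Q_+$ and the truncated singular Green operators bounded down to $s>-\frac12$, via the argument of \cite[Theorem 1.14]{HS}, and one must check that this argument survives the passage from the model half-spaces to the $\V$-Sobolev spaces on $X$ and $M$. Once this is in place, the class $d$ statement follows mechanically from Lemma \ref{Lem:partial}, using that $s>d-\frac12$ forces $s-j>-\frac12$ for every $j\le d$.
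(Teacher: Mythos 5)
Your proof is correct, but it organizes the reduction to type $0$ differently from the paper. You carry the type in the truncated normal derivatives, writing $T=\sum_{j=0}^{d}T_j\partial_{+}^{j}$ straight from Definition \ref{Def:Bmd} and disposing of the factors $\partial_{+}^{j}$ with Lemma \ref{Lem:partial}; the paper instead invokes the equivalent representation from \cite{SS}, $T=T_0+\sum_{j=0}^{d-1}S_j\gamma_j$ with $S_j\in\Psi_{\W}^{m-j}(Y)$, so that the type is carried by the boundary restriction operators $\gamma_j$, whose continuity is Proposition \ref{Prop:gammaj}. Likewise for part \emph{iii)} the paper uses $G=\sum_{j=0}^{d-1}K_j\gamma_j+G_0$ together with the norm-convergent expansion $G_0=\sum_{j}\lambda_j\tilde{K}_j\tilde{T}_j$ into potential-times-trace compositions (\cite{RS}, Lemma 7); that series is the precise form of your looser statement that a type-$0$ singular Green operator ``factors through'' a potential and a trace operator --- it is a convergent sum of such factorizations, not a single one, and you should say so. What each approach buys: your route needs only the definition plus the mapping property of $\partial_{+}^{j}$, while the paper's route requires the nontrivial equivalence of the two representations of positive type but then hands the whole type-$d$ part to Proposition \ref{Prop:gammaj}. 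Both ultimately rest on the type-$0$ continuity of $T_0$ (equivalently of $Q_{+}=\chi^{+}Q\chi^{0}$ with the transmission property down to $s>-\tfrac12$), a point the paper's proof leaves implicit; your explicit identification of this threshold, and of the transmission property as the mechanism behind it, is the more careful account of where the analytic content actually lies. Part \emph{ii)} is handled identically in both arguments, via adjunction with a trace operator.
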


\begin{proof}
\emph{i)} Let $T \in \Trace_{\V}^{m,d}(X, Y)$ and decompose $T$ as follows
\[
T = T_0 + T_d
\]

where $T \in \Trace_{\V}^{m,0}(X, Y)$ and $T_d$ can be written 
\[
T_d = \sum_{j=0}^{d-1} S_j \gamma_j, \ S_j \in \Psi_{\W}^{m-j}(Y)
\]

(see e.g. \cite{SS} for the proof). 

Hence the continuity follows from Proposition \ref{Prop:gammaj}. 

\emph{ii)} This follows because $K$ is the formal adjoint of a trace operator $T$. 

\emph{iii)} By the same proof as in \cite{SS}, Lemma 2.2.14 we can write
\[
G = \sum_{j=0}^{d-1} K_j \gamma_j + G_0, \ K_j \in \Pot_{\V}^{m-j, 0}(X, Y). 
\]

The first term on the right hand side is continuous by Proposition \ref{Prop:gammaj} and \emph{ii)}. 
The second term has the following series representation by \cite{RS}, Lemma 7, p.149
\[
G_0 = \sum_{j=1}^{\infty} \lambda_j \tilde{K}_j \tilde{T}_j 
\]

for a sequence of potential operators $\tilde{K}_j$ and trace operators $\tilde{T}_j$. 
Hence the continuity follows by \emph{i)} and \emph{ii)}. 
\end{proof}

In the sequel we will study the corresponding Guillemin completion for Boutet de Monvel's calculus. 
The completion has favorable algebraic properties. Most importantly, inverses (if they exist) are contained in the calculus. Define the completion $\overline{\B}_{\V}^{-\infty,0}(X, Y)$ of the residual Boutet de Monvel operators with regard to the family of norms
of operators $\L\left(\begin{matrix} H_{\V}^t(X) \\ \oplus \\ H_{\W}^t(Y) \end{matrix}, \ \begin{matrix} H_{\V}^r(X) \\ \oplus \\ H_{\W}^r(Y) \end{matrix}\right)$ on Sobolev spaces.

\begin{Def}
Let 
\[
\Bc_{\V}^{m,d}(X, Y) := \B_{\V}^{m,d}(X, Y) + \Bc^{-\infty, d}(X, Y) 
\]

be the \emph{completed} calculus of Boutet de Monvel operators.
\label{Def:completed}
\end{Def}

\begin{Prop}
Let $R_{+}^m := \chi^{+} R_{-}^m \chi^{0}$ with $R_{-}^m \in \overline{\Psi}_{tr, \V}^m(M)$ and $\tilde{R}^m \in \overline{\Psi}_{\W}^m(Y)$ order
reductions in the pseudodifferential calculi. 
Define $\Lambda^m := \diag(R_{+}^m, \tilde{R}^m)$, then $\Lambda^m \in \Bc_{\V}^{m,0}(X, Y)$. 
\label{Prop:red}
\end{Prop}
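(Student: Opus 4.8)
The plan is to exhibit $\Lambda^m$ explicitly as a matrix of the shape prescribed in Definition~\ref{Def:extended}~\emph{iv)} (with the order/type conventions of Definition~\ref{Def:Bmd} and the completion of Definition~\ref{Def:completed}), where every off-diagonal entry and the singular Green part vanish. Concretely I would write
\[
\Lambda^m \;=\; \begin{pmatrix} R_+^m + 0 & 0 \\ 0 & \tilde{R}^m \end{pmatrix},
\]
and verify slot by slot: the upper-left slot must be of the form $P_+ + G$ with $P \in \overline{\Psi}_{tr,\V}^m(M)$ and $G \in \Green_{\V}^{m,0}(X,Y)$; the lower-right slot must be $S \in \overline{\Psi}_{\W}^m(Y)$; and the entries $K=0 \in \Pot_{\V}^m(X,Y)$, $T=0 \in \Trace_{\V}^{m,0}(X,Y)$ are admissible potential and trace operators of type $0$ since those classes are linear spaces. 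As no $\partial_+^j$-terms with $j\ge 1$ appear anywhere, the matrix is of type $d=0$.

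For the upper-left entry, by hypothesis $R_-^m \in \overline{\Psi}_{tr,\V}^m(M)$, i.e. it is a (completed) $\V$-pseudodifferential operator on the double whose symbol lies in the transmission class $S_{tr}^m(\A^{\ast})$ of Definition~\ref{Def:tprop}; this is precisely the data of a $P_+$-term of order $m$. Thus, taking $P = R_-^m$ and $G = 0$, one gets $R_+^m = \chi^+ R_-^m \chi^0$ in the upper-left corner. That $R_-^m$ genuinely has the transmission property is arranged in the construction preceding Definition~\ref{Def:reduction}: the local model symbols $r(\xi)$ in interior charts (where no boundary is present, so transmission is vacuous) and $r_-(\xi)$ in boundary charts both have it — for $r_-$ this is \cite[Proposition~1.3]{HS} — and the transmission class is stable under composition, $k$-th powers, and parametrix inversion, which is all that enters the formula $R_- = [q(a^{1/k})^{-1}]^k$. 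For the lower-right entry there is nothing to prove beyond the hypothesis $\tilde{R}^m \in \overline{\Psi}_{\W}^m(Y)$: a $\W$-pseudodifferential operator of order $m$ on the boundary Lie manifold $(Y,\W)$ is exactly an admissible ``$S$''-entry.

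Finally I would record that membership is in the \emph{completed} calculus $\Bc_{\V}^{m,0}(X,Y) = \B_{\V}^{m,0}(X,Y) + \Bc^{-\infty,0}(X,Y)$ of Definition~\ref{Def:completed}, since by assumption $R_-^m$ and $\tilde{R}^m$ lie in the Guillemin completions $\overline{\Psi}_{tr,\V}^m(M)$, $\overline{\Psi}_{\W}^m(Y)$ rather than merely in $\Psi_{tr,\V}^m(M)$, $\Psi_{\W}^m(Y)$ — which is exactly what makes them two-sided order reductions, cf. \cite{ALNV}, \cite{N}. I do not expect a genuine obstacle here: the whole content is unwinding the definitions, and the only point meriting care is the transmission property of $R_-^m$ (so that the upper-left slot is a bona fide $P_+$-term with trivial Green part and type zero, rather than forcing a nonzero $G$ or a higher type), which follows at once from the explicit construction of the order-reducing symbol $a$ and the stability of the transmission class.
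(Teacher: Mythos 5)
Your proof is correct; the paper in fact states this Proposition without any proof, treating it as an immediate consequence of the definitions, and your slot-by-slot verification of the matrix $\diag(R_+^m,\tilde{R}^m)$ against Definitions \ref{Def:extended}, \ref{Def:Bmd} and \ref{Def:completed} is exactly the intended argument. The only substantive point --- that $R_-^m$ retains the transmission property through the construction $R_- = [q(a^{1/k})^{-1}]^k$ --- is the one you correctly isolate and settle via the explicit order-reducing symbol and the stability of the transmission class under composition and parametrix inversion.
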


\begin{Prop}
Let $m \leq 0$, then $A \in \B_{\V}^{m,0}(X, Y)$ implies that $A^{\ast} \in \B_{\V}^{m,0}(X, Y)$ where
\[
A^{\ast} = \begin{pmatrix} \chi^{+} \op(a)^{\ast} \chi^{0} + G^{\ast} & T^{\ast} \\
            K^{\ast} & S^{\ast}
           \end{pmatrix}.
\]
\label{Prop:adjoint}
\end{Prop}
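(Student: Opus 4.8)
The plan is to compute the formal adjoint of $A$ blockwise with respect to the inner products of $L_{\V}^2(X) \oplus L_{\W}^2(Y)$ and to recognise each resulting block as a member of the appropriate subclass, invoking the structural results already at our disposal. Write $A = \C(B)$ with
\[
B = \begin{pmatrix} \op(a) + G & K \\ T & S \end{pmatrix} \in \B_{2\V}^{m,0}(M, Y),
\]
so that $\op(a) \in \Psi_{2\V, tr}^{m}(M)$, $G$ is a $\V$-singular Green operator, $K$ a $\V$-potential operator, $T$ a $\V$-trace operator of type $0$, and $S \in \Psi_{\W}^{m}(Y)$; the represented operator $A$ then has the blocks $\chi^{+}(\op(a)+G)\chi^{0}$, $\chi^{+}K$, $T\chi^{0}$, $S$. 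Since $(\chi^{0})^{\ast} = \chi^{+}$ and $(\chi^{+})^{\ast} = \chi^{0}$, the formal adjoint $A^{\ast}$ carries in its four slots the operators $\chi^{+}(\op(a)+G)^{\ast}\chi^{0}$, $\chi^{+}T^{\ast}$, $K^{\ast}\chi^{0}$, $S^{\ast}$. It thus remains to show that $\op(a)^{\ast}$, $G^{\ast}$, $T^{\ast}$, $K^{\ast}$, $S^{\ast}$ again belong to the respective calculi, and that no truncation term of positive type appears.

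For the pseudodifferential block I would use that the formal adjoint of a $\V$-pseudodifferential operator on $M$ is again one of the same order (the adjoint statement for the calculus of \cite{ALN}, \cite{ALNV}) and that the transmission property is preserved under taking the adjoint of a classical symbol, so $\op(a)^{\ast} \in \Psi_{2\V, tr}^{m}(M)$. Because $m \leq 0$, the operator $\op(a)$ is $L^2$-bounded on $M$ and the extension by zero $\chi^{0}$ acts continuously on $L^2$, so the elementary identity $(\chi^{+}\op(a)\chi^{0})^{\ast} = \chi^{+}\op(a)^{\ast}\chi^{0}$ holds with \emph{no} extra singular Green summand; this is precisely the point at which $m \leq 0$ is used, since for $m > 0$ one would have to add a singular Green operator of positive type. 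Hence the $(1,1)$ block of $A^{\ast}$ is $\chi^{+}\op(a)^{\ast}\chi^{0} + \chi^{+}G^{\ast}\chi^{0}$, which is of the stated form once $\chi^{+}G^{\ast}\chi^{0}$ is identified as a truncated $\V$-singular Green operator.

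The three remaining blocks I would handle through the reduced-kernel description. By Proposition \ref{Prop:conormal}, the adjoint of an extended trace operator is an extended potential operator and vice versa, the transition being $k \mapsto \overline{k}$ with the two arguments interchanged; conormality to $\Delta_Y$ and compactness of the reduced support are manifestly preserved, so $T^{\ast} \in \Pot_{2\V}^{m}(M, Y)$ and $K^{\ast} \in \Trace_{2\V}^{m, 0}(M, Y)$ (the type staying $0$; see also Remark \ref{Rem:stdrepr}). The same conjugate-and-swap argument applied to the reduced kernel of an extended singular Green operator --- a compactly supported distribution on $\G$ conormal to $\Delta_Y$ --- gives $G^{\ast} \in \Green_{2\V}^{m, 0}(M, Y)$; for the residual parts in Definition \ref{Def:extended} one uses that the adjoint of a pullback $\varphi_{V}$ by the time-one flow of $V \in 2\V$ is, up to multiplication by a smooth positive density, the pullback by the reversed flow, so the residual spanning families are carried into residual spanning families modulo smoothing, and likewise for the $\psi_{W}$. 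Finally $S^{\ast} \in \Psi_{\W}^{m}(Y)$ by the adjoint statement for the pseudodifferential calculus on the Lie manifold $Y$. Re-assembling the four slots exhibits $A^{\ast}$ in the matrix form claimed, so $A^{\ast} \in \B_{\V}^{m, 0}(X, Y)$.

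I expect the main obstacle to be the careful bookkeeping in the pseudodifferential step: verifying that for $m \leq 0$ the formal adjoint of the truncated block genuinely produces no leftover singular Green term of positive type, and that the transmission property really descends to $\op(a)^{\ast}$ in the $\V$-framework and not merely in the model case on $M_0$. The remaining ingredients are essentially transcriptions of Proposition \ref{Prop:conormal}, Remark \ref{Rem:stdrepr}, and the adjoint statements for $\Psi_{2\V}$ and $\Psi_{\W}$.
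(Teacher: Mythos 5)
Your proposal is correct and follows essentially the same route as the paper's proof: the identity $(\chi^{+}\op(a)\chi^{0})^{\ast} = \chi^{+}\op(a)^{\ast}\chi^{0}$ via $L^2$-continuity for $m \leq 0$, the trace/potential duality from Proposition \ref{Prop:conormal}, self-duality of the singular Green class, and closedness of the Lie pseudodifferential calculus on $Y$ under adjoints. You simply spell out details (preservation of the transmission property, the residual classes, absence of a positive-type Green remainder) that the paper leaves implicit.
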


\begin{proof}
We have $[\chi^{+} \op(a) \chi^{0}]^{\ast} = \chi^{+} \op(a)^{\ast} \chi^{0}$ immediately because $m \leq 0$ and $L^2$-continuity. 
The adjoint of a trace operator is a potential operator and vice versa and the adjoint of a singular Green operator is a again a singular
Green operator by definition. 
Finally, $S^{\ast} \in \Psi_{\W}^m(Y)$ since the Lie calculus is closed under adjoints, see \cite[Corollary 3.3]{ALN}.
\end{proof}

For the following Theorem we use a common trick based on order reductions, see for example \cite[Theorem 6.3.6]{UB}. 

\begin{Thm}
Let $A \in \B_{\V}^{m,d}(X, Y)$ with $d \leq m_{+}$. 

\emph{i)} With $m \leq 0$ and $d = 0$ it follows that 
\[
A \cdot A^{\ast} \in \B_{\V}^{2m,0}(X, Y). 
\]

\emph{ii)} For $m > 0$ it follows that
\[
(A \cdot \Lambda^{-m}) \cdot (A \cdot \Lambda^{-m})^{\ast} \in \B_{\V}^{0,0}(X, Y). 
\]

\emph{iii)} If $A \in \Bc_{\V}^{m,d}(X, Y)$ is invertible for some $s \geq m$ as an operator
\[
A \colon \begin{matrix} H_{\V}^s(X, E) \\ \oplus \\ H_{\W}^{s - \frac{1}{2}}(Y) \end{matrix} \to \begin{matrix} H_{\V}^{s - m}(X, E) \\ \oplus \\ H_{\W}^{s - m -\frac{1}{2}}(Y, F) \end{matrix}
\]

then the inverse $A^{-1} \in \Bc_{\V}^{-m, \max\{s -m, 0\}}(X, Y)$.  
\label{Thm:inverse}
\end{Thm}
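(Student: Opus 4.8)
The plan is to dispose of \emph{i)} and \emph{ii)} quickly, using closedness under composition and the behaviour of adjoints, and to put the work into \emph{iii)}, which is the order-reduction bootstrap announced before the statement and where the completed calculus is shown to be spectrally invariant. For \emph{i)}: since $m\le 0$ and $d=0$, Proposition \ref{Prop:adjoint} gives $A^{\ast}\in\B_{\V}^{m,0}(X,Y)$, again of type $0$, and the graded form of closedness under composition (Theorem \ref{Thm:BM}, whose proof via Lemma \ref{Lem:conormal} is valid for arbitrary orders) yields $A\cdot A^{\ast}\in\B_{\V}^{m,0}(X,Y)\cdot\B_{\V}^{m,0}(X,Y)\subseteq\B_{\V}^{2m,0}(X,Y)$, with type $\max\{m,0\}=0$ because $m\le 0$. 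For \emph{ii)}: put $\tilde A:=A\,\Lambda^{-m}$ with $\Lambda^{-m}=\diag(R_{+}^{-m},\tilde R^{-m})\in\Bc_{\V}^{-m,0}(X,Y)$ the order reduction of Proposition \ref{Prop:red}; by the composition rule for orders and types — order $0$ and type $\max\{d-m,0\}=0$ since $d\le m_{+}=m$ — we get $\tilde A\in\Bc_{\V}^{0,0}(X,Y)$, and since $\Bc_{\V}^{-\infty,0}(X,Y)$ is stable under $\ast$ (its defining Sobolev operator norms being merely interchanged) and Proposition \ref{Prop:adjoint} handles the non-residual part, $\tilde A^{\ast}\in\Bc_{\V}^{0,0}(X,Y)$ as well, whence \emph{i)}, applied in the completed calculus with $m=0$, gives $(A\Lambda^{-m})(A\Lambda^{-m})^{\ast}\in\Bc_{\V}^{0,0}(X,Y)$.

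For \emph{iii)}, the first step is a reduction to order $0$ and type $0$. The order reductions $\Lambda^{r}\in\Bc_{\V}^{r,0}(X,Y)$ of Proposition \ref{Prop:red} are isomorphisms $H_{\V}^{t}(X)\oplus H_{\W}^{t-\frac{1}{2}}(Y)\iso H_{\V}^{t-r}(X)\oplus H_{\W}^{t-r-\frac{1}{2}}(Y)$ for every $t$, so, conjugating by suitable such reductions, $B:=\Lambda^{s-m}\,A\,\Lambda^{-s}$ lies in $\Bc_{\V}^{0,0}(X,Y)$ (by the order/type composition rule, using $s\ge m$) and is invertible on $L_{\V}^{2}(X)\oplus L_{\W}^{2}(Y)$, since $A$ and the $\Lambda$'s are invertible on the relevant Sobolev spaces.

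The heart of the matter is then $B^{-1}\in\Bc_{\V}^{0,0}(X,Y)$, i.e. the spectral invariance of $\Bc_{\V}^{0,0}(X,Y)$ inside $\L\big(L_{\V}^{2}(X)\oplus L_{\W}^{2}(Y)\big)$, which I would obtain as in the pseudodifferential case of \cite{ALNV}: invertibility of $B$ forces the principal symbol and the principal boundary symbol of $B$ to be invertible (necessity of ellipticity for Fredholmness), so $B$ is $\V$-elliptic and Theorem \ref{Thm:parametrix}\emph{i)} yields a parametrix $B'\in\Bc_{\V}^{0,0}(X,Y)$ with $BB'-I,\ B'B-I\in\Bc_{\V}^{-\infty,0}(X,Y)$; it then remains to know that $\Cc I+\Bc_{\V}^{-\infty,0}(X,Y)$ is inverse-closed in $\L\big(L_{\V}^{2}(X)\oplus L_{\W}^{2}(Y)\big)$ — the $\Psi^{\ast}$-algebra property of the completed residual ideal — which, together with the invertibility of $B$, forces $B^{-1}\in\Bc_{\V}^{0,0}(X,Y)$. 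Undoing the reductions, $A^{-1}=\Lambda^{-s}\,B^{-1}\,\Lambda^{s-m}$; the orders add to $-m$, and the order/type composition rule produces $A^{-1}\in\Bc_{\V}^{-m,\max\{s-m,0\}}(X,Y)$, the type bound being exactly what this bookkeeping yields.

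The main obstacle is the order-$0$ spectral invariance used in \emph{iii)}: making rigorous that $\Cc I+\Bc_{\V}^{-\infty,0}(X,Y)$ is inverse-closed in $\L\big(L_{\V}^{2}(X)\oplus L_{\W}^{2}(Y)\big)$ — in particular that the Neumann-type series involved converge in the Fréchet topology given by \emph{all} the Sobolev operator norms defining $\Bc_{\V}^{-\infty,0}(X,Y)$, and that the finite-rank correction needed to invert the parametrix can be chosen residual (with smooth kernel, using elliptic regularity of the parametrix) — is the one genuinely analytic point, which I would handle by importing the $\Psi^{\ast}$-algebra machinery of \cite{ALNV} together with the Boutet de Monvel bookkeeping of \cite{G}, \cite{RS}; everything else is routine tracking of orders and types under composition with order reductions, in the spirit of \cite[Theorem 6.3.6]{UB}.
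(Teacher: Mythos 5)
Your proposal follows essentially the same route as the paper: parts \emph{i)} and \emph{ii)} are obtained from Proposition \ref{Prop:adjoint} together with closedness under composition, and part \emph{iii)} is reduced via the order reductions to an invertible element of $\Bc_{\V}^{0,0}(X,Y)$, whose inverse is then captured by the $\Psi^{\ast}$-property (spectral invariance) of the completed order-$(0,0)$ algebra, exactly as in the paper's appeal to \cite{ALNV} and \cite{N}. Your additional discussion of how the inverse-closedness of the residual ideal would be established is a more detailed account of the step the paper delegates to those references, not a different argument.
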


\begin{proof}
The properties \emph{i)} and \emph{ii)} follow immediately from Proposition \ref{Prop:adjoint}. 
% cite Battisti...
In order to prove \emph{iii)} we consider the pseudodifferential order reductions 
\[
R_{1}^{-s} \colon L_{\V}^2(X, E) \oplus H_{\W}^{-\frac{1}{2}}(Y, F) \iso H_{\V}^s(X, E) \oplus H_{\W}^{s- \frac{1}{2}}(Y, F) 
\]

and
\[
R_2^{s-m} \colon H_{\V}^{s-m}(X, E) \oplus H_{\W}^{s- m- \frac{1}{2}}(Y, F) \iso L_{\V}^2(X, E) \oplus H_{\W}^{-\frac{1}{2}}(Y, F).
\]

of order $-s$ and $s -m$ respectively. 

Then $B := R_{2}^{s-m} A R_1^{-s}$ is contained in $\Bc_{\V}^{0,0}(X, Y)$ and is invertible. 
Therefore the assertion reduces to the order $(0,0)$ case. We apply the $\Psi^{\ast}$-property of the $(0,0)$-algebra which is
proved analogously to \cite[Theorems 6.1-6.2]{ALNV} and \cite[Lemma 4.8]{N}. 
\end{proof}

% proof: \Psi^{\ast}-property

\section{Parametrix}
\label{parametrices}

% principal symbols: extensions
% hence multiplicative

In this section we will introduce the principal and principal boundary symbol of an operator in our calculus. 
We will define the notion of ellipticity and show that a parametrix exists under the previously stated conditions on the calculus.
A major technical problem is that in the Lie calculus already inverses of invertible operators are not necessarily contained.
This makes a parametrix construction difficult and we state here a version of such a result. 
% Presently, this relies on the strongest assumption made in this work.
There are at least two approaches to overcome the problem of inverses: \emph{(1)} using a larger calculus of pseudodifferential operators
(with asymptotics) or \emph{(2)} completing the algebra of pseudodifferential operators (non-canonically) such that inverses are contained.
In the last section we outlined the second approach. We will henceforth consider the completed Boutet de Monvel calculus as defined in Section \ref{properties}.

Fix the smooth, hermitian vector bundles $E_1, \ E_2 \to X, \ J_{\pm} \to Y$ and recall the notation for the boundary algebroid and its co-bundle
$\pi_{\partial} \colon \A_{\partial} \to Y, \ \overline{\pi}_{\partial} \colon \A_{\partial}^{\ast} \to Y$. 

We define the principal symbol and principal boundary symbol on $\B^{m,0}(\G^{+}, \X; \tilde{E}_1, \tilde{E}_2, \tilde{J}_{\pm})$
for $m \leq 0$.

\begin{itemize}
\item Set $T^q \X := \ker dq$ for the vertical tangent bundle over $\X$. 

\item Let $A = (A_x)_{x \in X} \in \B^{m,0}(\G^{+}, \X)$ be a $C^{\infty}$-family of Boutet de Monvel operators.

\item Then for each $x \in X$ we have a Boutet de Monvel operator $A_x \in \B_{prop}^{m,0}(\G_x^{+}, \X_x)$. 

\item Hence the principal symbol $\sigma(A_x)$ and the principal boundary symbol $\sigma_{\partial}(A_x)$ are defined
invariantly on $T^{\ast} \G_x^{+}$ and $T^{\ast} \X_x$ respectively.

\item By right-invariance of the family $A$ these symbols descend to a principal symbol $\sigma(A)$ and principal boundary symbol $\sigma_{\partial}(A)$
defined invariantly on $\A_{+}^{\ast}$ and $(T^q \X)^{\ast}$ respectively. 

\item Introduce the \emph{indicial symbol} $\R_F$ for a face $F \in \F_1(X)$ as the restriction $\R_F((A_x)_{x \in X}) = (A_y)_{y \in F}$. 
\end{itemize}

By noting that $T_{|Y}^q \X = \A_{\partial}$ we make the following definition for principal and principal boundary symbol
on the represented algebra $\B_{\V}^{m,0}(X, Y)$ for $m \leq 0$. 

\begin{Def}
To an element $A \in \B_{\V}^{m,0}(X, Y; E_1, E_2, J_{\pm})$ we associate the two principal symbols.

\emph{i)} The \emph{principal boundary symbol} $\sigma_{\partial}^{\V}(A)$ is defined as 
\[
\sigma_{\partial}^{\V}(A) := \sigma_{\partial|Y}(A). 
\]

This yields a section of the infinite dimensional bundle
\[
C^{\infty}(\A_{\partial}^{\ast}, \Hom(\overline{\pi}_{\partial}^{\ast} E_{1|Y} \otimes \SV, \overline{\pi}_{\partial}^{\ast} E_{2|Y} \otimes \SV)).
\]

Here $\SV \to \A_{\partial}^{\ast}$ is a bundle with fiber $S(\overline{\Rr}_{+})$ on the inward pointing normal direction.

In particular the restriction of $\sigma_{\partial}^{\V}$ to the interior $(X_0, Y_0)$ agrees with the principal boundary symbol on the interior.

\emph{ii)} The \emph{principal symbol} $\sigma^{\V}(A)$ which is the principal symbol of the pseudodifferential operator
in the upper left corner of the matrix $A$.
This yields a section in 
\[
C^{\infty}(\A_{+}^{\ast}, \Hom(E_1, E_2)). 
\] 

\emph{iii)} Define by $\Sigma_{\V}^{m,0}(X, Y; E_1, E_2, J_{\pm})$ the space consisting of pairs of principal symbols $(a, a_{\partial})$.
These are homogenous or $\kappa$-homogenous sections of the bundles $\A_{+}^{\ast}, \A_{\partial}^{\ast}$, respectively, with canonical compatibility condition.

\label{Def:bdysmb}
\end{Def}

% \textbf{Assumption D.} Given an invertible Boutet de Monvel operator $A \in \Bc_{\V}^{0,0}(X, Y; E_1, E_2, J_{\pm})$ 
% the inverse $(\sigma \oplus \sigma_{\partial})(A)^{-1}$ is defined and contained in $\Sigma_{\V}^{0,0}(X, Y; E_1, E_2, J_{\pm})$.

% prove that: \sigma and \sigma_{\partial} are multiplicative
% proof idea: Use groupoid structure and restrict from T \X to \A_{\partial}^{\ast}
% multiplicativity "clear" for families

% \textbf{Assumption E.} The calculus $\Bc_{\V}^{m,0}(X, Y; E_1, E_2, J_{\pm})$ is asymptotically complete.
% That means given a sequence of operators $(A_i)_{i \in \Nn_0}$ with $A_i \in \Bc_{\V}^{-i, 0}(X, Y)$ there is a
% $A \in \Bc_{\V}^{0,0}(X, Y)$ such that
% \[
% A - \sum_{i=0}^N A_i \in \Bc_{\V}^{m- N - 1, 0}(X, Y), \ \text{for each} \ N \in \Nn_0. 
% \]
% 
% We write briefly $A \sim \sum_{i=0}^{\infty} A_i$ for this. 

\begin{Lem}
Given $A, B \in \B_{\V}^{0,0}(X, Y; E_1, E_2, J_{\pm})$ we have 
\[
\sigma^{\V}(A \cdot B) = \sigma^{\V}(A) \cdot \sigma^{\V}(B), \ \sigma_{\partial}^{\V}(A \cdot B) = \sigma_{\partial}^{\V}(A) \cdot \sigma_{\partial}^{\V}(B).
\]
\label{Lem:mult}
\end{Lem}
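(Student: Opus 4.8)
The plan is to reduce the statement to the classical multiplicativity of the two symbol maps of the Boutet de Monvel calculus on a smooth manifold with boundary, applied fibrewise. First I would lift $A$ and $B$ through the vector representation: by Theorem~\ref{Thm:BM2} there are equivariant families $\tilde A=(\tilde A_x)_{x\in X}$ and $\tilde B=(\tilde B_x)_{x\in X}$ in $\B^{0,0}(\G^{+},\X)$ representing $A$ and $B$, and — since the representation is an algebra homomorphism and the composition on the groupoid side is carried out fibrewise, as in the proof of Lemma~\ref{Lem:conormal} — the product $A\cdot B$ is represented by $(\tilde A_x\cdot\tilde B_x)_{x\in X}$, each $\tilde A_x\cdot\tilde B_x$ being a composition of properly supported Boutet de Monvel operators on the smooth manifold with boundary $(\G_x^{+},\X_x)$.

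Next, I recall from the list of properties preceding Definition~\ref{Def:bdysmb} how the two symbols are built: for each $x$ the fibre operator $\tilde A_x$ has a well-defined principal symbol $\sigma(\tilde A_x)$ on $T^{\ast}\G_x^{+}$ and a well-defined (operator-valued) principal boundary symbol $\sigma_{\partial}(\tilde A_x)$ on $T^{\ast}\X_x$, and by right-invariance of the family these glue to sections of the relevant $\Hom$-bundles over $\A_{+}^{\ast}$ and $(T^{q}\X)^{\ast}$, whose restrictions to $Y$ are precisely $\sigma^{\V}(A)$ and $\sigma_{\partial}^{\V}(A)$, using $T^{q}_{|Y}\X=\A_{\partial}$. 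Consequently both sides of each claimed identity are determined by their values on every fibre, so it suffices to prove, for each fixed $x\in X$,
\[
\sigma(\tilde A_x\cdot\tilde B_x)=\sigma(\tilde A_x)\,\sigma(\tilde B_x),\qquad \sigma_{\partial}(\tilde A_x\cdot\tilde B_x)=\sigma_{\partial}(\tilde A_x)\circ\sigma_{\partial}(\tilde B_x),
\]
where the first product is the ordinary product of the principal symbols of the pseudodifferential entries in the upper-left corner, and the second is the composition of operator-valued boundary symbols acting on the model space $S(\overline{\Rr}_{+})$.

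These two fibrewise identities are the classical multiplicativity statements for the Boutet de Monvel calculus on a smooth manifold with boundary. For the interior principal symbol one uses in addition that the cross terms $P_A G_B$, $G_A P_B$, $G_A G_B$ and $K_A T_B$ arising in the $(1,1)$-entry of the matrix product are singular Green operators and hence do not contribute to the principal symbol of its pseudodifferential part, so that symbol equals $\sigma(\tilde A_x)\sigma(\tilde B_x)$ by multiplicativity of the principal symbol in $\Psi^{0}(\G_x^{+})$, equivalently in the Lie pseudodifferential calculus, cf. \cite{NWX}, \cite{ALN}. For the principal boundary symbol I would invoke the standard references \cite{G}, \cite{RS}, where $\sigma_{\partial}$ is shown to be an algebra homomorphism onto operators of Wiener--Hopf/singular-Green type on the half-line; its continuity with respect to the symbolic data also shows it passes to the completed calculus $\Bc_{\V}^{0,0}(X,Y)$. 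Letting $x$ range over $X$ and restricting to $Y$ then yields both identities.

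The step I expect to require the most care is the bookkeeping for the principal boundary symbol: one must match the operator-valued symbol composition on the fibres $\X_x$ with the descended object over $\A_{\partial}^{\ast}$, and verify that the bundle $\SV$ with fibre $S(\overline{\Rr}_{+})$ is the natural one in which the composition takes place and is preserved under the equivariant descent. Concretely this means checking that the flip diffeomorphism $\flip$ and the identification $T^{q}_{|Y}\X\cong\A_{\partial}$ are compatible with the inward-normal half-line structure entering the definition of $\sigma_{\partial}$; once this compatibility is recorded, multiplicativity is inherited verbatim from the classical calculus.
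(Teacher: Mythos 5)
Your proposal is correct and follows essentially the same route as the paper: reduce via the vector representation isomorphism $\B_{\V}^{0,0}(X,Y)\cong\B^{0,0}(\G^{+},\X)$ to the equivariant families, and then apply the classical fibrewise multiplicativity of the principal and principal boundary symbols. The extra detail you supply (the cross terms in the $(1,1)$-entry being singular Green operators, and the compatibility of $T^{q}_{|Y}\X\cong\A_{\partial}$ with the half-line structure) is left implicit in the paper but is consistent with its argument.
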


\begin{proof}
From the assumption that the groupoid $\G$ is Hausdorff we obtain by the same argument as in \cite[p.11]{N} that the vector representation furnishes an isomorphism 

\[
\B_{\V}^{0,0}(X, Y; E_1, E_2, J_{\pm}) \cong \B^{0,0}(\G^{+}, \X; \tilde{E}_1, \tilde{E}_2, \tilde{J}_{\pm}). 
\]

Since the principal and principal boundary symbol are each defined invariantly on the bundles $\A_{+}^{\ast}, \ \A_{\partial}^{\ast}$
the computation reduces to the equivariant families of Boutet de Monvel operators.
Hence given $A = (A_x)_{x \in X}, \ B = (B_x)_{x \in X}$ in $\B^{0,0}(\G^{+}, \X; \tilde{E}_1, \tilde{E}_2, \tilde{J}_{\pm})$ we have
\begin{align*}
\sigma_{\partial}(A \cdot B) &= \sigma_{\partial}((A_x \cdot B_x)_{x \in X}) = (\sigma_{\partial}(A_x \cdot B_x))_{x \in X} \\
&= (\sigma_{\partial}(A_x) \cdot \sigma_{\partial}(B_x))_{x \in X} = \sigma_{\partial}(A) \cdot \sigma_{\partial}(B).
\end{align*}

In the same way we obtain multiplicativity of the principal symbol. 
\end{proof}

Since we will in the following only be concerned with represented operators we will simply write $\sigma$ and $\sigma_{\partial}$ for $\sigma^{\V}$ and $\sigma_{\partial}^{\V}$.

For the following result and proof in the standard case see e. g. \cite{RS}.

\begin{Thm}
The following sequence is exact
\[
\xymatrix{
\B_{\V}^{-1,0}(X, Y; E_1, E_2, J_{\pm}) \ar@{>->}[r] & \B_{\V}^{0,0}(X, Y; E_1, E_2, J_{\pm}) \ar@{->>}[r]^{\sigma \oplus \sigma_{\partial}} & \Sigma_{\V}^{0,0}(X, Y; E_1, E_2, J_{\pm}).
}
\]

\label{Thm:exact}
\end{Thm}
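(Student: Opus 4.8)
The plan is to reduce the claim to the classical exactness of the symbol sequence in Boutet de Monvel's calculus on a smooth manifold with boundary, applied fibrewise and uniformly in the parameter. Since $\G$ is Hausdorff by our standing assumptions, the vector representation gives, as in the proof of Lemma \ref{Lem:mult}, the isomorphism $\B_{\V}^{0,0}(X, Y; E_1, E_2, J_{\pm}) \cong \B^{0,0}(\G^{+}, \X; \tilde{E}_1, \tilde{E}_2, \tilde{J}_{\pm})$, under which an element corresponds to a $C^{\infty}$, equivariant, properly supported family $A = (A_x)_{x \in X}$ with each $A_x \in \B_{prop}^{0,0}(\G_x^{+}, \X_x)$, and under which $\sigma^{\V}$ and $\sigma_{\partial}^{\V}$ are by construction the descents to $\A_{+}^{\ast}$ and $(T^q \X)^{\ast}$ of the fibrewise principal and principal boundary symbols $\sigma(A_x)$, $\sigma_{\partial}(A_x)$. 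Hence it is enough to establish exactness of the fibrewise sequence with smooth dependence on $x$.

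Exactness at $\B_{\V}^{0,0}$ is then routine. The left-hand map is the inclusion, hence injective, and an operator of order $-1$ has vanishing order-$0$ principal symbol and vanishing order-$0$ principal boundary symbol, so the composition $(\sigma \oplus \sigma_{\partial}) \circ \iota$ is zero. Conversely, if $A \in \B_{\V}^{0,0}$ has $\sigma^{\V}(A) = 0$ and $\sigma_{\partial}^{\V}(A) = 0$, then for each $x$ the classical result (see e.g.\ \cite{RS}, \cite{G}) gives $A_x \in \B_{prop}^{-1,0}(\G_x^{+}, \X_x)$; the equivariance, the uniform support condition and the $C^{\infty}$-dependence on $x$ are inherited from $A$, so the family lies in $\B^{-1,0}(\G^{+}, \X)$, and its vector representation is the required element of $\B_{\V}^{-1,0}(X, Y)$.

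For surjectivity, fix $(a, a_{\partial}) \in \Sigma_{\V}^{0,0}(X, Y; E_1, E_2, J_{\pm})$. Using a partition of unity subordinate to a normal cover of $M$ as in Section \ref{properties} (where the symbols $r_{-}$ are seen to have the transmission property), quantize $a$ to a classical operator $P \in \Psi_{2\V, tr}^{0}(M; \tilde{E}_1, \tilde{E}_2)$ and put $P_{+} = \chi^{+} P \chi^{0}$, so that the upper-left entry of the resulting matrix has principal symbol $a$. Its principal boundary symbol is the one forced by $a$ through the transmission property and will in general differ from $a_{\partial}$; by the canonical compatibility condition built into $\Sigma_{\V}^{0,0}$, the difference $a_{\partial} - \sigma_{\partial}(P_{+})$ has vanishing interior (before-truncation) pseudodifferential part, hence is the principal boundary symbol of a block made of a singular Green symbol of order $0$ (carrying the $S(\overline{\Rr}_{+})$-decay encoded by $\SV$), a potential symbol, a trace symbol, and the symbol of a pseudodifferential operator on $Y$. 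This is local in a collar of $Y$, where axiom \emph{iii)} of Definition \ref{Def:bdystruct} together with the identification $\N^{\X}\Delta_Y \cong \A_{\partial} \oplus \N$ transport it from the interior model $Y_0 \times M_0$. Quantizing these data by the maps $q_{G, \varphi}$, $q_{T, \chi}$, $q_{K, \chi^{t}}$ of Definition \ref{Def:quant}, together with a pseudodifferential operator $S \in \Psi_{\W}^{0}(Y; J_{+}, J_{-})$, produces $G$, $K$, $T$, $S$ such that
\[
A = \begin{pmatrix} P_{+} + G & K \\ T & S \end{pmatrix} \in \B_{\V}^{0,0}(X, Y; E_1, E_2, J_{\pm})
\]
satisfies $\sigma^{\V}(A) = a$ and $\sigma_{\partial}^{\V}(A) = a_{\partial}$.

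The main obstacle is precisely this last step: showing that the compatibility condition defining $\Sigma_{\V}^{0,0}$ is exactly the obstruction to $a_{\partial} - \sigma_{\partial}(P_{+})$ lying in the image of the principal boundary symbol restricted to the Green--potential--trace--boundary-$\Psi$DO block, i.e.\ that the transmission property pins down the boundary symbol of $P_{+}$ from that of $P$, and that the remaining freedom is exactly a $0$-th order Green symbol with rapid decay plus potential, trace and boundary-pseudodifferential symbols. Since everything here takes place in a collar of $Y$, which the boundary structure identifies with the interior model, this is the classical computation of \cite{G}, \cite{RS} adapted verbatim; conjugating with the order reductions $\Lambda^{m}$ of Proposition \ref{Prop:red}, as in the proof of Theorem \ref{Thm:inverse}, then yields the analogous statement in other orders and types if required.
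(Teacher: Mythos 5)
Your proposal is correct and follows essentially the same route as the paper: exactness at the middle term is obtained by reducing to the classical Boutet de Monvel symbol sequence (you do this fibrewise via the vector representation, the paper via restriction to the interior $(X_0, Y_0)$, which amounts to the same reduction), and surjectivity is obtained by first lifting $a$ through the pseudodifferential exact sequence for $\Psi_{tr,2\V}^{0}$ and then quantizing the remaining singular Green, potential, trace and boundary symbols locally in a collar of $Y$ with a partition of unity. The ``main obstacle'' you flag --- that the compatibility condition in $\Sigma_{\V}^{0,0}$ is exactly the obstruction and that the residual freedom is a Green/potential/trace/boundary-$\Psi$DO block --- is left at the same level of detail in the paper, which likewise defers it to the classical references.
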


\begin{proof}
\emph{i)} The same exact sequence holds for the interior calculus. Since the principal symbols are extensions of the interior
we immediately obtain that $\ker \sigma \oplus \sigma_{\partial} = \ker \sigma \cap \ker \sigma_{\partial} = \B_{\V}^{-1,0}(X, Y)$. 

\emph{ii)} To prove surjectivity let $(a, a_{\partial}) \in \Sigma_{\V}^{0,0}$.

Since we also have an analogous exact sequence for the class of pseudodifferential operators $\Psi_{tr,2\V}^m$ it suffices to find singular Green, trace and potential operators in the preimage. 

In a fixed small tubular neighborhood $\U \cong Y_{(\epsilon)}$ trivialize the singular normal bundle $\N$. 
Let $\{U_i\}$ be a normal covering of $Y$ (assumed finite by compactness of $Y$) and let $\{\varphi_i\}$ be a subordinate partition of unity.
We also fix a boundary defining function $\rho_Y \colon M \to \Rr$ such that $\{\rho_Y = 0\} = Y$. This is defined by the tubular neighborhood
theorem for Lie manifolds (cf. \cite{AIN}). 
Hence for the diffeomorphism of tubular neighborhoods $\nu \colon Y \times (-\epsilon, \epsilon) \to \U \subset M$ we have 
\[
(\rho_Y \circ \nu)(x', x_n) = x_n, \ x' \in Y, \ x_n \in (-\epsilon, \epsilon).
\]

Let $\varphi \in C^{\infty}(\overline{\Rr}_{+})$ be a cutoff function such that $\varphi(x_n) = 0$ close to $0$. 
Hence we can locally on each trivialization $\N_{|U_i}^{+} \cong U_i \times \overline{\Rr}_{+}$ construct singular Green, potential and trace operators.
Via our partition of unity we obtain corresponding global symbols.

% How do they become Lie pot., trace etc.?

This furnishes a global right inverse morphism $\OpV \colon \Sigma_{\V}^{0,0} \to \B_{\V}^{0,0}$. 
\end{proof}

The notion of ellipticity we introduce here is the usual condition of Shapiro-Lopatinski type. It is sufficient to obtain a parametrix. 
We obtain Fredholm operators if we additionally request the invertibility of the indicial symbols.

\begin{Def}
An operator $A \in \Bc_{\V}^{m,0}(X, Y)$ is called

\emph{i)} \emph{$\V$-elliptic} iff $(\sigma^{m} \oplus \sigma_{\partial})(A)$ is pointwise bijective,

\emph{ii)} \emph{elliptic} if $A$ is $\V$-elliptic and for each $F \in \F_1(X)$ the indicial symbol $\R_F(A)$ is invertible.
\label{Def:SL}
\end{Def}

% show that for two isomorphic Lie manifolds with boundary the SL condition is the same
% define iso in category of Lie manifolds with boundary

% remark on SL-ellipticity and action in the normal direction
% Fh families on closure of S(\overline{\Rr}_{+}) = L^2(\overline{\Rr}_{+})
% etc.

\begin{Rem}
The elements of the symbol algebra $\Sigma_{\V}$ can be written more simply in terms of the \emph{action in the normal direction}.
Let us define this action for a given symbol $p \in S_{tr}^{m}(\A^{\ast})$.
We set
\[
(\op_n^{+} p)(x', \xi') = p(x', 0, \xi', D_n)_{+}
\]

then in local trivializations of the bundles $E_1, \ E_2$ and $\SV$ we have
\[
(\op_n^{+} p)(x', \xi') \colon S(\overline{\Rr}_{+}) \otimes \Cc^k \to S(\overline{\Rr}_{+}) \otimes \Cc^k
\]

where $k$ is the fibre dimension of $E_1, \ E_2$.

We will alternatively consider these maps as operators acting on $L^2(\overline{\Rr}_{+})$, the closure of $S(\overline{\Rr}_{+})$.
In \cite{RS} the action in the normal direction is defined for the boundary symbols which act more generally as \emph{Wiener-Hopf operators} on the spaces $H^{+}$
and their respective $L^2$-closures. We will not need this formulation in the present work.

Denote by $p \colon S^{\ast} \A_{\partial} \to Y$ the canonical projection of the sphere bundle. 

Let $A$ be $\V$-elliptic and consider the principal symbol $\sigma_P$ of the pseudodifferential operator $P$ in the upper left corner.
Since the infinite dimensional bundles are trivial the action in the normal direction
\[
\op_n^{+} \sigma_P \colon p^{\ast} E_1 \otimes \SV \to p^{\ast} E_2 \otimes \SV
\]

yields a Fredholm family, parametrized over $S^{\ast} \A_{\partial}$ which preserves the index, cf. \cite{RS}, Prop. 5, p.95.

We can express the principal boundary symbol (with analogously defined actions in the normal direction for Green, potential and trace operators)
\[
\sigma_{\partial}(A) = \begin{pmatrix} \op_n^{+} p + \op_n g & \op_n k \\
\op_n t & s \end{pmatrix}
\]

as an element of $C^{\infty}(S^{\ast} \A_{\partial}, \Hom(p^{\ast} E_{1|Y} \otimes \SV, p^{\ast} E_{2|Y} \otimes \SV)$. 

% add more later...

\label{Rem:SL}
\end{Rem}

\begin{Proof}[of Thm. \ref{Thm:parametrix}]
\emph{a)} First we have to show that the symbol algebra (for the completed calculus) $\overline{\Sigma}_{\V}$ is closed
under inverse.
Specifically, let $A \in \Bc_{V}^{0,0}(X, Y)$ be $\V$-elliptic and invertible. 
Denote by $P$ the pseudodifferential operator in the upper left corner of $A$. By $\V$-ellipticity of $A$ we fix
a $Q \in \overline{\Psi}_{\V}^{0}(M)$ such that $\sigma(P^{-1}) = \sigma(Q)$. 
Set $Q = \op(q)$ for a symbol $q \in S^{0}(\A^{\ast})$. 
We want to show that there is a an elliptic boundary symbol of the form
\[
\sigma_{\partial}(B) = \begin{pmatrix} \op_n^{+} q + \op_n g & \op_n k \\
\op_n t & s \end{pmatrix} \colon \begin{matrix} p^{\ast} E_1 \otimes \SV \\ \oplus \\ p^{\ast} J_{+} \end{matrix} \to \begin{matrix} p^{\ast} E_2 \otimes \SV \\ \oplus \\ p^{\ast} J_{-} \end{matrix}
\] 

such that $\sigma_{\partial}(B) = \sigma_{\partial}(A)^{-1}$. 

For this we can adapt an approach to the problem the idea of which goes back to Boutet de Monvel, \cite{BM} (the index bundle, p.35) and which can be found in detail in the reference \cite{RS}. 
We divide the argument into three parts.

\emph{I)} We have
\[
\dim \ker \op_n^{+} \sigma_P \leq \text{const}, \ \dim \coker \op_n^{+} \sigma_P \leq \text{const}
\]

with constants independent of the parameter $\varrho \in S^{\ast} \A_{\partial}$. 
Then there is a finite dimensional trivial subbundle $W$ in $p^{\ast} E \otimes \SV$ such that
\[
(\im \sigma_P)_{(x', \xi')} + W_{(x', \xi')} = (p^{\ast} E_2 \otimes \SV)_{(x', \xi')}
\]

for each $(x', \xi') \in S^{\ast} \A_{\partial}$. 
We obtain the index element
\[
\ind_{S^{\ast} \A_{\partial}} \op_n^{+} \sigma_P \in K(S^{\ast} \A_{\partial})
\]

which depends on the homotopy class of the Fredholm family.
Let $(\sigma^{(t)})_{t \in [0,1]}$ be a homotopy of elliptic symbols in $S^{0}(\A^{\ast})$, then
\[
\ind_{S^{\ast} \A_{\partial}} \op_n^{+} \sigma^{(0)} = \ind_{S^{\ast} \A_{\partial}} \op_n^{+} \sigma^{(1)}. 
\]

Hence for $\sigma_P$ we obtain
\[
\ind_{S^{\ast} \A_{\partial}} \op_n^{+} \sigma_P = [p^{\ast} J_{+}] - [p^{\ast} J_{-}].
\]

By the same argument as in \cite{RS}, Prop. 11 on page 199 it follows that
\begin{align}
& \ind_{S^{\ast} \A_{\partial}^{\ast}} \op_n^{+} p \in p^{\ast} K(Y). \label{Fhpar}
\end{align}

\emph{Claim:} By \eqref{Fhpar} there is a Green symbol $g_0$ and bundles $\tilde{J}_{+}, \ \tilde{J}_{-} \to Y$ such that
\begin{align*}
&\ker_{S^{\ast} \A_{\partial}}(\op_n^{+} \sigma_P + \op_n g_0) \cong p^{\ast} \tilde{J}_{+}, \\
& \coker_{S^{\ast} \A_{\partial}}(\op_n^{+} \sigma_P + \op_n g_0) \cong p^{\ast} \tilde{J}_{-}. 
\end{align*}

For the proof see \cite{RS}, p. 201. 

\emph{II)} The construction of the boundary symbol is a parameter dependent construction of $C^{\infty}$-boundary symbol (an isomorphism)
\[
b_{\partial} = \begin{pmatrix} \op_n^{+} q + \op_n g_1 & \op_n k_1 \\
\op_n t_1 & s_1 \end{pmatrix} \colon \begin{matrix} p^{\ast} E_1 \otimes \SV \\ \oplus \\ p^{\ast} J_{+} \end{matrix} \to \begin{matrix} p^{\ast} E_2 \otimes \SV \\ \oplus \\ p^{\ast} J_{-} \end{matrix}.
\]

Since $\op_n^{+} q$ is a parametrix of $\op_n^{+} p$ in the sense of Fredholm families we have by part \emph{I)}
\[
\ind_{S^{\ast} \A_{\partial}} \op_n^{+} q = [p^{\ast} J_{+}] - [p^{\ast} J_{-}]. 
\]

In order to find an isomorphism of the form $b_{\partial}$ it suffices to find a Green symbol $g_1$ with
\begin{align*}
& \ker_{S^{\ast} \A_{\partial}} (\op_n^{+} q + \op_n g_1) \cong p^{\ast} J_{+}, \\
& \coker_{S^{\ast} \A_{\partial}} (\op_n^{+} q + \op_n g_1) \cong p^{\ast} J_{-}. 
\end{align*}

We know by the claim in part \emph{I)} that there is a Green symbol $g_0$ with 
\begin{align*}
& \tilde{J}_0^{+} = \ker_{S^{\ast} \A_{\partial}} (\op_n^{+} q + \op_n g_0) \cong p^{\ast} J_0^{+}, \\
& \tilde{J}_0^{-} = \coker_{S^{\ast} \A_{\partial}} (\op_n^{+} q + \op_n g_0) \cong p^{\ast} \Cc^N
\end{align*}

for suitable vector bundles $J_{0}^{\pm} \to Y$ and $N$ such that 
\[
[p^{\ast} J_{0}^{+}] - [\Cc^N] = [p^{\ast} J_{+}] - [p^{\ast} J_{-}]
\]

by Lemma 5, p. 203 \cite{RS}.

For $N$ sufficiently large there are decompositions 
\begin{align*}
& \tilde{J}_{0}^{+} = \tilde{V} \oplus \tilde{J}_{+}, \ \tilde{J}_0 = \tilde{W} \oplus \tilde{J}_{-}, \\
& J_{0}^{+} \cong V \oplus J_{+}, \ \Cc^N \cong W \oplus J_{-}
\end{align*}

for vector bundles $V, W \to Y$ with isomorphisms
\[
\tilde{V} \cong p^{\ast} V, \ \tilde{J}_{+} \cong p^{\ast} J_{+}, \ \tilde{W} \cong p^{\ast} W, \ \tilde{J}_{-} \cong p^{\ast} J_{-}
\]

where $\op_n^{+} q + \op_n g_0$ induces an isomorphism $\beta \colon \tilde{W} \to \tilde{V}$. 
There is a Green symbol $g_2$ so that the isomorphism $\beta$ is induced by $-\op_n g_2$ and that $\op_n g_2$ vanishes 
on the complement of $\tilde{W}$ in $p^{\ast} E \otimes \SV$. 
Then $g_1 = g_0 + g_2$ is a Green symbol with the desired property. 

% .. Fh families etc.
\emph{III)} The inverse of $a_{\partial} = \sigma_{\partial}(A)$ can be calculated separately for each $\varrho \in S^{\ast} \A_{\partial}$ (cf.
2.1.2.4, Prop. 6, p. 110).
What is left to show is the smoothness of the inverse symbol depending on $\varrho \in S^{\ast} \A_{\partial}$. 
For $b_{\partial}$ the smoothness is clear. Additionally, the composition of smooth symbols yields smooth symbols.
Thus $c_{\partial} = a_{\partial} b_{\partial}$ is in $C^{\infty}(S^{\ast} \A_{\partial})$. 
Hence if we can show that $c_{\partial}^{-1}$ is $C^{\infty}$ we obtain that $a_{\partial}^{-1} = b_{\partial} c_{\partial}^{-1}$ is $C^{\infty}$. 
This is a lenghy but elementary calculation for which we refer to Prop. 6, p. 204-206 in \cite{RS}.

The parametrix is obtained as follows.

\emph{i)} From the exact sequence given in Theorem \ref{Thm:exact} there is a $B \in \Bc_{\V}^{0,0}(X, Y)$ such that
\[
(\sigma \oplus \sigma_{\partial})(B) = (\sigma \oplus \sigma_{\partial})(A)^{-1}. 
\]

Then by the multiplicativity of the principal symbol and the principal boundary symbol it follows that
\[
(\sigma \oplus \sigma_{\partial})(I - AB) = 0.
\]

Applying the exact sequence once more we have that $R := I - AB \in \Bc_{\V}^{-1,0}(X, Y)$. 
Hence $B$ is a right parametrix of $A$ of order $1$. 
Setting $B_k = B(I + R + \cdots + R^{k-1})$ we obtain 
\[
A B_k = (I - R) (I + R + \cdots + R^{k-1}) = I - R^k
\]

with $R^k \in \Bc_{\V}^{-k,0}(X, Y)$. 
Thus $B_k$ is a right parametrix of $A$ of order $k$. 

\emph{ii)} Let $(B_k)_{k \in \Nn_0}$ be a sequence of right parametrices of $A$ of orders $k \in \Nn_0$. 
By asymptotic completeness we can find $B \sim \sum_{i} B_i$ such that $AB - I \in \B_{\V}^{-\infty, 0}(X, Y)$. 
Hence we have found a right parametrix up to residual terms.

\emph{iii)} Fix a right parametrix up to residual terms $B_1$ of $A$. Analogously to \emph{i)} and \emph{ii)} we can find a left parametrix $B_2$ of $A$ such that
\[
I - A B_1 = R_1 \in \Bc_{\V}^{-\infty, 0}(X, Y) \ \text{and} \ I - B_2 A = R_2 \in \Bc_{\V}^{-\infty, 0}(X, Y). 
\]

Rewrite the operator $A B_2 A B_1$ as follows
\[
A B_2 A B_1 = A B_2 (I - R_1) = A (I - R_2) B_1 = A B_1 - A R_2 B_1 = (I - R_1) - A R_2 B_1
\]

hence $A B_2 A B_1 + A R_2 B_1 = I - R_1$ so that 
\[
(A B_2 A + A R_2) B_1 = I - R_1.
\]

The left hand side of the previous equation equals 
\[
A B_2 (I - R_1) + A R_2 B_1
\]

and note that this is $\equiv \ A B_2  \mod  \Bc_{\V}^{-\infty, 0}(X, Y)$.

Hence $B_2$ is also a right parametrix equal up to residual terms to $I - R_1$. 
Analogously one proves that $B_1$ is also a left parametrix equal up to residual terms to $I - R_2$. 
It follows that $B_1 \equiv B_2  \mod  \Bc_{\V}^{-\infty, 0}(X, Y)$ and therefore there is an operator $B \in \Bc_{\V}^{0,0}(X, Y)$ equal to $B_1 \mod \Bc_{\V}^{-\infty,0}(X, Y)$ and $B_2 \mod \Bc_{\V}^{-\infty}(X, Y)$ such that
\[
I - AB \in \Bc_{\V}^{-\infty, 0}(X, Y), \ \text{and} \ I - BA \in \Bc_{\V}^{-\infty, 0}(X, Y). 
\]

\emph{b)} Consider the pullback algebra $\Sigma_{BM}$ which is defined as the restricted direct sum of $\Sigma_{\V}$ and the indicial algebras $\oplus_{F \in \F_1(X)} \B_{\V(F)}^{0,0}(F, F_{reg})$.
We can summarize the situation in the following pullback diagram.

\[
\xymatrix{
\Sigma_{BM} \ar@{->>}[d(1.8)]_{\pi_2} \ar@{->>}[r(1.7)]^-{\pi_1} & &  \overline{\Sigma}_{\V}^{0,0} \ar@{->>} [d(1.8)]^{(r_F)_{F \in \F_1(M)}} & & \\
& \ar@{->>}[ul]_{\sigma \oplus \sigma_{\partial} (\oplus_{F} \R_F)} \ar@{->>}[dl]_{\oplus_{F} \R_F} \overline{\B}_{\V}^{0,0} \ar@{->>}[ur]^{\sigma \oplus \sigma_{\partial}} \ar@{->>}[dr]_{} & \\
\oplus_{F \in \F_1(X)} \overline{\B}_{\V(F)}^{0,0}(F, F_{reg}) \ar@{->>}[r(1.4)]^-{\oplus_{F \in \F_1(X)} \sigma_F \oplus \sigma_{\partial|F}} & &  \oplus_{F \in \F_1(X)} \overline{\Sigma}_{\V(F)}^{0,0}(F, F_{reg}) & &
}
\]

We will generalize the Fredholm conditions given in the special case of the pseudodifferential operators on Lie manifolds, see e.g. \cite{N}.

\emph{Claim:} The following sequence is exact
\[
\xymatrix{
\K(L_{\V}^2(X)) \oplus \K(L_{\W}^2(Y)) \ar@{>->}[r] & \B_{\V}^{0,0}(X, Y) \ar@{->>}[r(2.0)]^-{\sigma \oplus \sigma_{\partial} (\oplus_F \R_F)} & & & \Sigma_{BM}.
}
\]

The surjectivity is immediate. Let $A \in \B_{\V}^{0,0}(X, Y)$, then we prove that
\[
A \colon \begin{matrix} L_{\V}^2(X) \\ \oplus \\ L_{\W}^2(Y) \end{matrix} \to \begin{matrix} L_{\V}^2(X) \\ \oplus \\ L_{\W}^2(Y) \end{matrix}
\]

is compact if and only if $\sigma(A) = 0, \ \sigma_{\partial}(A) = 0$ and $\R_F(A) = 0$ for each $F \in \F_1(X)$. 

Assume that $\sigma \oplus \sigma_{\partial} \oplus (\oplus_F \R_F)(A) = 0$. 
From the short exact sequence in Theorem \ref{Thm:exact} it follows that $A \in \B_{\V}^{-1,0}(X, Y)$.
Let $(\rho_F)_{F \in \F_1(X)}$ be the collection of boundary defining functions of the faces at infinity of $X$.
Setting
\[
\rho := \prod_F \rho_F
\]

we have that $A = \rho B$ for some $B \in B_{\V}^{-1,0}(X, Y)$.
Hence 
\[
A \colon \begin{matrix} L_{\V}^2(X) \\ \oplus \\ L_{\W}^2(Y) \end{matrix} \to \begin{matrix} \rho H_{\V}^1(X) \\ \oplus \\ H_{\W}^1(Y) \end{matrix}
\]

is continuous.

By the generalized Kondrachov's theorem, see \cite[Theorem 3.6]{AIN}, we obtain that $\rho H_{\V}^1(X) \hookrightarrow L_{\V}^2(X)$
is compact. Also $H_{\W}^1(Y) \hookrightarrow L_{\W}^2(Y) = H_{\W}^{0}(Y)$ is compact. 
We obtain that
\[
A \colon \begin{matrix} L_{\V}^2(X) \\ \oplus \\ L_{\W}^2(Y) \end{matrix} \to \begin{matrix} L_{\V}^2(X) \\ \oplus \\ L_{\W}^2(Y) \end{matrix}
\]

is compact. 

For the other direction assume that $A$ is compact. Then $\sigma(A) = 0$ and $\sigma_{\partial}(A) = 0$.
Assume for contradiction that there is a face $F \in \F_1(X)$ such that $\R_F(A) \not= 0$ and fix such an $F$.
By the Hausdorff and amenability property of $\G$ we can identify bijectively $\B_{\V}^{0,0}(X, Y) \cong \B^{0,0}(\G^{+}, \X)$ via the vector
representation, see \cite[p.11]{N}. 
The restriction $\R_F$ is defined on the level of families by $(A_x)_{x \in X} \mapsto (A_y)_{y \in F}$ and by
the Hausdorff property of the groupoid $\G$ we have that $X \ni x \mapsto \left\|A_x \begin{pmatrix} \varphi_x \\ \psi_x \end{pmatrix}\right\|$
is a continuous function for each $\varphi \in C_c^{\infty}(\G^{+}), \ \psi \in C_c^{\infty}(\X)$. 
Let $x \in F$ and $\varphi \in C_c^{\infty}(\G_x^{+}), \ \psi \in C_c^{\infty}(\X_x)$ be given such that $A_x \begin{pmatrix} \varphi \\ \psi \end{pmatrix} \not= 0$
and let $\tilde{\varphi}, \ \tilde{\psi}$ be extensions to functions in $C_c^{\infty}(\G^{+})$ and $C_c^{\infty}(\X)$ respectively.
Then we have $\tilde{\varphi}_y = \varphi_{|\G_y}, \ \tilde{\psi}_y = \psi_{|\X_y}$ for $y \in X_0$.
These restrictions identify as functions $\tilde{\varphi}_y \in C_c^{\infty}(X_0)$ and $\tilde{\psi}_y \in C_c^{\infty}(Y_0)$ 
via the canonical isometries $\G_y^{+} \cong X_0, \ \X_y \cong Y_0$. 
The supports of $\tilde{\varphi}_y, \ \tilde{\psi}_y$ increase via $y \to x$ for $y \in X_0$. 
This implies weak convergence in Sobolev spaces $\tilde{\varphi}_y \to 0, \ \tilde{\psi}_y \to 0$. 
By continuity it follows
\[
A_y \begin{pmatrix} \varphi_y \\ \psi_y \end{pmatrix} \to A_x \begin{pmatrix} \varphi_x \\ \psi_x \end{pmatrix} \not= 0. 
\]

This yields a contradiction to the compactness of $A$. 
Hence the proof of the claim is finished.

Set $\H := L_{\V}^2(X) \oplus L_{\W}^2(Y)$ and denote by $\C(\H) := \L(\H) / \K(\H)$ the Calkin algebra with quotient
map $q_{\C} \colon \L(\H) \to \C(\H)$. 
We want to show that $A$ is elliptic if and only if $A$ is Fredholm on the Hilbert space $\H$.
By amenability property of the groupoid we know that the vector representation furnishes an injective $\ast$-representation (cf. \cite[Theorem 4.4]{LN})
\[
\pi \colon \B_{\V}^{0,0}(X, Y) \to \L(\H). 
\]

An operator $A \in \L(\H)$ is Fredholm if and only if $q_{\C}(A) \in \C(\H)$ is invertible.
Apply the standard exact sequence for the indicial symbol for order $-1$ (note that by invariance of each singular hyperface $F$ the indicial symbol is surjective)
\[
\xymatrix{
\ker \R_F^{-1} \ar@{>->}[r] & \B_{\V}^{-1,0}(X, Y) \ar@{->>}[r]^-{\R_F^{-1}} & \B_{\V}^{-1,0}(F, F_{reg}) =: \B_{F}^{-1,0}.
}
\]

Consider the diagram
\[
\xymatrix{
\oplus_{F} \ker \R_F^{-1} \ar@{>->}[dr] \ar@{>->}[d] \ar@{==}[r] & \K(\H) \ar@{>->}[d] \ar@{>->}[r] & \L(\H) \ar@{->>}[r]^{q_{\C}} & \C(\H) \\
\B_{\V}^{-1,0} \ar@{->>}[d] \ar@{>->}[r] & \B_{\V}^{0,0} \ar@{->>}[d]^-{\sigma \oplus \sigma_{\partial} \oplus (\oplus_F \R_F)} \ar@{>->}[ur]^{\pi} \ar@{->>}[r]^{\sigma \oplus \sigma_{\partial}} & \Sigma_{\V} & \\
\oplus_F \B_{F}^{-1,0} & \Sigma_{BM} & &
}
\]
One can check the equality ($A$ being SL-elliptic, see also \cite[Theorem 9]{LN})
\[
\sigma_{ess}(\pi(A)) = \bigcup_{F \in \F_1(X)} \sigma(\R_F(A)) \cup \bigcup_{\xi \in S^{\ast} \A} \mathrm{spec}(\sigma(A)(\xi)) \cup \bigcup_{\xi' \in S^{\ast} \A_{\partial}} \mathrm{spec}(\sigma_{\partial}(A)(\xi'))
\]

where $\sigma_{ess}$ denotes the essential spectrum and $\mathrm{spec}(\sigma_{\partial}(A)(\xi'))$ denotes the spectrum of the matrix defined by the operator
valued symbol $\sigma_{\partial}(A)$ at the point $\xi'$. 

Hence $\pi$ induces an injective $\ast$-homomorphism
\[
\B_{\V}^{0,0}(X, Y) / \K(\H) \to \C(\H).
\]

Thus $q_{C}(A)$ is invertible in $\C(\H)$ if and only if $(\R_F \oplus \sigma \oplus \sigma_{\partial})(A)$ is
pointwise invertible for each $F \in \F_1(X)$. \qed
\end{Proof}

\small

\end{document}